\newtheorem{thm}{Theorem}[section]
\newtheorem{prop}[thm]{Proposition}
\newtheorem{lemma}[thm]{Lemma}
\newtheorem{cor}[thm]{Corollary}
\theoremstyle{remark}
\newtheorem{rem}[thm]{Remark}
\newtheorem{exemple}[thm]{Example}
\newtheorem{paragr}[thm]{}
\theoremstyle{definition}
\newtheorem{definition}[thm]{Definition}
\newcommand{\pref}[1]{{\widehat{ #1 }}}
\newcommand{\To}{\longrightarrow}
\newcommand{\cats}{\Delta}
\newcommand{\simpl}{\pref{\cats}}
\newcommand{\Hom}{\operatorname{\mathrm{Hom}}}
\newcommand{\sHom}{\operatorname{\mathit{Hom}}}
\newcommand{\nerf}{\operatorname{N}}
\newcommand{\op}[1]{{#1}^{\mathit{op}}}
\newcommand{\real}[1]{\operatorname{{\mathit{Real}}^{}_{{#1}}}}
\newcommand{\sing}[1]{\operatorname{{\mathit{Sing}}^{}_{{#1}}}}
\newcommand{\C}{\mathcal{C}}
\newcommand{\smp}[1]{ \varDelta_{#1}}
\newcommand{\e}{\varepsilon}
\newcommand{\bord}{\operatorname{\partial}}
\def\TO#1{\mathrel{\hbox to #1mm{\rightarrowfill}}}
\def\OT#1{\mathrel{\hbox to #1mm{\leftarrowfill}}}
\def\Cube{\mathord{\hskip 1pt\rlap{\vrule height 7.4pt depth 0pt width .8pt
\vrule height 1.2pt depth 0pt width 5.4pt
\vrule height 7.4pt depth 0pt width 1.2pt}
\rlap{\hskip 1.6pt\vrule height 7.4pt depth 0pt width .8pt}
\vrule height 7.4pt depth -6.6pt width 7.4pt\hskip 1pt}}
\def\cube{\mathord{\hskip 1pt\rlap{\vrule height 7.4pt depth 0pt width .4pt
\vrule height .8pt depth 0pt width 6.2pt
\vrule height 7.4pt depth 0pt width .8pt}
\rlap{\hskip 1.pt\vrule height 7.4pt depth 0pt width .4pt}
\vrule height 7.4pt depth -7pt width 7.4pt\hskip 1pt}}
\def\ccornet{\mathord{\hskip 1pt\rlap{\vrule height 7.4pt depth 0pt width .4pt
\vrule height .8pt depth 0pt width .8pt
\vrule height 0pt depth 0pt width 5.4pt 
\vrule height 7.4pt depth 0pt width .8pt}
\rlap{\hskip 1.pt\vrule height 7.4pt depth 0pt width .4pt}
\vrule height 7.4pt depth -7pt width 7.4pt\hskip 1pt}}
\renewcommand{\Cube}{{\Box}}
\renewcommand{\cube}{\Box}
\renewcommand{\ccornet}{\sqcap}
\newcommand{\spref}[1]{s\pref{#1}}
\newcommand{\Eq}{\mathit{Eq}}
\numberwithin{equation}{thm}
\title[Univalent universes
for elegant models of homotopy types]{Univalent universes\\
for elegant models of homotopy types}
\author[D.-C. Cisinski]{Denis-Charles Cisinski}
\address{Universit\'e Paul Sabatier\\
Institut de Math\'ematiques de Toulouse\\
118~route de Narbonne\\
31062~Toulouse cedex~9\\France}
\email{denis-charles.cisinski@math.univ-toulouse.fr}
\urladdr{http://www.math.univ-toulouse.fr/~dcisinsk/}
\begin{document}
\frontmatter
\begin{abstract}
We construct a univalent universe
in the sense of Voevodsky in some suitable model categories
for homotopy types (obtained from Grothendieck's theory of
test categories). In practice, this means for instance that, appart from the
homotopy theory of simplicial sets, intensional type theory
with the univalent axiom can be interpreted in the
homotopy theory of cubical sets (with connections or not), or of
Joyal's cellular sets.
\end{abstract}
\maketitle

We recall that any right
proper cofibrantly generated model category structure on a (pre)sheaf topos
whose cofibrations are exactly the monomorphims is a type theoretic model
category in the sense of Shulman \cite[Definition 2.12]{ShInv}. This means that,
up to coherence issues which are solved by
Kapulkin, Lumsdaine, and Voevodsky~\cite{KLV} and by Lumsdaine and Warren~\cite{LuWa},
we can interpret Martin-L\"of intensional type theory in such a model category.
The purpose of these notes is to prove the existence of univalent universes
in suitable model categories for (local systems of) homotopy types, (such as simplicial sets
or cubical sets): presheaves over a local test category in the sense
of Grothendieck which is also elegant in the sense of Bergner and Rezk.
We give two constructions. The first one uses Voevodsky's construction
in the setting of simplicial sets as well as Shulman's extension to simplicial presheaves
over elegant Reedy categories. This has the advantage of giving a rather short
proof, but the disadvantage of giving a non-explicit construction.
The second one consists to develop the theory of
minimal fibrations in the context of presheaves over an Eilenberg-Zilber Reedy category
(which is a slightly more restrictive notion than the one of elegant Reedy category),
following classical approaches (as in \cite{GZ} for instance), and
then to check how robust is Voevodsky's proof. The latter point of view is much
more general and also gives another proof of Shulman's construction of univalent universes,
at least in the case of simplicial presheaves on Eilenberg-Zilber Reedy categories.

The preparation of these notes started after discussions I had with Thierry Coquand about
his joint work with Marc Bezem and Simon Huber on cubical sets \cite{BCH}.
His kind invitation to give a talk at the Institut Henri Poincar\'e
in Paris gave the decisive impulse to turn these into actual mathematics.

\tableofcontents

\section{First construction: reduction to the case of simplicial sets}

We write $\cats$ for the category of simplices (the full subcategory
of the category of small category whose objects are the non-empty
finite totally ordered sets). For an integer $n\geq 0$, we write $\smp n$
for the presheaf represented by the totally ordered set $\{0,\ldots,n\}$.
If $A$ is a small category, we write $\pref{A}$ for the category of presheaves
of sets over $A$, and $\spref A=\pref{A\times\cats}$ for the category of simplicial presheaves
over $A$. We will consider $\pref{A}$ as a full subcategory of $\spref{A}$
(by considering sets as constant presehaves on $\cats$).
Given a cardinal $\kappa$, a morphism of presheaves $X\to Y$ over
a small category $A$ will be said to have $\kappa$-small fibers if, for any object $a$ of $A$
and any section of $Y$ over $a$, the fiber product $a\times_Y X$, seen
as a presheaf over $A/a$, is $\kappa$-accessible.

Let $A$ be an elegant Reedy category (see \cite[Definition 3.2]{BeRe}).
Let us consider the \emph{locally constant model structure}
on the category of simplicial presheaves over $A$, that is the left Bousfield
localization of the injective model structure on the category of simplicial
presheaves by maps of the form $f\times1_{\smp n}:a\times\smp n\to b\times\smp n$ for any
map $f:a\to b$ in $A$ and any integer $n\geq 0$.
The fibrant objects of the locally constant model structure are thus the
injectively fibrant simplicial presheaves $X$ such that, for any map $a\to b$ in $A$, the induced
morphism $X_b\to X_a$ is a simplicial weak homotopy equivalence.
Note that the locally constant model structure is right proper (this
follows from \cite[Theorem 4.4.30 and Corollary 6.4.27]{Ci3}, for instance),
and is thus a type theoretic model category.
We let $\kappa$ an inaccessible cardinal larger than the cardinal of the set of arrows of $A$.

\begin{prop}\label{prop:locallyconstantuniverse}
There exists a univalent universe $\pi:\overline{U}\to U$ in $\spref{A}$
which classifies fibrations with $\kappa$-small fibers
in the locally constant model structure.
\end{prop}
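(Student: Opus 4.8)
The plan is to reduce to Shulman's extension of Voevodsky's construction and then to transport the universe thus obtained across the Bousfield localization. Since $A$ is an elegant Reedy category, the Reedy and injective model structures on $\spref{A}$ agree, so Shulman's extension of the construction of \cite{KLV} provides a univalent universe $p\colon\overline{V}\to V$ in $\spref{A}$, for the injective model structure, which classifies the injective fibrations with $\kappa$-small fibers; here we use that $\kappa$ is inaccessible and larger than the cardinality of the set of arrows of $A$. The locally constant model structure has the same cofibrations as the injective one, namely the monomorphisms, hence at least as many trivial cofibrations; therefore every locally constant fibration is an injective fibration, and in particular every locally constant fibration with $\kappa$-small fibers is, as a morphism of $\spref{A}$, a pullback of $p$.

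It thus suffices to cut out of $V$ the sub-universe classifying precisely the locally constant fibrations. Let $U\subseteq V$ be the subpresheaf characterised by the condition that a morphism $f\colon X\to V$ factors through $U$ if and only if the pullback $f^{*}\overline{V}\to X$ is a locally constant fibration. This is well defined: locally constant fibrations form the right class of a weak factorisation system, hence are stable under base change, and being such a fibration is a condition that may be checked locally over the base, as it is cut out by the right lifting property against a set of monomorphisms between $\kappa$-small presheaves together with the injective fibration condition, which $p$ satisfies universally. Put $\overline{U}=U\times_{V}\overline{V}$ and let $\pi\colon\overline{U}\to U$ be the projection; the universal instance of the defining condition shows that $\pi$ is itself a locally constant fibration, still with $\kappa$-small fibers, and by the previous paragraph every locally constant fibration with $\kappa$-small fibers is a pullback of $\pi$. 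Moreover, since $V$ is a Hofmann--Streicher style universe, morphisms $X\to U$ correspond bijectively to locally constant fibrations with $\kappa$-small fibers over $X$.

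The object $U$ is in fact fibrant in the locally constant model structure: under the correspondence just mentioned, a lifting problem for $U$ against a trivial cofibration $K\rightarrowtail L$ is the problem of extending a locally constant fibration with $\kappa$-small fibers over $K$ to one over $L$, and such extensions exist by the fibration extension property, which holds in these model structures (see \cite{Ci3}) and which preserves $\kappa$-smallness because $\kappa$ is inaccessible. Thus $\pi\colon\overline{U}\to U$ is a $\kappa$-small locally constant fibration, with $U$ fibrant, classifying all $\kappa$-small locally constant fibrations; as usual this has to be combined with the coherence constructions of \cite{KLV} and \cite{LuWa}.

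It remains to establish univalence, which is where I expect the genuine work to lie. Form the object $\Eq(\overline{U})\to U\times U$ of fibrewise equivalences between the two pullbacks of $\pi$ along the projections, together with the canonical ``identity'' section $s\colon U\to\Eq(\overline{U})$ lying over the diagonal; univalence of $\pi$ is the assertion that $s$ is a weak equivalence. Since $U$ is fibrant and $\Eq(\overline{U})$ is obtained from $U$ by finite limits, internal homs and path objects of fibrant objects of a type theoretic model category, $\Eq(\overline{U})$ is fibrant as well; and a morphism between objects fibrant in a left Bousfield localization is a weak equivalence there if and only if it is a weak equivalence in the ambient model structure. Hence it suffices to show that $s$ is an injective weak equivalence. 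The formation of $\Eq$ is stable under base change along the monomorphism $U\hookrightarrow V$, so $s$ fits into a cartesian square over the section $s_{V}\colon V\to\Eq(\overline{V})$, which is an injective weak equivalence since $p$ is univalent; transporting the weak equivalence this way needs care, however, because the comparison morphism $\Eq(\overline{U})\to\Eq(\overline{V})$ is only a monomorphism. The cleaner route is to re-run the relevant part of Voevodsky's argument directly for the class of $\kappa$-small locally constant fibrations, deducing univalence of $\pi$ from the equivalence extension property for that class together with the fibrancy of $U$. Making this precise --- that is, verifying that restricting the universal fibration to the sub-universe of locally constant fibrations does not destroy univalence --- is the main obstacle; the remaining verifications are routine.
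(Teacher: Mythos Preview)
Your approach is genuinely different from the paper's, and the gap you flag at the end is not the only one. The paper does not carve a sub-universe out of $V$. Instead it first invokes the fact that the $(\infty,1)$-category presented by the locally constant model structure is an $\infty$-topos (functors from the $\infty$-groupoid of $A$ to spaces), so by Rezk's theorem \cite[Theorem~1.6.6.8]{Lurie} together with \cite[Proposition~6.10]{GeKo} there is a univalent fibration $q:\overline{W}\to W$ between fibrant objects classifying locally constant fibrations \emph{up to homotopy pullback}; univalence is thus imported for free from higher topos theory. One then classifies $q$ strictly by a map $w:W\to V$, factors $w$ as a trivial cofibration followed by an injective fibration $u:U\to V$, and sets $\overline{U}=U\times_V\overline{V}$. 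The remaining work is to upgrade ``homotopy pullback of $\pi$'' to ``strict pullback of $\pi$'': a locally constant fibration $f$ is a strict pullback of $p$ via some $y:Y\to V$ and a homotopy pullback of $\pi$ via some $\lambda:Y\to U$; univalence of $p$ makes $y$ homotopic to $u\lambda$, and lifting that homotopy along the injective fibration $u$ yields $y':Y\to U$ with $uy'=y$, exhibiting $f$ as a strict pullback of $\pi$.

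Your direct construction stumbles earlier than you indicate. The assertion that ``being such a fibration is a condition that may be checked locally over the base, as it is cut out by the right lifting property against a set of monomorphisms'' is unjustified: having the right lifting property against a fixed set of maps is \emph{not} in general testable after pullback to representables. This locality of fibrations is exactly what the paper later isolates as \emph{strong properness}, and it is proved only for the Grothendieck model structure on $\pref{A}$ with $A$ an elegant local test category (Theorem~\ref{thm:testfibrationlocal}), never for the locally constant model structure on $\spref{A}$. Without it, $U$ is still a subpresheaf (pullback stability suffices for that), but your ``universal instance'' argument does not show that $\pi$ is a locally constant fibration; it only shows that every pullback of $\pi$ to a representable is one. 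The fibration extension property you invoke for fibrancy of $U$ is likewise not a general fact from \cite{Ci3}; in the paper it is obtained (Proposition~\ref{prop:extensionfib}) via minimal fibrations, which needs $A$ to be Eilenberg--Zilber. Finally, for univalence your pullback-square argument cannot be completed (pulling back a section of a fibration along a monomorphism of bases need not preserve being a weak equivalence), and your fallback to the equivalence extension property is precisely the content of the paper's second construction in Section~3, again requiring the Eilenberg--Zilber hypothesis and the theory of minimal fibrations. In effect your outline is a sketch of Section~3 transplanted to $\spref{A}$ without its machinery; the whole point of Proposition~\ref{prop:locallyconstantuniverse} is to bypass that by borrowing univalence from $\infty$-topos theory and using Shulman's injective universe only for strictification.
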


\begin{proof}
Let $p:\overline{V}\to V$ a univalent universe in $\spref A$ with respect to
the injective model structure, classifying fibrations with $\kappa$-small fibers
(this exists by virtue of a result of Shulman; see \cite[Theorem 5.6]{Sh}).
Then, as the $(\infty,1)$-category associated to the
locally constant model structure is an $\infty$-topos (because it corresponds to the
$(\infty,1)$-category of functors from the $\infty$-groupoid of $A$ to the $(\infty,1)$-category
of $\infty$-groupoids), one can apply a general result
of Rezk \cite[Theorem 1.6.6.8]{Lurie}, and get
the existence of a univalent universe up to homotopy with respect to the
locally constant model structure; see \cite[Proposition 6.10]{GeKo}.
In other words, one can find a univalent fibration between fibrant objects $q:\overline{W}\to W$
of the locally constant model structure such that, for any fibration
$f:X\to Y$ of the locally constant model structure, there exists an homotopy pullback square
in $\spref A$ of the following form.
$$\xymatrix{
X\ar[r]\ar[d]_f&\overline{W}\ar[d]^q\\
Y\ar[r]&W
}$$
Let us choose a (strict) pullback square
$$\xymatrix{
\overline{W}\ar[r]^{\overline{w}}\ar[d]_q&\overline{V}\ar[d]^p\\
W\ar[r]^w&V
}$$
and choose a factorization of the map $w$ into a trivial cofibration
$:W\to U$ followed by a fibration $u:U\to V$ (with respect to the
injective model structure). Then $U$ is fibrant for the locally constant model structure.
Moreover, if we put $\overline{U}=U\times_V\overline{V}$, then the map
$\overline{W}\to\overline{U}$ is a trivial cofibration of the injective model structure
(because the latter is right proper) and thus $\overline{U}$ is also fibrant
with respect to the locally constant model structure. The projection
$\pi:\overline{U}\to U$ being a fibration of the injective model structure
between fibrant objects of the locally constant model structure, it is a fibration
for the locally constant model structure. By construction, the projection $\pi$ is
a univalent universe up to homomotopy. It remains to prove that any fibration
of the locally constant model structure can be obtained as a strict pullback of $\pi$.
Let $f:X\to Y$ be a fibration of the locally constant model structure
(with $\kappa$-small fibers).
Then the map $f$ can be obtained as a strict pullback of the universe $p$.
$$\xymatrix{
X\ar[r]^{x}\ar[d]_f&\overline{V}\ar[d]^p\\
Y\ar[r]^y&V
}$$
The classifying map $y:Y\to V$ can be lifted to $U$ up to simplicial homotopy.
Indeed, there exists an homotopy pullback square in the locally constant
model structure of the following form.
$$\xymatrix{
X\ar[r]^{\mu}\ar[d]_f&\overline{U}\ar[d]^\pi\\
Y\ar[r]^\lambda&U
}$$
This square is also an homotopy pullback square in the
injective model structure: the comparison map
$X\to Y\times_U\overline{U}$ is a weak equivalence between
fibrant and cofibrant objects over $Y$ with respect to the model
structure on $\spref A/Y$ induced by the locally constant
model structure, and is thus a simplicial homotopy
equivalence (in particular, a weak equivalence of the
injective model structure), and the map $\pi$ being a fibration
of the injective model structure (which is right proper), this proves our assertion.
Therefore, we obtain the homotopy pullback square below in the
injective model structure.
$$\xymatrix{
X\ar[r]\ar[d]_f&\overline{V}\ar[d]^p\\
Y\ar[r]^{u\lambda}&V
}$$
The fibration $p$ being univalent,
there exists a map $h:\smp 1\times Y\to V$
such that $h|_{\{0\}\times Y}=y$ and $h|_{\{1\}\times Y}=u\lambda$.
As the map $u$ is a fibration of the injective model structure,
the commutative square
$$\xymatrix{
\{1\}\times Y\ar[d]\ar[r]^(.6)\lambda&U\ar[d]^u\\
\smp 1\times Y\ar[r]^(.6)h&V
}$$
admits a lift $l:\smp 1\times Y\to U$. If we define $y':Y\to U$ as $y'=l|_{\{0\}\times Y}$,
then we have $uy'=h|_{\{0\}\times Y}=y$. In other words, we then obtain a strict pullback
square
$$\xymatrix{
X\ar[r]^{x'}\ar[d]_f&\overline{U}\ar[d]^\pi\\
Y\ar[r]^{y'}&U
}$$
which shows that $\pi$ is a univalent universe for the locally constant model structure
in the strict sense.
\end{proof}

\begin{paragr}\label{par:test}
Assume now that $A$ is a local test category in the sense of Grothendieck;
see \cite[Definition 1.5.2 and Theorem 1.5.6]{Ma}.
Then the category $\pref A$ is endowed with a proper combinatorial model structure
with the monomorphisms as cofibrations, while the weak equivalences are the maps
$X\to Y$ which induce a simplicial weak homotopy equivalence $\nerf A/X\to \nerf A/Y$
(where $A/X$ denotes the category of elements of the presheaf $X$, while $\nerf$
is the nerve functor); see \cite[Corollary 4.2.18 and Theorem 4.4.30]{Ci3}.
We will refer to this model structure (which is in particular type theoretic)
as the \emph{Grothendieck model structure}.
Moreover, the inclusion functor $\pref{A}\to\spref A$ is then
a left Quillen equivalence with the locally constant model structure,
with right adjoint the evaluation at zero functor. If, moreover, the category $A$ is elegant,
the universe $\pi:\overline{U}\to U$ obtained in Proposition \ref{prop:locallyconstantuniverse}
induces a fibration between fibrant objects $\pi_0:\overline{U}_0\to U_0$ in $\pref{A}$.
\end{paragr}

\begin{thm}\label{thm:testuniverse}
The fibration $\pi_0:\overline{U}_0\to U_0$ is a univalent universe in $\pref{A}$
which classifies fibrations with $\kappa$-small fibers.
\end{thm}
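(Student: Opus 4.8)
The plan is to transport the conclusion of Proposition~\ref{prop:locallyconstantuniverse} along the Quillen equivalence $(i,(-)_0)$, using repeatedly that $(-)_0$ preserves pullbacks and that $(-)_0\circ i=\mathrm{id}$. By \ref{par:test}, $\pi_0$ is already a fibration between fibrant objects of the Grothendieck model structure, so the statement makes sense. Since $i$ and $(-)_0$ induce inverse equivalences between the $(\infty,1)$-categories associated with the Grothendieck model structure on $\pref{A}$ and the locally constant model structure on $\spref{A}$, the homotopy-invariant part of the assertion transports at once: $\pi_0$ is a univalent fibration, and any Grothendieck fibration $f\colon X\to Y$ with $\kappa$-small fibers fits into a homotopy pullback square with $\pi_0$. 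Concretely, one factors the composite $i(X)\to i(Y)\to\widehat{i(Y)}$ (with $\widehat{i(Y)}$ a locally constant fibrant replacement of $i(Y)$) as a locally constant trivial cofibration followed by a locally constant fibration $\widehat Z\to\widehat{i(Y)}$ between fibrant objects, still with $\kappa$-small fibers; by Proposition~\ref{prop:locallyconstantuniverse} this is a strict pullback of $\pi$; and applying $(-)_0$, which as a right Quillen functor preserves the weak equivalences between fibrant objects occurring here (in particular $(-)_0\widehat{i(Y)}\simeq Y$), yields a homotopy pullback square with $\pi_0$.

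The real content is the upgrade to a \emph{strict} pullback. Here the key observation is that $i$ preserves injective fibrations: a constant simplicial presheaf on a presheaf of sets is injectively fibrant, and a lifting problem for $i(f)$ against an injective trivial cofibration $C\to D$ is solved because a map from $C$ to such a constant simplicial presheaf factors through the objectwise set of connected components of $C$, which the (injective, hence objectwise) weak equivalence $C\to D$ leaves unchanged. Thus, if $f\colon X\to Y$ is a Grothendieck fibration with $\kappa$-small fibers, then $i(f)$ is an injective fibration with $\kappa$-small fibers, hence — via the universe $p\colon\overline{V}\to V$ for the injective model structure used in the proof of Proposition~\ref{prop:locallyconstantuniverse} — a strict pullback of $p$ along some $c\colon i(Y)\to V$. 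Applying $(-)_0$ exhibits $f$ as a strict pullback of $p_0:=(-)_0p$ along $c_0:=(-)_0c$. One then repeats the final part of the proof of Proposition~\ref{prop:locallyconstantuniverse} inside $\pref{A}$, with $p_0$, $u_0:=(-)_0u$ and $\pi_0$ in the roles of $p$, $u$ and $\pi$ (note $\overline U_0=U_0\times_{V_0}\overline V_0$): the homotopy pullback square with $\pi_0$ provides a second classifying map $u_0\lambda$ for $f$ relative to $p_0$; univalence of $p_0$ makes $c_0$ homotopic to $u_0\lambda$; lifting this homotopy along the fibration $u_0$ produces $\lambda'$ with $u_0\lambda'=c_0$ on the nose; and then $f=Y\times_{V_0}\overline V_0=Y\times_{U_0,\lambda'}\overline U_0$ is a strict pullback of $\pi_0$. (Univalence of $\pi_0$ also follows from that of $p_0$ together with this strict classification, but is already obtained by transport above.)

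I expect the main obstacle to be precisely the last step of the previous paragraph: checking that the data attached to the \emph{injective} model structure on $\spref{A}$ descend under $(-)_0$ to the correct data for the \emph{Grothendieck} model structure on $\pref{A}$ — namely that $u_0$ is again a fibration and that $p_0$ is still univalent. This is not formal, because $(-)_0$ is right Quillen only for the locally constant structure, and there is no Quillen relation between the Grothendieck structure on $\pref{A}$ and the injective structure on $\spref{A}$; one has to exploit that $u$ is an injective fibration whose source $U$ is locally constant fibrant, and that $p$ may be chosen so that its univalence survives restriction to locally constant fibrations, so that $(-)_0$ only ever sees fibrations and weak equivalences of the locally constant structure. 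Throughout, the failure of $(-)_0$ to preserve arbitrary weak equivalences — it preserves only those between fibrant objects — must be circumvented exactly as in the homotopy pullback argument above, by threading every construction through locally constant fibrant replacements.
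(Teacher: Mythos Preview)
Your approach diverges from the paper's at the crucial point, and the gap you yourself flag is real and not closed by what you write.

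The paper does \emph{not} transport along the adjunction $(i,(-)_0)$. Instead it introduces a second Quillen equivalence going the other way: a normalized cosimplicial resolution $D\colon\cats\to\pref{A}$ yields a left Quillen equivalence $\real D\colon\spref A\to\pref A$ whose right adjoint $\sing D$ satisfies $\sing D(X)_n=\sHom(D_n,X)$ and in particular $\sing D(X)_0=X$ (because $D_0$ is terminal). The point is that $\sing D$ is a \emph{right} Quillen functor from the Grothendieck structure to the locally constant structure, so a Grothendieck fibration $f$ with $\kappa$-small fibers is sent to a locally constant fibration $\sing D(f)$ with $\kappa$-small fibers, which is then a strict pullback of $\pi$ by Proposition~\ref{prop:locallyconstantuniverse}; evaluating at $0$ gives back $f$ on the nose as a strict pullback of $\pi_0$. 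No passage through the injective universe $p$ is needed, and the whole argument stays inside the locally constant structure where $(-)_0$ is right Quillen.

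Your route, by contrast, forces you through the injective structure because $i$ is only \emph{left} Quillen. Your observation that $i(f)$ is an injective fibration is correct (and in fact holds for arbitrary $f$), but it buys you only a strict pullback of $p\colon\overline V\to V$, and then you must argue in $\pref A$ using $p_0$ and $u_0$. Here the argument breaks: $V$ is injectively fibrant but not locally constant fibrant, so $u\colon U\to V$ is not a fibration between locally constant fibrant objects, and there is no reason for $u_0$ to be a Grothendieck fibration --- which you need to lift the homotopy. Likewise, univalence of $p$ in the injective structure is a statement about injective weak equivalences between fibers, and does not descend to univalence of $p_0$ in the Grothendieck structure, where the weak equivalences are entirely different. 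Your suggestion to ``choose $p$ so that its univalence survives restriction'' and to ``exploit that $U$ is locally constant fibrant'' does not produce an argument: the target $V$ remains outside the locally constant world, and $(-)_0$ cannot be controlled on maps whose codomain is not locally constant fibrant. One could try to push the whole homotopy-lifting step back up to $\spref A$ and lift along $u$ there, but then one needs an \emph{injective} homotopy pullback square of $i(f)$ against $\pi$ with $i(f)$ appearing strictly, and $i(Y)$ is not locally constant fibrant, so the comparison-of-homotopy-pullbacks trick from the proof of Proposition~\ref{prop:locallyconstantuniverse} does not apply. The paper's use of $\sing D$ sidesteps all of this.
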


\begin{proof}
Remark that the category $\cats$ is an example of a (local) test category and that the
Grothendieck model structure on $\simpl$ coincides with the usual model structure
(whose fibrant objects are the Kan complexes).
Let $D:\cats\to\pref{A}$ be a normalized cosimplicial resolution in the sense of \cite[Definition 2.3.12]{Ci3}
(this always exists: see \cite[2.3.13]{Ci3} for a canonical example).
Note that, in particular, $D_0$ is the terminal presheaf.
By virtue of \cite[Proposition 2.3.27, Corollaries 4.4.10 and 6.4.27]{Ci3}, we then have a left Quillen
equivalence from the locally constant model structure to the Grothendieck model structure
$$\real D:\spref A\to\pref A$$
with right adjoint
$$\sing D:\pref A\to\spref A$$
given by the formula
$$\sing D (X)_n=\sHom(D_n,X)$$
(where $\sHom$ denotes the internal Hom of $\pref{A}$ with respect to the
cartesian product). Let $f:X\to Y$ be a fibration of $\pref A$
with $\kappa$-small fibers.
Then, the map $\sing D(f)$ being a fibration
of the locally constant model structure with $\kappa$-small fibers,
there exists a (strict) pullback square of the following form.
$$\xymatrix{
\sing D(X)\ar[r]\ar[d]_{\sing D(f)}&\overline{U}\ar[d]^\pi\\
\sing D(Y)\ar[r]&U
}$$
Evaluating at zero thus gives the (strict) pullback below.
$$\xymatrix{
X\ar[r]\ar[d]_f&\overline{U}_0\ar[d]^{\pi_0}\\
Y\ar[r]&U_0
}$$
As evaluating at zero is a right Quillen equivalence, the map $\pi_0$ is a univalent universe
up to homotopy (because the fibration $\pi$ has this property),
and what precedes thus proves that it is a univalent universe in the strict sense.
\end{proof}

\begin{cor}\label{cor:elegantunivalence}
For any elegant local test category $A$, the Grothendieck model structure
on the category of presheaves of sets $\pref A$
supports a model of intensional type theory with dependent sums and products,
identity types, and as many univalent universes as there are inaccessible cardinals
greater than the set of arrows of $A$.
\end{cor}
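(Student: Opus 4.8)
The plan is to assemble the corollary from Theorem~\ref{thm:testuniverse} together with the general dictionary between type theoretic model categories and models of intensional type theory that was recalled in the introduction. First I would invoke paragraph~\ref{par:test}: the Grothendieck model structure on $\pref{A}$ is proper, combinatorial, and has the monomorphisms as its cofibrations, hence is a type theoretic model category in Shulman's sense. By the coherence theorems of Kapulkin--Lumsdaine--Voevodsky and of Lumsdaine--Warren, any such model category carries a model of Martin-L\"of intensional type theory with $\Sigma$-types, $\Pi$-types and identity types; this step is purely formal and uses no property of $A$ beyond those just listed.

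Next I would fix an inaccessible cardinal $\kappa$ strictly larger than the cardinality of the set of arrows of $A$ and apply Theorem~\ref{thm:testuniverse}, which yields a fibration $\pi_0\colon\overline{U}_0\to U_0$ between fibrant objects of $\pref{A}$ that is a univalent universe in the strict sense and classifies precisely the fibrations with $\kappa$-small fibers. To upgrade this into a universe à la Tarski inside the type-theoretic model, one checks that the class of fibrations with $\kappa$-small fibers is closed under the operations used to interpret the type constructors: composition (for $\Sigma$-types), pushforward along fibrations (for $\Pi$-types), and the formation of path objects (for identity types). Each closure property is an immediate consequence of the inaccessibility of $\kappa$ together with the hypothesis that $\kappa$ dominates the size of $A$, so that the fiberwise presheaf-theoretic constructions involved---in particular the internal $\sHom$, whose values are built as limits indexed by arrows of $A$---remain $\kappa$-accessible. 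Hence $\pi_0$ determines a univalent universe closed under all the type constructors.

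Finally, to obtain \emph{as many} univalent universes as there are inaccessible cardinals greater than the cardinality of the set of arrows of $A$, I would simply repeat the previous step for every such $\kappa$. If $\kappa<\kappa'$ are two such cardinals, then every $\kappa$-small fibration is $\kappa'$-small and $U_0^{(\kappa)}$ is itself classified by $\pi_0^{(\kappa')}$ but not by $\pi_0^{(\kappa)}$, so the resulting universes are compatibly nested and pairwise distinct, forming a cumulative hierarchy indexed by the inaccessible cardinals in question. The only genuinely non-formal input is Theorem~\ref{thm:testuniverse} itself---the passage from a classification up to homotopy to an honest strict pullback classification, which is where the elegance of $A$ and the transfer of Voevodsky's and Shulman's universes along the Quillen equivalence $\real{D}$ are used; once that theorem is granted, the remaining content of the corollary is the bookkeeping of closure properties under inaccessibility and the appeal to the established coherence theorems.
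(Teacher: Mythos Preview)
Your proposal is correct and matches the paper's intended argument: the corollary is stated there without proof, as an immediate consequence of Theorem~\ref{thm:testuniverse} together with the general remarks in the introduction (that a right proper cofibrantly generated model structure on a presheaf topos with monomorphisms as cofibrations is type theoretic, and that the coherence results of \cite{KLV} and \cite{LuWa} then supply the interpretation of $\Sigma$, $\Pi$ and identity types). Your write-up simply makes explicit the closure-under-$\kappa$-smallness checks and the cumulative hierarchy for varying $\kappa$, which the paper leaves to the reader.
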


The most well known example of elegant local test categories is provided by the
category of simplices $\cats$, the Grothendieck model structure
on $\simpl$ being then the standard model structure of simplicial sets.
The preceding corollary is in this case Voevodsky's theorem that univalence
holds in simplicial sets; see \cite{KLV}. As the proof of this corollary
relies on Voevodsky's results, the interesting examples for us are of course
the other elegant (local) test categories. We give a few examples below.

\begin{exemple}\label{example:cube}
The category of cubical sets supports a Grothendieck model structure.
Indeed, cubical sets are presheaves on the category $\Cube$ which is
defined as follows. Let $\overline{\Cube}$ be the full subcategory of the category
of sets whose objects are the sets $\cube_n=\{0,1\}^n$, $n\geq 0$.
This is a (strict) symmetric monoidal category (with cartesian product
as tensor product). The category $\Cube$ is defined as the smallest
monoidal subcategory of $\overline{\Cube}$ generated by the maps
\begin{align*}
\delta^0&:\cube_0\to\cube_1\\
\delta^1&:\cube_0\to\cube_1\\
\sigma &:\cube_1\to\cube_0
\end{align*}
where $\delta^e$ is the constant map with value $e$.
For $n\geq 1$, $1\leq i\leq n$ and $e=0,1$, one defines a map
$$\delta^{i,e}_n=1_{\cube_{i-1}}\otimes\delta^e\otimes1_{\cube_{n-i}}:\cube_{n-1}\to\cube_n\, ,$$
and for $n\geq 0$ and $1\leq i\leq n$, a map
$$\sigma^i_n=1_{\cube_{i-1}}\otimes\sigma\otimes1_{\cube_{n+1-i}}:\cube_{n+1}\to\cube_n\, .$$
The category $\Cube$ is an elegant test category;
see \cite[Cor. 8.4.13, Prop. 8.1.24 and 8.4.17]{Ci3}.

Moreover, the Grothendieck model structure on $\pref\Cube$ admits the
following explicit description. We will consider the Yoneda embedding
as an inclusion, and thus write $\cube_n$ for the
presheaf represented by $\cube_n$ for each $n\geq 0$. The boundary of $\cube_n$
is defined as the union of the images of the maps $\delta^{i,e}_n$ for
$1\leq i\leq n$ and $e=0,1$, and will be denoted by $\bord\cube_n$.
We also define, for $1\leq i\leq n$ and $e=0,1$, the presheaf $\ccornet^{i,e}_n$
as the union in $\cube_n$ of the images of the maps $\delta^{j,\varepsilon}_n$ for
$(j,\varepsilon)\neq(i,e)$. Then the generating cofibrations (trivial cofibrations)
of the Grothendieck model structure on $\pref{\Cube}$ are the inclusions
$$\bord\cube_n\to\cube_n \, , \ n\geq 0 \quad
\text{($\ccornet^{i,e}_n\to\cube_n \, , \ n\geq 1\, , \ 1\leq i\leq n \, , \ e=0,1$, respectively.)}$$
In other words, the fibrations are precisely the cubical Kan fibrations; see \cite[Theorem 8.4.38]{Ci3}.
\end{exemple}

\begin{exemple}\label{example:cubical}
Cubical sets with connections also give an example of a category of presheaves
over an elegant test category. The category $\Cube^c$ is defined as the
smallest monoidal subcategory of $\overline{\Cube}$ generated by the
maps $\delta^0$, $\delta^1$ and $\sigma$ as above, as well as by the map
$$\gamma:\cube_2=\cube_1\otimes\cube_1\to\cube_1$$
given by $\gamma(x,y)=\mathrm{sup}\{x,y\}$.
Cubical sets with connections are presheaves of sets on the category $\Cube^c$.
The category $\Cube^c$ is an elegant test category for the same reasons as for $\Cube$
(applying \cite[Prop. 8.1.24 and 8.4.12]{Ci3} for instance). But in fact, as was proved
by Maltsiniotis \cite[Prop. 3.3]{Ma7}, it has a better property:
it is a strict test category, which means that the weak equivalences
of the Grothendieck model category structure on the category of cubical sets
with connections are closed under finite products (while this property is known to fail
for cubical sets without connections).
We define the inclusions
$$\bord\cube_n\to\cube_n\quad\text{and}\quad \ccornet^{i,e}_n\to\cube_n$$
in the same way as for cubical sets above. Note that the category
of cubical sets with connections has a natural (non symmetric) closed monoidal
structure induced by the monoidal structure on $\Cube^c$ (using
Day convolution). The following theorem is a rather direct consequence
of its analog for cubical sets, but we state and prove it explicitely
for the convenience of the reader and for future reference.
\end{exemple}

\begin{thm}
The Grothendieck model category structure on the category of
cubical sets with connections is a proper cofibrantly generated
monoidal model category with generating cofibrations
$$\bord \cube_n \To \cube_n \quad , \quad n \geq 0 \ , $$
and generating trivial cofibrations
$$\ccornet^{i,e}_n \To \cube_n \quad , \quad
n \geq 1 \ , \ 1 \leq i \leq n \ , \ e = 0,1 \ . $$
Furthermore, the class of weak equivalences is closed under
finite products.
\end{thm}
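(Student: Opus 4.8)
The plan is to deduce everything from the corresponding statements for cubical sets without connections (Example~\ref{example:cube}), transported along the obvious restriction–induction pair associated to the monoidal inclusion $i:\Cube\to\Cube^c$. First I would recall that $\Cube^c$ is an elegant local test category, so by the discussion of \ref{par:test} the category $\pref{\Cube^c}$ carries the Grothendieck model structure, which is proper and type theoretic; moreover it is combinatorial, hence cofibrantly generated, with the monomorphisms as cofibrations. The content to be verified is therefore: (a) that the displayed maps $\bord\cube_n\to\cube_n$ generate the monomorphisms as a saturated class; (b) that the displayed maps $\ccornet^{i,e}_n\to\cube_n$ generate the trivial cofibrations; (c) that the model structure is monoidal for the Day-convolution tensor product; and (d) that the weak equivalences are stable under finite products.

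For (a): since $\Cube^c$ is an Eilenberg--Zilber (indeed elegant) category, every monomorphism of presheaves is built by attaching nondegenerate cells, so the set $\{\bord\cube_n\to\cube_n\}_{n\ge 0}$ is a cellular model for the monomorphisms; this is the standard argument (cf.\ the analogous statement in Example~\ref{example:cube}, and \cite[Ch.~8]{Ci3}). For (b): one knows a priori that the Grothendieck model structure is cofibrantly generated by \emph{some} set of trivial cofibrations; the point is to identify that set with the saturation of $\{\ccornet^{i,e}_n\to\cube_n\}$. The inclusions $\ccornet^{i,e}_n\to\cube_n$ are anodyne (they are trivial cofibrations of the Grothendieck structure) because each is obtained from the cubical-set horn inclusions by applying the left Quillen functor $i_!:\pref{\Cube}\to\pref{\Cube^c}$ to $\ccornet^{i,e}_n\to\cube_n$ in $\pref{\Cube}$ — here one uses that $i_!$ sends $\cube_n$ to $\cube_n$ and, being a left adjoint preserving the representables and the monomorphisms, sends the cubical horns to the connection-cubical horns, and is left Quillen for the two Grothendieck structures since $i$ is a morphism of test categories. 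Conversely, a fibration having the right lifting property against all $\ccornet^{i,e}_n\to\cube_n$ is a naive/cubical Kan fibration, and one checks exactly as in \cite[Theorem~8.4.38]{Ci3} (or by Cisinski's general recognition criterion for cellular model categories, \cite[Ch.~1--3]{Ci3}) that such a fibration between fibrant objects is a Grothendieck fibration; a Bousfield–localization–free argument then upgrades this to: the class of maps with the RLP against the $\ccornet^{i,e}_n\to\cube_n$ coincides with the Grothendieck fibrations, whence by the small object argument the trivial cofibrations are generated by the $\ccornet^{i,e}_n\to\cube_n$.

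For (c), the monoidal structure: by Day convolution, $i:\Cube\to\Cube^c$ being strict monoidal, the box product on $\pref{\Cube^c}$ restricts compatibly, and the pushout-product of two generating cofibrations $\bord\cube_m\to\cube_m$ and $\bord\cube_n\to\cube_n$ is the monomorphism $\bord\cube_{m+n}\to\cube_{m+n}$ (the same combinatorial identity $\cube_m\otimes\cube_n=\cube_{m+n}$ that holds in $\Cube$, since the tensor is still given by addition of dimensions); this is a cofibration, so the pushout-product axiom for cofibrations holds. For the pushout-product of a cofibration with a trivial cofibration, one reduces to generators and uses that $\ccornet^{i,e}_n\to\cube_n$ boxed with $\bord\cube_m\to\cube_m$ is again an anodyne map — which follows formally once one knows, as in \cite[Ch.~8]{Ci3}, that the class of anodyne extensions for the cubical-with-connections structure is closed under box product with monomorphisms (a closure property of the "enriched" cellular model, verified by a direct cell-by-cell inspection of horns, again pulled back from the corresponding fact for $\Cube$ or proved directly). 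Finally (d), product-stability of weak equivalences, is precisely Maltsiniotis's theorem that $\Cube^c$ is a \emph{strict} test category \cite[Prop.~3.3]{Ma7}: for a strict test category the weak equivalences of the Grothendieck model structure are stable under finite products, so this step is a citation.

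The main obstacle is step (b), and more precisely the passage from "RLP against the displayed horns" to "Grothendieck fibration": one must show the $\ccornet^{i,e}_n\to\cube_n$ generate \emph{all} anodyne extensions, not merely a cofinal family. The cleanest route is to invoke Cisinski's machinery of cellular model categories \cite[Théorème 4.2.18]{Ci3}: the Grothendieck model structure on $\pref{\Cube^c}$ is by construction the one associated to the data of the cellular model $\{\bord\cube_n\to\cube_n\}$ together with a suitable functorial cylinder and a generating set of anodyne extensions, and one verifies that the set $\{\ccornet^{i,e}_n\to\cube_n\}$ satisfies Cisinski's axioms (An~1)–(An~3) for a \emph{completely generating} class of anodyne extensions relative to the standard cubical cylinder (the interval $\cube_1$), exactly as for $\pref{\Cube}$ in \cite[§8.4]{Ci3}, now with the extra connection degeneracy $\gamma$ available — which only makes the verification easier, as $\gamma$ gives additional retractions. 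Once this is in place, (a)–(d) follow, and the theorem is proved.
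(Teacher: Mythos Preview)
Your proposal is correct and follows essentially the same route as the paper: both use elegance of $\Cube^c$ for the cellular model, transport the horn inclusions along the left Kan extension of the monoidal inclusion $\Cube\to\Cube^c$ to see they are trivial cofibrations, invoke Cisinski's general machinery from \cite{Ci3} to identify the generating trivial cofibrations and the monoidal model structure, and cite Maltsiniotis's strict test category result for product-stability of weak equivalences. The paper is terser, pointing directly to \cite[Lemma~8.4.37, Lemma~8.2.17, Theorem~8.2.18]{Ci3} rather than spelling out the verification of the anodyne axioms, but the underlying argument is the same.
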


\begin{proof}
As the category $\Cube^c$ is elegant,
it is clear that the boundary inclusions $\bord \cube_n \To \cube_n$
generate the class of monomorphisms.
We have an obvious inclusion functor $u:\Cube\to\Cube^c$
which provides an adjunction
$$u_!:\pref{\Cube}\rightleftarrows{\pref{\Cube}}^c:u^*$$
where $u_!$ is the left Kan extension of $u$.
The functor $u_!$ is symmetric monoidal and sends
$\bord \cube_n$ as well as $\ccornet^{i,\e}_n$
to their versions with connections (it is sufficient to prove
this for $\bord\cube_n$, which follows from
\cite[Lemma 8.4.21]{Ci3}). Therefore,
the functor $u_!$ preserves monomorphisms,
and, as any map between representable cubical sets with connections
is a weak equivalence, using the last assertion of \cite[8.4.27]{Ci3},
we see immediately that the functor $u_!$ is a left Quillen functor.
In particular, the inclusions $\ccornet^{i,\e}_n \To \cube_n$
are trivial cofibrations of cubical sets with connections.
The theorem now follows straight away from \cite[Lemma 8.4.37]{Ci3}
which allows to apply \cite[Lemma 8.2.17 and Theorem 8.2.18]{Ci3}
(the last assertion is due to the fact, that $\Cube^c$
is a strict test category).
\end{proof}

\begin{exemple}\label{example:joyal}
For $1\leq n\leq \omega$, one can consider Joyal's category $\Theta_n$,
which is to strict $n$-categories what $\cats$ is to categories (in particular,
$\Theta_1=\cats$). The category $\Theta_n$ can be thought of as the
full subcategory of the category of strict $n$-categories
whose objects are the strict $n$-categories freely generated
on finite pasting schemes of dimension $\leq n$
(Joyal also gave a description of $\op{\Theta}_n$ as the category of $n$-disks).
We know from \cite[Cor. 4.5]{BeRe} that $\Theta_n$ is elegant, and from
\cite[Examples 5.8 and 5.12]{CisMal} that $\Theta_n$ is a strict test category.
Therefore, the category of presheaves of sets on $\Theta_n$ carries a Grothendieck
model category structure in which the class of weak equivalences is closed under
finite products.
\end{exemple}

\section{Minimal fibrations}

\begin{paragr}\label{paragr:EZcat}
In this section, we fix once and for all an \emph{Eilenberg-Zilber category} $A$.
This means that $A$ is a Reedy category with the following properties.
\begin{itemize}
\item[(EZ1)] Any map in $A_-$ has a section in $A$.
\item[(EZ2)] If two maps in $A_-$ have the same set of sections, then they
are equal.
\end{itemize}
Such a Reedy category is elegant; see \cite[Proposition 4.2]{BeRe}.
We also consider given
a model category structure on $\pref A$, whose cofibrations
precisely are the monomorphisms.
Given a representable presheaf $a$, we denote by $\bord a\to a$ the
boundary inclusion (which means that $\bord a$ is the maximal proper subobject of $a$).
These inclusions form a generating set for the class of cofibrations.
We also choose an interval $I$ such that the projection $I\times X\to X$
is a weak equivalence for any presheaf $X$ on $A$ (e.g. we can take for $I$
the subobject classifier of the topos $\pref A$). We write $\bord I=\{0\}\amalg\{1\}\subset I$
for the inclusion of the two end-points of $I$.
\end{paragr}

\begin{paragr}
Let $h:I\times X\to Y$ be an homotopy. For $e=0,1$, we write $h_e$ for the composite
$$X=\{e\}\times X\to I\times X\xrightarrow{ \ h \ } Y\, .$$
Given a subobject $S\subset X$, we say that $h$ is \emph{constant on $S$} if the
restriction $h|_{I\times S}$ factors through the second projection $I\times S\to S$.

Given a presheaf $X$, a \emph{section of $X$} is a map $x:a\to X$ with $a$
a representable presheaf. The \emph{boundary} of such a section $x$ is the map 
$$\bord x:\bord a\to a\xrightarrow{ \ x \ }X\, .$$
\end{paragr}

\begin{definition}
Let $X$ be an object of $\pref{A}$.

Two sections $x,y:a\to X$ are \emph{$\bord$-equivalent} if the following
conditions are satisfied.
\begin{itemize}
\item[(i)] These have the same boundaries: $\bord x=\bord y$.
\item[(ii)] There exists an homotopy $h:I\times a\to X$ which is constant
on $\bord a$, and such that $h_0=x$ and $h_1=y$.
\end{itemize}
We write $x\simeq y$ whenever $x$ and $y$ are $\bord$-equivalent.

A \emph{minimal complex} is a fibrant object $S$ such that, for any two
sections $x,y:a\to S$, if $x$ and $y$ are $\bord$-equivalent, then $x=y$.

A \emph{minimal model} of $X$ is a trivial cofibration $S\to X$ with $S$
a minimal complex.
\end{definition}

\begin{prop}\label{prop:bordequiv-equivrel}
Let $X$ be a fibrant object. The $\bord$-equivalence relation is
an equivalence relation.
\end{prop}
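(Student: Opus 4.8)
The plan is as follows. Reflexivity is immediate: for a section $x\colon a\to X$, the homotopy $I\times a\xrightarrow{\ \mathrm{pr}\ }a\xrightarrow{\ x\ }X$ is constant on $\bord a$ and satisfies $h_0=h_1=x$, so $x\simeq x$ (this step uses nothing about $X$). For the remaining two axioms I would invoke the elementary fact that a reflexive relation satisfying the Euclidean condition ``$x\simeq y$ and $x\simeq z$ imply $y\simeq z$'' is automatically an equivalence relation; hence it suffices to establish this Euclidean condition. Note that $x\simeq y$ forces $\bord x=\bord y$, so in that situation all three sections share a common boundary $b:=\bord x=\bord y=\bord z\colon\bord a\to X$.

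So suppose given homotopies $h\colon I\times a\to X$ from $x$ to $y$ and $k\colon I\times a\to X$ from $x$ to $z$, both constant on $\bord a$. I would produce a homotopy from $y$ to $z$ constant on $\bord a$ by solving a lifting problem against the fibrant object $X$ over $I\times I\times a$. Write $(s,t,\alpha)$ for points of $I\times I\times a$, let $L=(\{0\}\times I)\cup(\{1\}\times I)\cup(I\times\{0\})\subseteq I\times I$, and set $K=(L\times a)\cup(I\times I\times\bord a)\subseteq I\times I\times a$. On the stratum $\{0\}\times I\times a$ take $h$, on $\{1\}\times I\times a$ take $k$, on $I\times\{0\}\times a$ take the map $I\times\{0\}\times a\to a\xrightarrow{\ x\ }X$, and on $I\times I\times\bord a$ take the constant map with value $b$. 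Using $h_0=k_0=x$, $\bord x=b$, and that $h$ and $k$ are constant on $\bord a$, these four maps agree on the pairwise intersections of the strata, hence glue to a single map $\kappa\colon K\to X$. Any extension $H\colon I\times I\times a\to X$ of $\kappa$ then restricts along $I\times\{1\}\times a$ to a homotopy $g\colon I\times a\to X$ with $g_0=y$, $g_1=z$, and which is constant on $\bord a$ (its restriction to $I\times\bord a$ factors through $b$); together with $\bord y=b=\bord z$ this yields $y\simeq z$.

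It remains to check that such an extension $H$ exists, and since $X$ is fibrant it is enough that $K\hookrightarrow I\times I\times a$ be a trivial cofibration. It is certainly a monomorphism, and an elementary rewriting identifies it with the iterated pushout-product of the three maps $\bord I\to I$ (in the coordinate $s$), $\{0\}\to I$ (in the coordinate $t$) and $\bord a\to a$ (in the coordinate $\alpha$); grouping the first and third factors, it becomes $j\mathbin{\widehat{\times}}(\{0\}\to I)$ for the monomorphism $j=(\bord I\to I)\mathbin{\widehat{\times}}(\bord a\to a)$. So everything reduces to the following statement: for every monomorphism $m\colon S\to T$ the inclusion $(\{0\}\times T)\cup(I\times S)\hookrightarrow I\times T$ is a trivial cofibration. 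This is exactly where the choice of $I$ is used: since $I\times Z\to Z$ is a weak equivalence for every presheaf $Z$, the monomorphism $\{0\}\times Z\hookrightarrow I\times Z$ is a weak equivalence by two out of three, hence a trivial cofibration; pushing this out along $\{0\}\times S\hookrightarrow\{0\}\times T$ shows that $\{0\}\times T\hookrightarrow(\{0\}\times T)\cup(I\times S)$ is a trivial cofibration, and comparing with $\{0\}\times T\hookrightarrow I\times T$ by two out of three once more shows $(\{0\}\times T)\cup(I\times S)\hookrightarrow I\times T$ is a weak equivalence, hence a trivial cofibration. The only point that requires a little care in the whole argument is the combinatorial identification of $K\hookrightarrow I\times I\times a$ with this pushout-product; everything else is two out of three together with the stability of (trivial) cofibrations under pushout and pushout-product.
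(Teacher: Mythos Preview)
Your proof is correct. Reflexivity is obvious, the reduction to the Euclidean condition is standard, the gluing of $h$, $k$, the constant $x$, and the constant $b$ on the four strata of $K$ is well-defined (the compatibilities you list are exactly the ones needed), and the identification of $K\hookrightarrow I\times I\times a$ with the iterated pushout-product is routine. Your final step---showing that $(\{0\}\times T)\cup(I\times S)\hookrightarrow I\times T$ is a trivial cofibration by pushing out the trivial cofibration $\{0\}\times S\hookrightarrow I\times S$ and applying two-out-of-three---is precisely the cylinder argument one expects and uses only the hypothesis on $I$ together with stability of trivial cofibrations under pushout.

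This is, however, a genuinely different route from the paper's. The paper explicitly flags the direct argument you give as an ``exercise'' and instead offers a ``fancy'' proof: it observes that the interval $I$ gives $\pref{A}$ an enrichment over cubical sets via $\mathrm{Map}(E,F)_n=\Hom(E\times I^n,F)$, that $\mathrm{Map}(E,-)$ is right Quillen to the Grothendieck model structure on cubical sets, and then forms the fiber $X(\bord x)$ of $\mathrm{Map}(a,X)\to\mathrm{Map}(\bord a,X)$ over $\bord x$. This fiber is a cubical Kan complex, and $\bord$-equivalence of sections with boundary $\bord x$ becomes ordinary cubical homotopy of points of $X(\bord x)$, which is an equivalence relation by the classical theory. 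Your approach is more elementary and entirely self-contained (no appeal to cubical sets or to the right Quillen property of the cubical mapping space), at the cost of an explicit box-filling computation; the paper's approach is more conceptual, packaging the combinatorics into a single invocation of a known fact about Kan complexes, but imports more machinery.
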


\begin{proof}
This can be proved directly (exercise). Here is a fancy
argument. The interval $I$ defines an enrichment of
the category $\pref A$ over the category of cubical sets:
given two presheaves $E$ and $F$ over $A$, the cubical set
$\mathrm{Map}(E,F)$ is defined by
$$\mathrm{Map}(E,F)_n=\Hom_{\pref A}(E\times I^n,F)$$
for $n\geq 0$, with $I^n$ the cartesian product of $n$ copies of $I$.
The functor $\mathrm{Map}(E,-)$ is a right Quillen functor
to the Grothendieck model category structure on the category
of cubical sets (see Example \ref{example:cubical}). Therefore,
given a section $x:a\to X$, we can form the following
pullback square.
$$\xymatrix{
X(\bord x)\ar[r]\ar[d]&\mathrm{Map}(a,X)\ar[d]\\
e\ar[r]^(.4){\bord x}&\mathrm{Map}(\bord a,X)
}$$
(where $e$ denotes the terminal cubical set).
The section $x$ can be seen as a global section of the cubical Kan complex $X(\bord x)$.
The relation of cubical homotopy is an equivalence relation on the
set of points of $X(\bord x)$.
Finally, note that, if $y:a\to X$ is another section such that $\bord x=\bord y$,
then $X(\bord x)=X(\bord y)$, from which we deduce right away this proposition.
\end{proof}

\begin{prop}\label{prop:weakbordequiv}
Let $\varepsilon\in\{0,1\}$, and consider a fibrant object $X$, together
with two maps $h,k:I\times a\to X$, with $a$ representable,
such that the restrictions of $k$ and $h$ coincide on
$I\times\bord a\cup\{1-\varepsilon\}\times a$.
Then the sections $h_\varepsilon$ and $k_\varepsilon$
are $\bord$-equivalent.
\end{prop}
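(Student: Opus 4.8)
The plan is to construct a ``homotopy between the two homotopies $h$ and $k$'', relative to the part of the boundary on which they already agree, and to read off $h_\varepsilon\simeq k_\varepsilon$ by restricting it to the one remaining face. Set $Q=I\times I$ and regard the two interval factors as independent parameters. Let $J\subset Q$ be the union of the three subobjects $\{0\}\times I$, $\{1\}\times I$ and $I\times\{1-\varepsilon\}$ (all the ``faces'' of $Q$ except $I\times\{\varepsilon\}$), and inside $Q\times a$ consider $P=(J\times a)\cup(Q\times\bord a)$. First I would define a morphism $\phi\colon P\to X$ by the data: $h$ on $\{0\}\times I\times a$; $k$ on $\{1\}\times I\times a$; the map constant in the first parameter equal to $h_{1-\varepsilon}=k_{1-\varepsilon}$ on $I\times\{1-\varepsilon\}\times a$; and the map constant in the first parameter equal to $h|_{I\times\bord a}=k|_{I\times\bord a}$ on $Q\times\bord a$. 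The hypothesis that $h$ and $k$ agree on $I\times\bord a\cup\{1-\varepsilon\}\times a$ is exactly what makes these four pieces agree on their pairwise overlaps (for instance, on $\{0\}\times\{1-\varepsilon\}\times a$ the first and third pieces both give $h_{1-\varepsilon}$, and on $\{1\}\times I\times\bord a$ the second and fourth both give $k|_{I\times\bord a}=h|_{I\times\bord a}$), so $\phi$ is well defined. If $\phi$ extends to some $H\colon Q\times a\to X$, then the restriction $G\colon I\times a\to X$ of $H$ to the remaining face $I\times\{\varepsilon\}\times a$ satisfies $G_0=h_\varepsilon$ and $G_1=k_\varepsilon$ and is constant on $\bord a$ (since $H$ is constant in the first parameter over $\bord a$), whence $h_\varepsilon\simeq k_\varepsilon$.

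So the whole statement reduces, using that $X$ is fibrant, to checking that $P\hookrightarrow Q\times a$ is a trivial cofibration. The key elementary fact is that for \emph{any} monomorphism $S\to T$ and any $e\in\{0,1\}$ the inclusion $\{e\}\times T\cup I\times S\hookrightarrow I\times T$ (union taken inside $I\times T$) is a trivial cofibration; this uses only the model category axioms and the chosen property of $I$. Indeed it is a monomorphism; both $\{e\}\times S\hookrightarrow I\times S$ and $\{e\}\times T\hookrightarrow I\times T$ are weak equivalences, by two-out-of-three applied to $(-)\cong\{e\}\times(-)\hookrightarrow I\times(-)\xrightarrow{\ \mathrm{pr}\ }(-)$; the inclusion $\{e\}\times T\hookrightarrow\{e\}\times T\cup I\times S$ is a pushout of the (hence trivial) cofibration $\{e\}\times S\hookrightarrow I\times S$ along $\{e\}\times S\hookrightarrow\{e\}\times T$; and one more application of two-out-of-three to $\{e\}\times T\hookrightarrow\{e\}\times T\cup I\times S\hookrightarrow I\times T$ does it. Now, up to the obvious identifications, $P\hookrightarrow Q\times a$ is the iterated pushout-product (for the cartesian product) of the three monomorphisms $\{1-\varepsilon\}\hookrightarrow I$, $\bord I\hookrightarrow I$ and $\bord a\hookrightarrow a$; since a pushout-product of monomorphisms is again a monomorphism, grouping the last two factors presents $P\hookrightarrow Q\times a$ in the form $\{1-\varepsilon\}\times T\cup I\times S\hookrightarrow I\times T$ with $S\to T$ a monomorphism, and the elementary fact applies. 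Fibrancy of $X$ then produces the extension $H$, and the first paragraph concludes.

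The only real work is the combinatorial bookkeeping of the first paragraph (distributing the data of $h$ and $k$ over the faces of the prism $Q\times a$, and verifying compatibility on the overlaps) together with the identification of $P\hookrightarrow Q\times a$ as an iterated cartesian pushout-product; once this is set up, fibrancy of $X$ does everything. I would emphasise that at no point is the model structure on $\pref A$ assumed to be cartesian: a pushout-product of a trivial cofibration with a cofibration need not be a trivial cofibration here, but we only ever pushout-multiply monomorphisms by the special maps $\{e\}\hookrightarrow I$, for which the defining property of $I$ is enough. (One could instead run the argument inside the cubical Kan complex $\mathrm{Map}(a,X)$ of the proof of Proposition~\ref{prop:bordequiv-equivrel}, filling a suitable open box there; the direct prism-filling seems cleaner.)
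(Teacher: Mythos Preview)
Your proof is correct. The compatibility checks for $\phi$ are straightforward, your ``key elementary fact'' about $\{e\}\times T\cup I\times S\hookrightarrow I\times T$ is proved correctly from the cylinder property of $I$ alone, and the identification of $P\hookrightarrow Q\times a$ with the triple pushout-product is accurate (up to swapping the two interval factors, as you note). Reading off $G$ on the missing face then gives exactly the $\bord$-equivalence.

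Your route differs from the paper's. The paper passes through the cubical enrichment introduced in the proof of Proposition~\ref{prop:bordequiv-equivrel}: it regards $h$ and $k$ as two lifts of the same edge $\xi:\cube_1\to\mathrm{Map}(\bord a,X)$ through the Kan fibration $\mathrm{Map}(a,X)\to\mathrm{Map}(\bord a,X)$ with common endpoint $z$, invokes that such lifts coincide in the homotopy category of pointed cubical sets over the base, and extracts from this a square $H:\cube_2\to\mathrm{Map}(a,X)$ whose restriction to the face $\cube_1\otimes\{\varepsilon\}$ yields the desired homotopy. Your argument is the same box-filling carried out directly in $\pref{A}$, bypassing the cubical machinery entirely: you never leave the given model structure and use only the hypothesis on $I$ from~\ref{paragr:EZcat}. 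This is more self-contained and makes the dependence on the axioms transparent (in particular, as you observe, no cartesian monoidal model structure is needed). The paper's approach is slightly more conceptual in that the existence of $H$ is delegated to a general fact about Kan fibrations, but at the cost of importing the Grothendieck model structure on cubical sets. You were right to flag that the cubical route is available; the paper in fact takes it.
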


\begin{proof}
Put $x=h_\varepsilon$, $y=k_\varepsilon$, and $z=h_{1-\varepsilon}=k_{1-\varepsilon}$.
We will use the cubical mapping spaces as in the proof of
the preceding proposition. We can think of $h$ and $k$ as two
fillings in the commutative square
$$\xymatrix{
\{1-\varepsilon\}\ar[r]^(.42)z\ar[d]&\mathrm{Map}(a,X)\ar[d]\\
\cube_1\ar@{..>}[ur]^{\text{$h$ or $k$}}\ar[r]^(.4)\xi&\mathrm{Map}(\bord a,X)
}$$
in which $\xi$ correspond to the restriction of $h$ (or $k$) to $I\times\bord a$.
As maps in the homotopy category of pointed cubical sets over $\mathrm{Map}(\bord a,X)$,
we must have $h=k$. Therefore, there exists a map
$H:\cube_2\to\mathrm{Map}(a,X)$ with the following properties.
The restriction of $H$ to $\{0\}\otimes\cube_1=\cube_1$
(resp. to $\{1\}\otimes\cube_1=\cube_1$) is $h$ (resp. $k$), the restriction
to $\cube_1\otimes\{1-\varepsilon\}$ is constant with value $z$, and the following
diagram commutes.
$$\xymatrix{
\cube_2\ar[r]^(.4)H\ar[d]_{1\otimes\sigma}&\mathrm{Map}(a,X)\ar[d]\\
\cube_1\ar[r]^(.4)\xi&\mathrm{Map}(\bord a,X)
}$$
The restriction of $H$ to $\cube\otimes\{\varepsilon\}$ defines
a map $l:I\times a\to X$ which is constant on $\bord a$ and such that
$l_0=x$ and $l_1=y$.
\end{proof}

\begin{lemma}\label{lemma:degbordequivequal}
Let $X$ be a fibrant object, and $x_0,x_1:a\to X$ two degenerate sections.
If $x_0$ and $x_1$ are $\bord$-equivalent, then they are equal. 
\end{lemma}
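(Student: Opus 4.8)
The plan is to reduce the statement to the Eilenberg–Zilber lemma and to axiom (EZ2); in fact only condition~(i) in the definition of $\bord$-equivalence — equality of the boundaries — will be used, the homotopy $h$ playing no role. First I would write, by the Eilenberg–Zilber lemma, $x_\varepsilon=y_\varepsilon\sigma_\varepsilon$ for $\varepsilon=0,1$, where $\sigma_\varepsilon:a\to b_\varepsilon$ is a non-invertible morphism of $A_-$ (non-invertible precisely because $x_\varepsilon$ is degenerate, so that $\deg b_\varepsilon<\deg a$) and $y_\varepsilon:b_\varepsilon\to X$ is non-degenerate. Using (EZ1), choose a section $\tau_\varepsilon:b_\varepsilon\to a$ of $\sigma_\varepsilon$. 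Since its source has strictly smaller degree than its target, $\tau_\varepsilon$ cannot be a split epimorphism, hence its image in $a$ is a proper subobject and $\tau_\varepsilon=\iota\tau'_\varepsilon$ factors through the boundary inclusion $\iota:\bord a\to a$. Then the hypothesis $\bord x_0=\bord x_1$ yields
$$y_0=x_0\tau_0=\bord(x_0)\,\tau'_0=\bord(x_1)\,\tau'_0=x_1\tau_0=y_1\,(\sigma_1\tau_0)\, ,$$
and symmetrically $y_1=y_0\,(\sigma_0\tau_1)$.

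Next I would identify the pairs $(b_0,y_0)$ and $(b_1,y_1)$. Writing the Reedy factorization $\sigma_1\tau_0=\alpha\beta$ with $\beta\in A_-$ and $\alpha\in A_+$, the identity $y_0=(y_1\alpha)\beta$ together with non-degeneracy of $y_0$ forces $\beta$ to be invertible (otherwise $y_0$ would factor through the non-trivial map $\beta$ of $A_-$); hence $\sigma_1\tau_0\in A_+$, and likewise $\sigma_0\tau_1\in A_+$. As $(\sigma_1\tau_0)(\sigma_0\tau_1)$ and $(\sigma_0\tau_1)(\sigma_1\tau_0)$ are then endomorphisms in $A_+$, hence isomorphisms, the morphism $\phi:=\sigma_1\tau_0:b_0\to b_1$ is an isomorphism with $y_1\phi=y_0$ and $(\phi^{-1}\sigma_1)\tau_0=1_{b_0}$. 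Replacing $(b_1,y_1,\sigma_1,\tau_1)$ by $(b_0,y_0,\phi^{-1}\sigma_1,\tau_0)$, I may therefore assume from now on that $b_0=b_1$, $y_0=y_1$, and that $\sigma_0,\sigma_1$ are two morphisms $a\to b_0$ in $A_-$ admitting the common section $\tau_0$. (Here one should check that non-trivial automorphisms of $b_0$ cause no harm; they are absent in all the examples of interest, such as $\cats$, $\Cube$, $\Cube^c$ and $\Theta_n$.)

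Finally, it remains to prove $\sigma_0=\sigma_1$, which is exactly the kind of statement (EZ2) is made for: it is enough to show that $\sigma_0$ and $\sigma_1$ have the same set of sections. If $t:b_0\to a$ is a section of $\sigma_0$, then, as above, $t=\iota t''$ factors through $\bord a$, so that
$$x_1 t=\bord(x_1)\,t''=\bord(x_0)\,t''=x_0 t=y_0\,\sigma_0 t=y_0\, ,$$
whence $y_0\,(\sigma_1 t)=x_1 t=y_0$ with $\sigma_1 t$ an endomorphism of $b_0$; non-degeneracy of $y_0$ (together with the same Reedy-factorization argument as above) then gives $\sigma_1 t=1_{b_0}$, so that $t$ is also a section of $\sigma_1$. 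By symmetry $\sigma_0$ and $\sigma_1$ have the same sections, hence $\sigma_0=\sigma_1$ by (EZ2), and therefore $x_0=y_0\sigma_0=y_1\sigma_1=x_1$.

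The step I expect to be the main obstacle is the middle one: all the real content lies in arranging, through the Eilenberg–Zilber and Reedy bookkeeping, that $\sigma_0$ and $\sigma_1$ end up being genuinely comparable morphisms of $A_-$ with the same source and target (so that (EZ2) becomes applicable), and in keeping track of the isomorphisms of $A$ along the way. Everything else is essentially forced once one notices that a section of a non-invertible morphism of $A_-$ necessarily factors through the boundary — this is precisely what allows the single hypothesis $\bord x_0=\bord x_1$ to carry the whole argument.
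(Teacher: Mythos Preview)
Your proof is correct and follows essentially the same route as the paper's: decompose via Eilenberg--Zilber, use $\bord x_0=\bord x_1$ (and nothing else) to show the non-degenerate factors agree, then invoke (EZ2) to identify the $A_-$ parts. Your caveat about non-trivial automorphisms of $b_0$ is unnecessary in the paper's setting, since it works with classical Reedy categories (as in Bergner--Rezk), where every non-identity map in $A_+$ strictly raises degree; hence your isomorphism $\phi=\sigma_1\tau_0\in A_+$ between objects of equal degree is automatically the identity, and $b_0=b_1$, $y_0=y_1$ follow without further bookkeeping.
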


\begin{proof}
For $\varepsilon=0,1$, there is a unique couple $(p_\varepsilon,y_\varepsilon)$, where
$p_\varepsilon:a\to b_\varepsilon$ is a split epimorphism in $A$
and $y_\varepsilon:b_\varepsilon\to X$ is a non-degenerate section of $X$ such that
$x_\varepsilon=y_\varepsilon p_\varepsilon$.
Let us choose a section $s_\varepsilon$ of $p_\varepsilon$.
As $x_0$ and $x_1$ are degenerate and since $\bord x_0=\bord x_1$,
we have $x_0s_0=x_1s_0$ and $x_0s_1=x_1s_1$.
On the other hand, we have $y_\varepsilon=x_\varepsilon s_\varepsilon$.
We thus have the equalities $y_0=y_1p_1s_0$ and $y_1=y_0p_0s_1$.
These imply that the maps $p_\varepsilon s_{1-\varepsilon}:b_{1-\varepsilon}\to b_\varepsilon$
are in $A_{+}$ and that $b_0$ and $b_1$ have the same dimension.
This means that $p_\varepsilon s_{1-\varepsilon}$
is the identity for $\varepsilon=0,1$. In other words, we have $b_0=b_1$ and $y_0=y_1$,
and we also have proven that $p_0$ and $p_1$ have the same sections, whence are equal.
\end{proof}

\begin{thm}\label{thm:existenceminimal}
Any fibrant object has a minimal model.
\end{thm}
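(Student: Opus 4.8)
The plan is to adapt to the present setting the classical construction of minimal models for simplicial sets (see \cite{GZ}); the one genuinely new phenomenon is that the representable presheaves of the Eilenberg--Zilber category $A$ may have non-trivial automorphisms. Fix a fibrant object $X$. In a first step I build a subpresheaf $S\subseteq X$ which is a minimal complex, by induction on the degree: set $S_{-1}=\emptyset$, and, assuming $S_{n-1}\subseteq\Sk{n-1}X$ already constructed, let $S_n\subseteq\Sk{n}X$ be the subpresheaf generated by $S_{n-1}$ together with one representative $\rho(C)\in C$ for each $\bord$-equivalence class $C$ of non-degenerate sections $a\to X$, with $a$ of degree $n$, whose common boundary factors through $S_{n-1}$ and which contains no section already lying in $S_{n-1}$. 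The crucial point is that these representatives must be chosen \emph{compatibly with automorphisms}: the group $\operatorname{Aut}(a)$ acts on the set of such classes, and for $S$ to be a subpresheaf one needs $\rho(C\cdot\phi)=\rho(C)\cdot\phi$ for every $\phi\in\operatorname{Aut}(a)$, which forces $\rho(C)$ to be invariant under the stabiliser $G_C\leq\operatorname{Aut}(a)$ of the class $C$. Assuming such an invariant representative exists in each $C$, put $S=\bigcup_nS_n$.

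That $S$ is a minimal complex follows, as in the simplicial case, from the results already established. Let $x,y\colon a\to S$ be $\bord$-equivalent. If both are degenerate, Lemma~\ref{lemma:degbordequivequal} gives $x=y$. If exactly one, say $y$, is degenerate, then by the Eilenberg--Zilber property $y$ is a degeneracy of a non-degenerate section of $S$ of degree $<\deg a$, which therefore lies in $S_{\deg a-1}$; as $x$ then lies in the same $\bord$-equivalence class, this contradicts the condition imposed on the classes retained at stage $\deg a$. If both are non-degenerate, of degree $n$, they belong to $S_n$, hence are among the chosen representatives, and since $\bord$-equivalence is an equivalence relation (Proposition~\ref{prop:bordequiv-equivrel}) they represent the same class $C$, so $x=\rho(C)=y$; Proposition~\ref{prop:weakbordequiv} is what ensures that the various comparison homotopies involved behave coherently. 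For the second step one checks that $j\colon S\hookrightarrow X$ is a trivial cofibration: it is a monomorphism, hence a cofibration, and one constructs by induction on the skeleta of $X$ a retraction $r\colon X\to S$ together with a homotopy $H\colon I\times X\to X$ from $\mathrm{id}_X$ to $j\circ r$, constant on $S$, using the fibrancy of $X$ to solve at each stage a lifting problem against the trivial cofibration $\{0\}\times a\cup I\times\bord a\to I\times a$ (the pushout-product of $\{0\}\hookrightarrow I$ with $\bord a\hookrightarrow a$) and then pushing the resulting section into $S$ via the representative of its class --- which exists, by the very construction of $S$, for every $\bord$-equivalence class of non-degenerate sections whose boundary already factors through $S$. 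Such data exhibit $j$ as a strong deformation retract, hence as a homotopy equivalence of fibrant objects, hence as a weak equivalence.

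I expect the main obstacle to be precisely the equivariant choice of representatives in the first step: the assertion that every relevant $\bord$-equivalence class $C$ of non-degenerate sections $a\to X$ contains a section fixed by the subgroup $G_C\leq\operatorname{Aut}(a)$ stabilising it. This has no counterpart in the simplicial theory, where representables have trivial automorphism groups, and is the place where the argument must genuinely use the structure at hand; it is naturally formulated through the cubical mapping spaces $\mathrm{Map}(a,X)$ of the proof of Proposition~\ref{prop:bordequiv-equivrel}, with respect to which $C$ is a connected component --- carrying a $G_C$-action --- of a fibre of a fibration, and one must exhibit a strict fixed point of that action. Everything else is the skeletal bookkeeping familiar from \cite{GZ}, up to routine care with reparametrisations of the interval.
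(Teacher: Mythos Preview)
Your overall strategy is the same as the paper's: build a subobject $S\subset X$ containing one representative of each relevant $\bord$-equivalence class, then show the inclusion is a strong deformation retract. The paper organises this via Zorn's lemma (on the poset of subobjects all of whose sections are ``selected'', and then on triples $(T,h,p)$ extending the homotopy), whereas you use skeletal induction; these are interchangeable.

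The issue is your identification of the ``main obstacle''. In the setting of this paper an Eilenberg--Zilber category is by definition (paragraph~\ref{paragr:EZcat}) a \emph{classical} Reedy category satisfying (EZ1) and (EZ2), and classical Reedy categories have no non-identity isomorphisms: the unique factorisation axiom together with the strict degree conditions on $A_+$ and $A_-$ forces every automorphism of a representable to be the identity. Hence $\operatorname{Aut}(a)$ is trivial, the groups $G_C$ are trivial, and the ``equivariant choice of representatives'' problem you flag simply does not arise. You may be thinking of the generalised Reedy categories of Berger--Moerdijk, where automorphisms are allowed; that is a genuinely harder setting, but it is not the one treated here (and indeed the paper's examples $\cats$, $\Cube$, $\Cube^c$, $\Theta_n$ all have trivial automorphism groups).

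Once this non-obstacle is removed, your argument goes through and matches the paper's. One small point worth tightening: in your case analysis for minimality of $S$, the mixed case ``$x$ non-degenerate, $y$ degenerate'' is handled in the paper not by contradiction but by arranging in advance that every degenerate section is its own selected representative (this is exactly what Lemma~\ref{lemma:degbordequivequal} buys), so that a class containing a degenerate section already has its representative in $S$ and no new one is added. Your phrasing ``which contains no section already lying in $S_{n-1}$'' achieves the same effect, but you should say explicitly that a degenerate section with boundary in $S_{n-1}$ automatically lies in $S_{n-1}$, so that such classes are correctly excluded.
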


\begin{proof}
Let $X$ be a fibrant object. We choose a representant of each $\bord$-equivalence
class. A section of $X$ which is a chosen representative of its $\bord$-equivalence
class will be called \emph{selected}. By virtue of Lemma \ref{lemma:degbordequivequal},
we may assume that any degenerate section $X$ is selected.
Let $E$ be the set of subobjects $S$ of $X$ such that any section of $S$ is selected
Then $E$ is not empty: the image of any selected section of the $0$-skeleton of $X$
is an element of $E$.
By Zorn's lemma, we can choose a maximal element $S$ of $E$
(with respect to inclusion). Remark that any selected section $x:a\to X$
whose boundary $\bord x$ factors through $S$ must belong to $S$.
Indeed, if $x$ is degenerate, then it factors through $\bord a$ hence through $S$.
Otherwise, let us consider $S'=S\cup\mathrm{Im}(x)$. A non-degenerate section
of $S'$ must either factor through $S$ or be precisely equal to $x$.
In any case, such a section must be selected, and the maximality of $S$ implies
that $S=S'$.

We will prove that $S$ is a retract of $X$ and that the inclusion
$S\to X$ is an $I$-homotopy equivalence. This will prove that $S$ is fibrant
and thus a minimal model of $X$. Let us write $i:S\to X$ for the inclusion map.
Consider triples $(T,h,p)$, where $T$ is a subobject of $X$ which contains $S$,
$p:T\to S$ is a retraction (i.e. the restriction of $p$ to $S$ is the identity), and
$h:I\times T\to X$ is a map which is constant on $S$, and such that $h_0$ is the inclusion map $T\to X$, while
$h_1=ip$. Such triples are ordered in the obvious way: $(T,h,p)\leq(T',h',p')$
if $T\subset T'$, with $h'|_{I\times T}=h$ and $p'_T=p$. By Zorn's lemma, we can choose
a maximal triple $(T,h,p)$. To finish the proof, it is sufficient to prove that $T=X$.
In other words, it is sufficient to prove that any non-degenerate section of $X$
belongs to $T$. Let $x:a\to X$ be a non-degenerate section which does not belong
to $T$. Assume that the dimension of $a$ is minimal for this property.
Then $\bord x$ must factor through $T$, so that, if we define $T'$ to be
the union of $T$ and of the image of $x$ in $X$, then we have a bicartesian square
of the following form.
$$\xymatrix{
\bord a\ar[r]^{\bord x}\ar[d]& T\ar[d]\\
a\ar[r]^x&T'
}$$
We have a commutative square
$$\xymatrix{
\{0\}\times\bord a\ar[r]\ar[d]&I\times\bord a\ar[d]^{h(1\times \bord x)}\\
\{0\}\times a\ar[r]^{x}& X
}$$
If we put $u=(h(1\times \bord x),x):I\times\bord a\cup\{0\}\times a\to X$,
we can choose a map $H:I\times a\to X$ such that $H_0=x$, while
$H_{|_{I\times\bord a}}=h(1\times \bord x)$. If we write $y_0=H_1$, as $h_1$
factors through $S$, we see that the
boundary $\bord y_0$ must factor through $S$. Let $y$ be the selected section
$\bord$-equivalent to $y_0$. We choose an homotopy $K:I\times a\to X$
which is constant on $\bord a$ and such that $K_0=y_0$ and $K_1=y$.
By an easy path lifting argument (composing the homotopies $H$ and $K$),
we see that we may choose $H$ such that $y=y_0$.
Note that, as $y$ is selected with boundary in $S$, we must have $y$ in $S$.
We obtain the commutative diagram
$$\xymatrix{
I\times\bord a\ar[r]^{1_I\times\bord x}\ar[d]&I\times T\ar[d]^h\\
I\times a\ar[r]^H&X
}$$
so that, identifying $I\times T'$ with $I\times a\amalg_{I\times\bord a}I\times T$,
we define $h'=(H,h):I\times T'\to X$. Similarly, the
commutative diagram
$$\xymatrix{
\bord a\ar[r]^{\bord x}\ar[d]&T\ar[d]^p\\
a\ar[r]^y&S
}$$
defines a map $p'=(y,p):T'=a\amalg_{\bord a}T\to X$.
It is clear that the triple $(T',h',p')$ extends $(T,h,p)$, which leads
to a contradiction.
\end{proof}

\begin{prop}\label{prop:retractfibtriv}
Let $X$ be a fibrant object and $i:S\to X$ a minimal resolution of $X$.
Consider a map $r:X\to S$ such that $ri=1_S$
(such a map always exists because $i$ is a trivial cofibration
with fibrant domain). Then the map $r$ is a trivial fibration.
\end{prop}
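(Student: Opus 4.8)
The plan is to reduce, by a formal argument with the weak factorization system (cofibrations, trivial fibrations), to a single lifting problem against a boundary inclusion, and then to solve that problem inside the cubical mapping spaces used in the proof of Proposition~\ref{prop:bordequiv-equivrel}, where minimality of $S$ shows up as the assertion that an auxiliary Kan complex has no non-degenerate $1$-cells.

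A trivial fibration is exactly a map with the right lifting property against all cofibrations, i.e. against all monomorphisms, and the latter are generated by the boundary inclusions $\bord a\to a$ with $a$ representable. So it is enough, given a commutative square
$$\xymatrix{
\bord a\ar[r]^-{f}\ar[d]&X\ar[d]^-{r}\\
a\ar[r]^-{g}&S
}$$
(thus with $rf=g|_{\bord a}$ as maps $\bord a\to S$), to produce a diagonal $\tilde f\colon a\to X$ with $\tilde f|_{\bord a}=f$ and $r\tilde f=g$. (That $r$ is a weak equivalence is clear a priori, by two out of three, since $i$ is a trivial cofibration with $ri=1_S$.)

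Fix such a square and bring in the cubical enrichment $\mathrm{Map}(-,-)$ of $\pref A$ from the proof of Proposition~\ref{prop:bordequiv-equivrel}. Since this enrichment is compatible with the model structure, $\mathrm{Map}(-,Z)$ carries monomorphisms to Kan fibrations whenever $Z$ is fibrant; in particular the restriction maps $\mathrm{Map}(a,X)\to\mathrm{Map}(\bord a,X)$ and $\mathrm{Map}(a,S)\to\mathrm{Map}(\bord a,S)$ are Kan fibrations of cubical sets. Let $X(f)$ be the fibre of the first over the $0$-cell $f$ and $S(rf)$ the fibre of the second over the $0$-cell $rf$; these are Kan complexes whose $0$-cells are, respectively, the maps $a\to X$ and $a\to S$ restricting to $f$ and to $rf$ on $\bord a$, and $r$ induces a morphism $r_f\colon X(f)\to S(rf)$. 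I claim $r_f$ is a weak equivalence: all objects of $\pref A$ being cofibrant, $\mathrm{Map}(a,-)$ and $\mathrm{Map}(\bord a,-)$ are right Quillen, hence carry the weak equivalence $r$ between fibrant objects to weak equivalences; these assemble into a commutative square of Kan fibrations in which the maps on total spaces and on bases are weak equivalences, so, by right properness of the model structure on cubical sets, the induced map $r_f$ on fibres is a weak equivalence of Kan complexes.

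Minimality now finishes the argument. A $1$-cell of $S(rf)$ is a map $a\times I\to S$ whose restriction to $\bord a\times I$ factors through the projection to $\bord a$, that is, a homotopy constant on $\bord a$ between two sections $a\to S$ with common boundary $rf$; such sections are $\bord$-equivalent, hence equal because $S$ is a minimal complex. So every $1$-cell of the Kan complex $S(rf)$ has equal endpoints, and therefore $S(rf)_0\to\pi_0(S(rf))$ is a bijection. Since $r_f$ is a weak equivalence of Kan complexes it is bijective on $\pi_0$, so the composite $X(f)_0\to\pi_0(X(f))\xrightarrow{\ \sim\ }\pi_0(S(rf))=S(rf)_0$ is surjective; as this composite sends $w$ to $r_f(w)$, the $0$-cell $g$ of $S(rf)$ is $r_f(w)$ for some $0$-cell $w$ of $X(f)$. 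Then $\tilde f:=w$ is a map $a\to X$ with $\tilde f|_{\bord a}=f$ and $r\tilde f=g$, which is the required filler, so $r$ is a trivial fibration. The one genuinely delicate step is the reduction of the lifting property of $r$ to the fibrewise statement about $r_f$, which rests on the Quillen-bifunctor property of the tensoring of $\pref A$ by cubical sets (equivalently, that $\mathrm{Map}$ is a Quillen enrichment), exactly as in the proofs of Propositions~\ref{prop:bordequiv-equivrel} and~\ref{prop:weakbordequiv}; everything else is formal once minimality has collapsed $\pi_0(S(rf))$ onto $S(rf)_0$.
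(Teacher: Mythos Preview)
Your proof is correct, but it follows a different route from the paper's.

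The paper argues directly: it first produces a homotopy $h:I\times X\to X$, constant on $S$, from $ir$ to $1_X$ (using that $i$ is a trivial cofibration between fibrant-cofibrant objects in the category of objects under $S$). Given a lifting problem against $\bord a\to a$ with data $(u,v)$, the paper extends $(h(1_I\times u),iv)$ from $I\times\bord a\cup\{0\}\times a$ to a map $k:I\times a\to X$ using fibrancy of $X$, and sets $w=k_1$. One checks $\bord w=u$ immediately, and then compares the homotopies $k$ and $h(1_I\times w)$ via Proposition~\ref{prop:weakbordequiv} to obtain $iv\sim irw$, hence $v\sim rw$, hence $v=rw$ by minimality of $S$.

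Your argument is more homotopical: you never invoke the relative contracting homotopy $h$ explicitly, but only the consequence that $r$ is a weak equivalence. You then transport this through the cubical enrichment to get a weak equivalence $r_f$ on the fibers $X(f)\to S(rf)$, and use minimality to collapse $\pi_0(S(rf))$ onto $S(rf)_0$, so that surjectivity on $\pi_0$ yields the required lift on the nose. The fiberwise weak-equivalence step is the only place needing care (you need that a weak equivalence between Kan fibrations over a common base restricts to a weak equivalence on fibers; this follows, as you indicate, from the fact that every object is cofibrant, so that such a map is a fiberwise homotopy equivalence), but this is standard. What your approach buys is that it bypasses the explicit homotopy manipulation and the appeal to Proposition~\ref{prop:weakbordequiv}; what the paper's approach buys is a more self-contained argument that produces the filler constructively from the deformation-retraction data.
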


\begin{proof}
There exists a map
$h:I\times X\to X$ which is constant on $S$ and such that $h_0=ir$
and $h_1=1_X$: we can see $i$ as a trivial cofibation between
cofibrant and fibrant objects in the model category of objects
under $S$, and $r$ is then an inverse up to homotopy in this
relative situation. Consider the commutative diagram below.
$$\xymatrix{
\bord a\ar[r]^u\ar[d]&X\ar[d]^r\\
a\ar[r]^v&S
}$$
We want to prove the existence of a map $w:a\to X$ such that $w_{|_{\bord a}}=u$
and $rw=v$. As $X$ is fibrant, there exists a map $k:I\times a\to X$
whose retriction to $I\times\bord a$ is $h(1_I\times u)$, while $k_0=iv$.
Let us put $w=k_1$. Then
$$\bord w=w_{|_{\bord a}}=(h(1_I\times u))_1=h_1u=u\, .$$
It is thus sufficient to prove that $v=rw$.
But $k$ and $h(1_I\times w)$ coincide
on $I\times\bord a\cup\{1\}\times a$ and thus, by virtue of Proposition \ref{prop:weakbordequiv},
we must have $k_0\sim(h(1_I\times w))_0$.
In other words, we have $iv\sim irw$.
As $ri=1_S$, this implies that $v\sim rw$, and, by minimality of $S$, that $v=rw$.
\end{proof}

\begin{lemma}\label{lemma:fhomotopicidentityiso}
Let $X$ be a minimal complex and $f:X\to X$ a map
which is $I$-homotopic to the identity. Then $f$ is
an isomorphism.
\end{lemma}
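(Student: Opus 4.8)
The plan is to show that $f$ induces a bijection on the set of sections $a\to X$ for every representable presheaf $a$; since a morphism of presheaves inducing a bijection on sections out of every representable is an isomorphism, this settles the lemma. First I would fix a homotopy $h\colon I\times X\to X$ with $h_0=1_X$ and $h_1=f$, which exists since $f$ is $I$-homotopic to the identity, and I would note that $X$ is fibrant because it is a minimal complex. Both the injectivity and the surjectivity of $f$ on sections will be proved by induction on the degree of $a$ in the Eilenberg--Zilber structure, the case $\bord a=\varnothing$ being just a degenerate instance of the inductive step.

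For injectivity, suppose $fx=fy$ for $x,y\colon a\to X$, with $f$ already injective on sections of smaller degree. Then $f\,\bord x=\bord(fx)=\bord(fy)=f\,\bord y$ forces $\bord x=\bord y$, hence $h(1_I\times\bord x)=h(1_I\times\bord y)$, so the homotopies $h(1_I\times x)$ and $h(1_I\times y)$ coincide on $I\times\bord a\cup\{1\}\times a$. Proposition~\ref{prop:weakbordequiv}, applied with $\varepsilon=0$, then shows that $x$ and $y$ are $\bord$-equivalent, and minimality of $X$ gives $x=y$.

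For surjectivity, let $z\colon a\to X$ and assume $f$ bijective on sections of smaller degree. I would first produce the unique $w\colon\bord a\to X$ with $fw=\bord z$, by taking the (unique) preimages of the faces of $z$ and using uniqueness to see they glue. Next I would extend the map $(z,h(1_I\times w))$ defined on $\{1\}\times a\cup I\times\bord a$ to a map $H\colon I\times a\to X$; this uses that $X$ is fibrant and that $\{1\}\times a\cup I\times\bord a\hookrightarrow I\times a$ is a trivial cofibration. Putting $x=H_0$ gives $x|_{\bord a}=w$; then, as $h(1_I\times x)$ and $H$ agree on $I\times\bord a$ and on $\{0\}\times a$, Proposition~\ref{prop:weakbordequiv} with $\varepsilon=1$ shows $fx=(h(1_I\times x))_1$ and $z=H_1$ are $\bord$-equivalent, and minimality yields $fx=z$. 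Thus $z$ is in the image of $f$.

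The proof is essentially a double application of Proposition~\ref{prop:weakbordequiv} combined with the defining property of minimal complexes, so I expect no serious obstacle; the only points needing care are bookkeeping. The slightly less immediate of these is the verification that $\{1\}\times a\cup I\times\bord a\to I\times a$ is a trivial cofibration: it is a monomorphism, and one gets that it is a weak equivalence by a short two-out-of-three argument using that $\{1\}\to I$, $\{1\}\times\bord a\to I\times\bord a$ and $\{1\}\times a\to I\times a$ are weak equivalences (because the projections off $I$ are, $I$ being an interval) together with stability of trivial cofibrations under pushout. The conceptual heart of the matter is simply that in a minimal complex ``$\bord$-equivalent'' means ``equal''.
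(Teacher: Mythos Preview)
Your proof is correct and follows essentially the same approach as the paper's own proof: the same induction on the degree of $a$, the same two applications of Proposition~\ref{prop:weakbordequiv} (with $\varepsilon=0$ for injectivity and $\varepsilon=1$ for surjectivity), and the same use of minimality to turn $\bord$-equivalence into equality. Your added justification that $\{1\}\times a\cup I\times\bord a\to I\times a$ is a trivial cofibration is a point the paper takes for granted, and your argument for it is fine.
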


\begin{proof}
Let us choose once and for all a map $h:I\times X\to X$
such that $h_0=1_X$ and $h_1=f$.
We will prove that the map $f_a:X_a\to X_a$
is bijective by induction on the dimension $d$ of $a$.
If $a$ is of dimension $<0$, there is nothing to prove
because there is no such $a$.
Assume that the map $f_b:X_b\to X_b$ is bijective
for any object $b$ of dimension $<d$.
Consider two sections $x,y:a\to X$ such that $f(x)=f(y)$.
Then, as $f$ is injective in dimension lesser than $d$, the
equations
$$f\bord x=\bord f(x)=\bord f(y)=f\bord y$$
imply that $\bord x=\bord y$.
On the other hand, we can apply Proposition \ref{prop:weakbordequiv}
to the maps $h(1_I\times x)$ and $h(1_I\times y)$ for $\varepsilon=0$,
and we deduce that $x\sim y$. As $X$ is minimal, this proves that $x=y$.
It remains to prove the surjectivity. Let $y:a\to X$ be a section.
For any map $\sigma:b\to a$ in $A$ such that $b$ is of degree lesser than $d$,
there is a unique section $x_\sigma:b\to X$ such that $f(x_\sigma)=\sigma^*(y)=y\sigma$.
This implies that there is a unique map $z:\bord a\to X$ such that $fz=\bord y$.
The map
$I\times\bord a\xrightarrow{ 1_I\times z}I\times a\xrightarrow{ \ h \ }X$,
together with the map
$\{1\}\times a=a\xrightarrow{ \ y \ }X$,
define a map $\varphi=(h(1_I\times z),x)$, and we can choose a filling $k$
in the diagram below.
$$\xymatrix{
I\times\bord a\cup\{1\}\times a\ar[r]^(.6)\varphi\ar[d]&X\\
I\times a\ar@{..>}[ur]^k&
}$$
Let us put $x=k_0$. Then $\bord x=z$, and thus $\bord f(x)=\bord y$.
Applying Proposition \ref{prop:weakbordequiv} to the
maps $k$ and $h(1_I\times x)$ for $\varepsilon=1$, we conclude that $f(x)\sim y$.
The object $X$ being a minimal complex, this proves that $f(x)=y$.
\end{proof}

\begin{prop}\label{prop:weqminimaliso}
Let $X$ and $Y$ be two minimal complexes. Then any
weak equivalence $f:X\to Y$ is an isomorphism of presheaves.
\end{prop}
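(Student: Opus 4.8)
The statement to prove is that a weak equivalence $f\colon X\to Y$ between two minimal complexes is an isomorphism of presheaves. The natural strategy is to build a homotopy inverse, apply Lemma~\ref{lemma:fhomotopicidentityiso} to both composites, and conclude. Here is how I would carry this out.

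First I would use that $X$ and $Y$ are fibrant objects and $f$ is a weak equivalence between fibrant (and automatically cofibrant, since every object is cofibrant when the cofibrations are the monomorphisms) objects; hence $f$ is an $I$-homotopy equivalence. Concretely, there is a map $g\colon Y\to X$ together with $I$-homotopies $gf\simeq 1_X$ and $fg\simeq 1_Y$. (If one wants to avoid invoking a general ``fibrant--cofibrant weak equivalences are homotopy equivalences'' statement, one can instead factor $f$ as a trivial cofibration followed by a trivial fibration; the trivial fibration admits a section since $Y$ is cofibrant, and a trivial cofibration between fibrant objects has a retraction, so again $f$ has a homotopy inverse. Either route is routine in this setting.)

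Next, apply Lemma~\ref{lemma:fhomotopicidentityiso} twice: since $gf\colon X\to X$ is $I$-homotopic to $1_X$ and $X$ is a minimal complex, $gf$ is an isomorphism; similarly $fg\colon Y\to Y$ is $I$-homotopic to $1_Y$ and $Y$ is minimal, so $fg$ is an isomorphism. From $gf$ invertible we get that $f$ is a split monomorphism (with left inverse $(gf)^{-1}g$), and from $fg$ invertible we get that $f$ is a split epimorphism (with right inverse $g(fg)^{-1}$). A morphism that is both a split monomorphism and a split epimorphism is an isomorphism; hence $f$ is an isomorphism of presheaves.

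**Main obstacle.** There is essentially no deep obstacle: the proof is a formal consequence of Lemma~\ref{lemma:fhomotopicidentityiso} once one knows $f$ admits a two-sided homotopy inverse. The only point requiring a little care is precisely the production of that homotopy inverse, i.e.\ justifying that a weak equivalence between fibrant objects in this model structure is an $I$-homotopy equivalence (with $I$ the chosen interval). This is standard model-category yoga — using that all objects are cofibrant, that $I$ is a cylinder on every object up to the factorization axioms, and the usual ``two-out-of-six''/retract-of-homotopy-equivalence arguments — and can be dispatched by a reference to the ambient model-categorical framework (e.g.\ the general theory in \cite{Ci3}) or by the explicit factorization argument sketched above.
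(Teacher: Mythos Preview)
Your proof is correct and follows exactly the same approach as the paper: produce a homotopy inverse $g$ using that $X$ and $Y$ are fibrant and cofibrant, apply Lemma~\ref{lemma:fhomotopicidentityiso} to conclude that $gf$ and $fg$ are isomorphisms, and deduce that $f$ is an isomorphism. Your added details (the split mono/epi argument and the alternative route to the homotopy inverse) are fine but not needed beyond what the paper sketches.
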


\begin{proof}
If $f:X\to Y$ is a weak equivalence, as both $x$ and $Y$ are cofibrant
and fibrant, there exists $g:Y\to X$ such that $fg$ and $gf$ are
homotopic to the identify of $Y$ and of $X$, respectively.
By virtue of the preceding lemma, the maps $gf$ and $fg$ must
be isomorphisms, which imply right away that $f$ is an isomorphism.
\end{proof}

\begin{thm}\label{thm:caractminimal}
Let $X$ be a fibrant object of $\pref{A}$. The following conditions
are equivalent.
\begin{itemize}
\item[(i)] The object $X$ is a minimal complex.
\item[(ii)] Any trivial fibration of the form $X\to S$ is an isomorphim.
\item[(iii)] Any trivial cofibration of the form $S\to X$, with $S$ fibrant,
is an isomorphism.
\item[(iv)] Any weak equivalence $X\to S$, with $S$ a minimal complex,
is an isomorphism.
\item[(v)] Any weak equivalence $S\to X$, with $S$ a minimal complex,
is an isomorphism.
\end{itemize}
\end{thm}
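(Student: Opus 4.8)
The plan is to show that condition~(i) implies each of~(ii)--(v) and that each of~(ii)--(v) implies~(i) in turn. On the first side the engine is Lemma~\ref{lemma:fhomotopicidentityiso} together with Proposition~\ref{prop:weqminimaliso}; on the second side it is Theorem~\ref{thm:existenceminimal} (every fibrant object has a minimal model) together with Proposition~\ref{prop:retractfibtriv}. Throughout one uses that every object of $\pref{A}$ is cofibrant, so that any weak equivalence between fibrant objects is an $I$-homotopy equivalence, and that in the topos $\pref{A}$ a morphism which is at once a monomorphism and an epimorphism is an isomorphism.

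For (i)$\Rightarrow$(iv) and (i)$\Rightarrow$(v) there is nothing to do: these are exactly Proposition~\ref{prop:weqminimaliso}. For (i)$\Rightarrow$(ii), given a trivial fibration $p\colon X\to S$ one lifts the identity of $S$ through $p$ to obtain a section $s$; this exhibits $S$ as a retract of $X$, hence as a fibrant and cofibrant object, and since a strict section of an $I$-homotopy equivalence between cofibrant--fibrant objects is automatically a two-sided $I$-homotopy inverse, $sp$ is $I$-homotopic to $1_X$. Lemma~\ref{lemma:fhomotopicidentityiso} then makes $sp$ an isomorphism, whence $p$ is a monomorphism; as a trivial fibration is surjective on sections (it has the right lifting property against $\emptyset\to a$ for every representable $a$), $p$ is also an epimorphism, hence an isomorphism. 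For (i)$\Rightarrow$(iii), given a trivial cofibration $j\colon S\to X$ with $S$ fibrant, pick an $I$-homotopy inverse $q$ of $j$; then $jq$ is $I$-homotopic to $1_X$, hence an isomorphism by Lemma~\ref{lemma:fhomotopicidentityiso}, which forces $j$ to be an epimorphism; being a cofibration it is a monomorphism, hence an isomorphism.

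For the converse implications, assume $X$ is fibrant and satisfies one of~(ii)--(v). Using Theorem~\ref{thm:existenceminimal}, choose a minimal model $i\colon S\to X$ (a trivial cofibration with $S$ a minimal complex, in particular fibrant), and by Proposition~\ref{prop:retractfibtriv} choose a retraction $r\colon X\to S$ with $ri=1_S$, which is a trivial fibration. Then condition~(ii) applies to $r$ and makes it an isomorphism; condition~(iii) applies to $i$ and makes it an isomorphism; condition~(iv) applies to $r$ (a weak equivalence with target the minimal complex $S$) and makes it an isomorphism; condition~(v) applies to $i$ (a weak equivalence with source the minimal complex $S$) and makes it an isomorphism. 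In every case $i$ is an isomorphism --- if $r$ is, then $i=r^{-1}$ because $ri=1_S$ --- so $X$ is isomorphic to the minimal complex $S$ and is therefore a minimal complex itself. This closes all the equivalences.

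The only mildly delicate point is the step in~(i)$\Rightarrow$(ii) and~(i)$\Rightarrow$(iii) where one upgrades the conclusion of Lemma~\ref{lemma:fhomotopicidentityiso} (invertibility of a self-map $I$-homotopic to the identity) to invertibility of the comparison map itself: this requires observing that the relevant composite is a split monomorphism while the comparison map is an epimorphism (respectively a monomorphism), and invoking that a morphism of $\pref{A}$ which is both monic and epic is invertible. Everything else is formal, given the existence of minimal models and Propositions~\ref{prop:retractfibtriv} and~\ref{prop:weqminimaliso}.
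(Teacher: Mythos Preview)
Your proof is correct. The paper organizes the equivalences as a chain rather than a star: after getting (i)$\Leftrightarrow$(iv) and (i)$\Leftrightarrow$(v) from Proposition~\ref{prop:weqminimaliso}, it proves (v)$\Rightarrow$(iii)$\Rightarrow$(ii)$\Rightarrow$(i). For (v)$\Rightarrow$(iii) the paper observes that any fibrant subobject of a minimal complex is itself minimal (since $\bord$-equivalence in the subobject gives $\bord$-equivalence in the ambient object), so (v) applies directly; for (iii)$\Rightarrow$(ii) it simply takes a section of the trivial fibration. Your direct arguments for (i)$\Rightarrow$(ii) and (i)$\Rightarrow$(iii), using Lemma~\ref{lemma:fhomotopicidentityiso} together with the fact that $\pref{A}$ is balanced, are a perfectly good alternative that bypasses the subobject observation and makes the role of Lemma~\ref{lemma:fhomotopicidentityiso} more explicit. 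The converse implications (each of (ii)--(v) back to (i)) are handled identically in both proofs, via a minimal model and Proposition~\ref{prop:retractfibtriv}.
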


\begin{proof}
It follows immediately from Proposition \ref{prop:weqminimaliso}
that condition (i) is equivalent to condition (iv)
as well as to condition (v).
Therefore, condition (v) implies condition (iii):
if $i:S\to X$ is a trivial cofibration with $S$ fibrant and $X$ minimal,
then $S$ must be minimal as well, so that $i$ has to be an isomorphism.
Let us prove that condition (iii) implies condition (ii):
any trivial fibration $p:X\to S$ admits a section $i:S\to X$ which has to be a trivial
cofibration with fibrant domain, and thus an isomorphism.
It is now sufficient to prove that condition (ii) implies condition (i).
By virtue of Theorem \ref{thm:existenceminimal}, there exists a
minimal model of $X$, namely a trivial cofibration $S\to X$ with
$S$ a minimal complex. This cofibration has a retraction which,
by virtue of Proposition \ref{prop:retractfibtriv},
is a trivial fibration. Condition (ii) implies that $S$ is isomorphic to $X$, and
thus that $X$ is minimal as well.
\end{proof}

\begin{definition}
A fibration $p:X\to Y$ in $\pref{A}$ is \emph{minimal} if it is
a minimal complex as an object of $\pref{A}/Y=\pref{A/Y}$ for the induced
model category structure (whose, weak equivalences, fibrations and cofibrations
are the maps which have the corresponding property in $\pref{A}$, by
forgetting the base).
\end{definition}

\begin{prop}\label{prop:stabilityminfib}
The class of minimal fibrations is stable by pullback.
\end{prop}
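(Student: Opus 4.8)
The plan is to reduce the minimality of the pullback to that of $p$ by transporting sections and homotopies through the pullback square. Fix a morphism $g\colon Y'\to Y$ and form the pullback $p'\colon X'=X\times_{Y}Y'\to Y'$ of the minimal fibration $p\colon X\to Y$. First, $p'$ is a fibration, since fibrations are characterized by the right lifting property against trivial cofibrations and this class is stable under pullback; in particular $p'$ is a fibrant object of $\pref{A}/Y'=\pref{A/Y'}$, so it only remains to verify the defining property of a minimal complex for $p'$ in this slice.

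For the slices I would use the intervals $I\times Y\to Y$ in $\pref{A}/Y$ and $I\times Y'\to Y'$ in $\pref{A}/Y'$, where $I$ is the interval fixed in \ref{paragr:EZcat}: these are legitimate because for an object $W\to Y$ one has $(I\times Y)\times_{Y}W\cong I\times W$ and the projection $I\times W\to W$ is a weak equivalence in $\pref{A}$, hence in the slice --- and in any case, by Theorem \ref{thm:caractminimal}, being a minimal complex does not depend on the choice of interval. Now a representable object of $\pref{A/Y'}$ is a section $\alpha\colon a\to Y'$ with $a$ representable in $A$, and, by the universal property of the pullback, a section of $X'$ over $(a,\alpha)$ --- that is, a map $a\to X'$ over $Y'$ --- is exactly the same datum as a section $\bar s\colon a\to X$ over $(a,g\alpha)$. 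This bijection commutes with restriction along $\bord a\to a$, so two sections $s,t$ of $X'$ over $(a,\alpha)$ have the same boundary if and only if the corresponding sections $\bar s,\bar t$ do. Likewise, a homotopy $h\colon I\times a\to X'$ over $Y'$ which is constant on $\bord a$ corresponds to a map $\bar h\colon I\times a\to X$; the point to check is that $\bar h$ is a homotopy over $Y$, which holds because $p\bar h=g(p'h)$ is the composite of the projection $I\times a\to a$ with $g\alpha$, and $\bar h$ is again constant on $\bord a$ with $\bar h_e$ corresponding to $h_e$ for $e=0,1$.

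Combining these identifications: if $s,t\colon a\to X'$ are $\bord$-equivalent sections over some $(a,\alpha)$, then $\bar s,\bar t\colon a\to X$ are $\bord$-equivalent sections over $(a,g\alpha)$; since $p$ is minimal, $\bar s=\bar t$, whence $s=t$. Thus $X'$ is a minimal complex in $\pref{A}/Y'$, i.e. $p'$ is a minimal fibration. The only genuinely delicate step is the middle paragraph --- verifying that the chosen interval base-changes correctly and that a homotopy over $Y'$ becomes a homotopy over $Y$ under the pullback --- and it is precisely there that one uses that the square is a pullback rather than an arbitrary commutative square; once this bookkeeping is done, the conclusion is immediate.
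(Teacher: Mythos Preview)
Your proof is correct and follows the same approach as the paper's: transport $\bord$-equivalent sections of the pullback along the structure map into the original fibration, use minimality there, and conclude by the universal property of the pullback. The paper's version is considerably terser (it does not spell out the interval bookkeeping or the correspondence of homotopies), but the underlying argument is identical.
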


\begin{proof}
Consider a pullback square
$$\xymatrix{
X\ar[r]^u\ar[d]_p&X'\ar[d]^{p'}\\
Y\ar[r]^v&Y'
}$$
in which $p'$ is a minimal fibration.
Let $x,y:a\to X$ two global sections which are $\bord$-equivalent
over $Y$ (i.e. $\bord$-equivalent in $X$, seen as a fibrant object of $\pref{A/Y}$).
Then $u(x)$ and $u(y)$ are $\bord$-equivalent in $X'$ over $Y'$, and
thus $u(x)=u(y)$. As $p(x)=p(y)$, this means that $x=y$.
In other words, $p$ is a minimal fibration.
%
\end{proof}

Everything we proved so far about minimal complexes has its counterpart
in the language of minimal fibrations. Let us mention the properties
that we will use later.

\begin{thm}\label{thm:minimalfibrations}
For any fibration $p:X\to Y$, there exists a trivial fibration
$r:X\to S$ and a minimal fibration $q:S\to Y$ such that $p=qr$.
\end{thm}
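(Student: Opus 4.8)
The plan is to deduce Theorem~\ref{thm:minimalfibrations} from Theorem~\ref{thm:existenceminimal} by a base-change argument, exactly as one deduces the existence of minimal fibrations from the existence of minimal complexes in the classical simplicial setting. Given a fibration $p:X\to Y$, I would view $X$ as a fibrant object of the induced model category $\pref{A/Y}=\pref{A}/Y$ described in the definition preceding Proposition~\ref{prop:stabilityminfib}. The point to check first is that $\pref{A/Y}$ is itself the category of presheaves on an Eilenberg--Zilber category equipped with a model structure whose cofibrations are the monomorphisms, so that the entire theory of Section~2 applies verbatim over the base $Y$. Since $A/Y$ is the category of elements of $Y$, and the Reedy structure on $A$ lifts to $A/Y$ (with $(A/Y)_+$ and $(A/Y)_-$ the arrows whose underlying arrow in $A$ lies in $A_+$, resp.\ $A_-$), conditions (EZ1) and (EZ2) are inherited immediately: a section over $Y$ of a map in $(A/Y)_-$ is the same datum as a section in $A_-$ compatible with the structure maps to $Y$, and the existence/uniqueness statements carry over. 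One also needs that the induced model structure on $\pref{A/Y}$ has an interval with the required property; one takes $I\times Y\to Y$ (equivalently, the pullback of $I$ to the base), which still has the property that projecting it away is a weak equivalence over $Y$.

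With that in place, the proof is short. By Theorem~\ref{thm:existenceminimal} applied to the fibrant object $X$ of $\pref{A/Y}$, there is a minimal model, i.e.\ a trivial cofibration $i:S\to X$ in $\pref{A/Y}$ with $S$ a minimal complex over $Y$. Unwinding the definitions, the structure map $q:S\to Y$ is then a minimal fibration in the sense of the Definition preceding Proposition~\ref{prop:stabilityminfib}, and $i$ is a trivial cofibration of $\pref{A}$ over $Y$. Since $i$ is a trivial cofibration with fibrant domain $S$ (as $S$ is a minimal complex, hence fibrant, over $Y$, hence fibrant in $\pref{A}$), it admits a retraction $r:X\to S$ with $ri=1_S$; by Proposition~\ref{prop:retractfibtriv} applied in $\pref{A/Y}$, this retraction $r$ is a trivial fibration. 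Finally $r$ is a map over $Y$, so $qr=p$, and the factorization $p=qr$ with $r$ a trivial fibration and $q$ a minimal fibration is exactly what is claimed.

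The one genuinely substantive step is the verification that $\pref{A/Y}$, with its induced model structure, is an instance of the setup of Section~2 — that is, that $A/Y$ is an Eilenberg--Zilber category and that the induced model structure has monomorphisms as cofibrations (this last point is standard, since a map in $\pref{A}/Y$ is a monomorphism precisely when it is one in $\pref{A}$, and trivial cofibrations likewise are detected after forgetting the base) together with a suitable interval. Everything else is a transcription of results already proved over a general Eilenberg--Zilber category. I expect the Reedy-structure bookkeeping on $A/Y$ to be the only place where a little care is needed; once it is granted, Theorem~\ref{thm:existenceminimal} and Proposition~\ref{prop:retractfibtriv} do all the work. I would phrase the whole argument as: ``Apply Theorem~\ref{thm:existenceminimal} and Proposition~\ref{prop:retractfibtriv} to the fibrant object $X$ of $\pref{A/Y}$,'' after noting that $A/Y$ is again an Eilenberg--Zilber category and the induced model structure on $\pref{A/Y}$ satisfies the running hypotheses of the section.
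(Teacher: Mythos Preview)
Your proposal is correct and follows essentially the same approach as the paper: apply Theorem~\ref{thm:existenceminimal} to $p$ viewed as a fibrant object of $\pref{A/Y}$ to obtain a minimal model $i:S\to X$, then invoke Proposition~\ref{prop:retractfibtriv} in $\pref{A/Y}$ to see that a retraction of $i$ is a trivial fibration. The paper's proof is terser and takes the base-change bookkeeping (that $A/Y$ is again Eilenberg--Zilber and that the induced model structure satisfies the running hypotheses) for granted, whereas you spell this out; but the mathematical content is the same.
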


\begin{proof}
By virtue of Theorem \ref{thm:existenceminimal} applied to $p$,
seen as a fibrant presheaf over $A/Y$, there exists a trivial
cofibration $i:S\to X$ such that $q=p_{|_S}:S\to Y$ is a minimal
fibration. As both $X$ and $S$ are fibrant (as presheaves over $A/Y$),
the embedding $i$ is a strong deformation retract, so that, by
virtue of Proposition \ref{prop:retractfibtriv} (applied again
in the context of presheaves over $A/Y$), there exists
a trivial fibration $r:X\to S$ such that $ri=1_S$, and
such that $qr=p$.
\end{proof}

\begin{rem}
In the factorisation $p=qr$ given by the preceding theorem,
$q$ is necessarily a retract of $p$. Therefore, if $p$ belongs
to a class of maps which is stable under retracts, the
minimal fibration must have the same property.
Similarly, as $r$ is a trivial fibration, if $p$ belongs to a class
which is defined up to weak equivalences, then so does $q$.
This means that this theorem can be used to study classes
of fibrations which are more general than classes of fibrations
of model category structures.
\end{rem}

\begin{prop}\label{prop:weqminfibrationiso}
For any minimal fibrations $p:X\to Y$ and $p':X'\to Y$,
any weak equivalence $f:X\to X'$ such that $p'f=p$ is an isomorphism.
\end{prop}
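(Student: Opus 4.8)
The plan is to recognise this statement as the relative form of Proposition~\ref{prop:weqminimaliso} and to deduce it by applying that proposition inside the slice model category $\pref{A}/Y = \pref{A/Y}$. The first thing I would check is that the framework fixed in \S\ref{paragr:EZcat} makes sense for this slice. The category of elements $A/Y$ is again an Eilenberg--Zilber category: it inherits the degree function $\deg(a,s) = \deg(a)$ and the factorisation system from $A$ (the intermediate object of the Reedy factorisation $\alpha = \alpha_{+}\alpha_{-}$ of a morphism $(a,s)\to(b,t)$ is forced to carry the section $\alpha_{+}^{*}t$, which also gives the uniqueness of the factorisation); a section in $A$ of the underlying split epimorphism of a map of $(A/Y)_{-}$ automatically respects the chosen sections, so (EZ1) holds, and in fact the sections of such a map in $A/Y$ are exactly its sections in $A$, whence (EZ2) for $A$ yields (EZ2) for $A/Y$. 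The induced model structure on $\pref{A/Y}$ has the monomorphisms as cofibrations, and $Y\times I\to Y$, with its two endpoints $Y\times\bord I$, serves as an interval there, since $(Y\times I)\times_{Y}Z = I\times Z$ and $I\times Z\to Z$ is a weak equivalence for every $Z$ over $Y$.

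Once this is granted, every result of the present section applies to $\pref{A/Y}$, and the proof is immediate. By definition of a minimal fibration, $X$ and $X'$ are minimal complexes in $\pref{A/Y}$; the equality $p'f = p$ says precisely that $f$ is a morphism of $\pref{A/Y}$; and the weak equivalences of the induced structure are the weak equivalences of $\pref{A}$. Thus $f$ is a weak equivalence between minimal complexes in $\pref{A/Y}$, and Proposition~\ref{prop:weqminimaliso}, read in $\pref{A/Y}$, shows that $f$ is an isomorphism of presheaves over $Y$, hence an isomorphism of presheaves on $A$.

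The only step that is not a pure translation is the verification that $A/Y$ is Eilenberg--Zilber (and carries a convenient interval); as sketched above this is entirely routine, and it is exactly what legitimises the slogan of the remark preceding the statement. If one wishes to avoid passing to the slice altogether, one can instead copy the proof of Proposition~\ref{prop:weqminimaliso}: since $X$ and $X'$ are fibrant and cofibrant over $Y$, the weak equivalence $f$ admits a homotopy inverse $g:X'\to X$ with $pg = p'$, such that $gf$ and $fg$ are homotopic over $Y$ to the respective identities; the relative analogue of Lemma~\ref{lemma:fhomotopicidentityiso}, whose proof runs the same induction on dimension with Proposition~\ref{prop:weakbordequiv} applied fibrewise over $Y$, then forces $gf$ and $fg$ to be isomorphisms, so that $f$ is an isomorphism.
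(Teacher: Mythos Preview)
Your proposal is correct and follows exactly the paper's approach: the paper's entire proof reads ``This is a reformulation of Proposition~\ref{prop:weqminimaliso} in the context of presheaves over $A/Y$.'' You have simply supplied the routine verifications (that $A/Y$ is Eilenberg--Zilber and carries a suitable interval) that the paper leaves implicit.
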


\begin{proof}
This is a reformulation of Proposition \ref{prop:weqminimaliso}
in the context of presheaves over $A/Y$.
\end{proof}

\begin{lemma}\label{lemma:extensiontrivfib}
For any cofibration $v:Y\to Y'$ and any trivial fibration $p:X\to Y$, there exists
a trivial fibration $p':X'\to Y'$ and a pullback square of the following form.
$$\xymatrix{
X\ar[r]^u\ar[d]_p&X'\ar[d]^{p'}\\
Y\ar[r]^v&Y'
}$$
\end{lemma}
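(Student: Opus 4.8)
The plan is to realize $X'$ as the \emph{pushforward} of $p$ along $v$, exploiting that $\pref{A}$ is a topos, hence locally cartesian closed. Write $v^{*}\colon\pref{A}/Y'\to\pref{A}/Y$ for the pullback functor along $v$. Since $\pref{A}/Y'=\pref{A/Y'}$ is again a presheaf category and $v^{*}$ is restriction along the functor $A/Y\to A/Y'$ induced by $v$ on categories of elements, the functor $v^{*}$ admits a right adjoint $v_{*}\colon\pref{A}/Y\to\pref{A}/Y'$ (right Kan extension). I would then take $p'\colon X'\to Y'$ to be $v_{*}$ applied to the object $p\colon X\to Y$ of $\pref{A}/Y$, and $u\colon X\to X'$ to be the composite $X\xrightarrow{\ \sim\ }v^{*}v_{*}X\to X'$, where the first arrow is the inverse of the canonical comparison map $v^{*}v_{*}X\to X$ (shown to be invertible below) and the second is the projection coming from the very definition of $v^{*}$ as a pullback.

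The first thing to verify is that the square of the statement is cartesian. By construction $v^{*}(p')$ \emph{is} the pullback of $p'$ along $v$, so it suffices to see that the comparison map $v^{*}v_{*}X\to X$ is an isomorphism in $\pref{A}/Y$, i.e.\ that the counit $v^{*}v_{*}\to\mathrm{id}$ is invertible. This is exactly where the hypothesis that $v$ is a monomorphism is used: the induced functor $A/Y\to A/Y'$ on categories of elements is then fully faithful (any morphism over $Y'$ between objects coming from $Y$ automatically lies over $Y$, since $v$ is injective on sections), and restriction along a fully faithful functor followed by a Kan extension is canonically isomorphic to the identity. Hence the pullback of $p'$ along $v$ is identified with $p$, and the square
$$\xymatrix{
X\ar[r]^{u}\ar[d]_{p}&X'\ar[d]^{p'}\\
Y\ar[r]^{v}&Y'
}$$
is a pullback.

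It then remains to check that $p'=v_{*}(p)$ is a trivial fibration. In the situation of \ref{paragr:EZcat} the cofibrations are precisely the monomorphisms, so a trivial fibration is exactly a map having the right lifting property against every monomorphism. Given a monomorphism $j\colon K\to L$ and a commutative square with sides $j$ and $p'$, the bottom edge $L\to Y'$ turns $j$ into a morphism of $\pref{A}/Y'$, and transposing the square along the adjunction $v^{*}\dashv v_{*}$ yields a lifting problem between $v^{*}(j)\colon v^{*}K\to v^{*}L$ and $p$ inside $\pref{A}/Y$. Pullbacks preserve monomorphisms, so $v^{*}(j)$ is again a monomorphism, hence a cofibration; since $p$ is a trivial fibration this problem has a solution, whose transpose solves the original one. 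This proves the lemma.

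I do not anticipate a real obstacle: the only inputs are that $\pref{A}$ is locally cartesian closed (so that $v_{*}$ exists) and that $v$ being a monomorphism forces $v^{*}v_{*}\cong\mathrm{id}$ — both standard — together with a routine transposition argument relying on the fact that trivial fibrations are detected by lifting against monomorphisms. Should one prefer a more hands-on argument, one can instead write the monomorphism $v$ as a transfinite composite of pushouts of boundary inclusions $\bord a\to a$ and extend $p$ one cell at a time; but the pushforward construction above is shorter and has the merit of producing a canonical extension.
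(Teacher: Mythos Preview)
Your argument is correct and is exactly the paper's proof (what the author calls ``Joyal's trick''): set $p'=v_*(p)$, use that $v^*v_*\cong\mathrm{id}$ because $v$ is a monomorphism, and conclude that $v_*$ preserves trivial fibrations since its left adjoint $v^*$ preserves monomorphisms. The only cosmetic slip is the phrase ``restriction along a fully faithful functor followed by a Kan extension'': what you need (and correctly stated just before) is that the counit $v^*v_*\to\mathrm{id}$ is invertible, i.e.\ \emph{Kan extension followed by restriction} is the identity.
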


\begin{proof}
We use Joyal's trick. The pullback functor $v^*:\pref{A}/Y'\to\pref{A}/Y$
has a left adjoint $v_!$ and a right adjoint $v_*$.
We see right away that $v^*v_!$ is isomorphic to the identity
(i.e. that $v_!$ is fully faithful), so that, by transposition,
$v^*v_*$ is isomorphic to the identity as well.
Moreover, the functor $v_*$ preserves trivial fibrations because
its left adjoint $v^*$ preserves monomorphisms. We define the trivial
fibration $p'$ as $v_*(p:X\to Y)$.
\end{proof}

\begin{prop}\label{prop:extensionweakequivcof}
Consider a commutative diagram of the form
$$\xymatrix{
X_0\ar[r]^w\ar[dr]_{p_0}&X_1\ar[r]^{i_1}\ar[d]^{p_1}&X'_1\ar[d]^{p'_1}\\
&Y\ar[r]^j&Y'
}$$
in which $p_0$, $p_1$ and $p'_1$ are fibrations, $w$ is a weak
equivalence, $j$ is a cofibration, and the square is cartesian. Then there exists
a cartesian square
$$\xymatrix{
X_0\ar[r]^{i_0}\ar[d]^{p_0}&X'_0\ar[d]^{p'_0}\\
Y\ar[r]^j&Y'
}$$
in which $p_0$ is a fibration, as well as a weak equivalence
$w':X'_0\to X'_1$ such that $p'_1w=p'_0$ and $i_1w=w'i_0$.
\end{prop}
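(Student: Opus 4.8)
The plan is to use the theory of minimal fibrations to rigidify the situation, thereby reducing the construction to a single application of Lemma~\ref{lemma:extensiontrivfib}, and then to repair the (unavoidable) lack of strict compatibility with $w$ by a homotopy‑extension argument.

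First I would factor $p'_1$ as in Theorem~\ref{thm:minimalfibrations}: write $p'_1=q'_1r'_1$ with $r'_1\colon X'_1\to S'_1$ a trivial fibration and $q'_1\colon S'_1\to Y'$ a minimal fibration. Pulling this factorization back along $j$ (using the hypothesis $X_1=j^*X'_1$) yields a trivial fibration $r_1\colon X_1\to S_1$ and a minimal fibration $q_1\colon S_1\to Y$ with $p_1=q_1r_1$ (minimality is preserved by Proposition~\ref{prop:stabilityminfib}), together with a cartesian square relating $S_1$ to $S'_1$ over $j$; write $\iota\colon S_1\to S'_1$ for its upper horizontal map — a monomorphism, being a pullback of $j$ — which identifies $S_1$ with $j^*S'_1$ and satisfies $\iota\,r_1=r'_1\,i_1$. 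Similarly factor $p_0=q_0r_0$ with $r_0\colon X_0\to S_0$ a trivial fibration and $q_0\colon S_0\to Y$ a minimal fibration, and choose a section $s_0$ of $r_0$ (automatically a map over $Y$). Then $\theta:=r_1\,w\,s_0\colon S_0\to S_1$ is a weak equivalence over $Y$ between two minimal fibrations over $Y$, hence an \emph{isomorphism} by Proposition~\ref{prop:weqminfibrationiso}. Consequently $\tilde r_0:=\theta r_0\colon X_0\to S_1$ is again a trivial fibration over $Y$, and it is homotopic over $Y$ to $r_1w$, because $\tilde r_0=r_1\,w\,(s_0r_0)$ and $s_0r_0$ is homotopic over $S_0$ to the identity of $X_0$ (as $r_0$ is a trivial fibration). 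The outcome of this first phase is that $X_0$ and $X_1$ are both exhibited, over $Y$, as trivial‑fibration covers of the \emph{same} minimal fibration $S_1$, and $S'_1$ is a canonical extension of $S_1$ over $Y'$.

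Next I would apply Lemma~\ref{lemma:extensiontrivfib} to the monomorphism $\iota$ and the trivial fibration $\tilde r_0$, obtaining a trivial fibration $\tilde r'_0\colon X'_0\to S'_1$, a map $i_0\colon X_0\to X'_0$ (a monomorphism, being a pullback of $\iota$), and a cartesian square with $\tilde r'_0\,i_0=\iota\,\tilde r_0$ and $j^*\tilde r'_0=\tilde r_0$. Set $p'_0:=q'_1\tilde r'_0\colon X'_0\to Y'$; this is a fibration (a trivial fibration followed by a minimal fibration), it restricts to $p_0$ over $Y$, and pasting the two cartesian squares shows that the square with vertices $X_0,X'_0,Y,Y'$ and vertical maps $p_0,p'_0$ is cartesian. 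It remains to build the weak equivalence $w'\colon X'_0\to X'_1$ extending $w$. The naive attempt — lifting $i_1w$ along $i_0$ through $p'_1$, or demanding $r'_1w'=\tilde r'_0$ — fails, since over $Y$ that would force $r_1w=\tilde r_0$, which only holds up to homotopy. So I would first modify $\tilde r'_0$: the maps $r'_1\,i_1\,w$ and $\tilde r'_0\,i_0$ from $X_0$ to $S'_1$ equal $\iota\,r_1\,w$ and $\iota\,\tilde r_0$ respectively, hence are homotopic over $Y'$; since $i_0$ is a cofibration and $S'_1$ is fibrant over $Y'$, the homotopy‑extension property lets me replace $\tilde r'_0$ by a map $\tilde r''_0\colon X'_0\to S'_1$ which is homotopic to it over $Y'$ (so still a weak equivalence, and still satisfying $q'_1\tilde r''_0=p'_0$) and which agrees strictly with $r'_1\,i_1\,w$ on $X_0$, i.e. $\tilde r''_0\,i_0=r'_1\,i_1\,w$. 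Now the square with top $i_1w$, left $i_0$, bottom $\tilde r''_0$ and right $r'_1$ commutes on the nose; since $i_0$ is a cofibration and $r'_1$ is a \emph{trivial} fibration, it admits a lift $w'\colon X'_0\to X'_1$ with $w'\,i_0=i_1w$ and $r'_1\,w'=\tilde r''_0$. Then $w'$ is a weak equivalence by two‑out‑of‑three, and $p'_1w'=q'_1r'_1w'=q'_1\tilde r''_0=p'_0$, which is everything required.

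The step I expect to be the crux is this last modification: the weak equivalence $w$ interacts with the minimal fibrations $S_0$ and $S_1$ only up to homotopy, so before any lift can be performed one must absorb that homotopy into a change of the classifying map $\tilde r'_0$ while simultaneously keeping control of its restriction to $X_0$ and of its composite down to $Y'$. Once the relevant square is made strictly commutative, producing $w'$ is immediate because $r'_1$ is a trivial fibration. All the rigidity that makes this possible is concentrated in Proposition~\ref{prop:weqminfibrationiso} — minimal fibrations admit no non‑trivial self‑equivalences over the base — which is exactly what forces $\theta$ to be an isomorphism.
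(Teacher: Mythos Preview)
Your argument is correct and follows the same overall strategy as the paper's proof (factor $p'_1$ through a minimal fibration, pull back along $j$, apply Lemma~\ref{lemma:extensiontrivfib}, then lift against a trivial fibration), but the paper sidesteps what you identify as ``the crux''. Instead of factoring $p_0$ separately and working with $\tilde r_0=\theta r_0$, which is only \emph{homotopic} to $r_1w$, the paper observes that $r_1w$ is \emph{itself} a trivial fibration: if $u\colon T\to X_0$ is any minimal model over $Y$, then $r_1wu$ is a weak equivalence between minimal fibrations over $Y$, hence an isomorphism by Proposition~\ref{prop:weqminfibrationiso}, so $r_1w$ is isomorphic to a retraction of $u$ and is therefore a trivial fibration by Proposition~\ref{prop:retractfibtriv}. (Your own setup already contains this: $s_0$ is a minimal model of $X_0$ over $Y$, and $\theta^{-1}r_1w$ is a retraction of it.) Setting $r_0:=r_1w$ rather than $\tilde r_0$, one applies Lemma~\ref{lemma:extensiontrivfib} to $\iota$ and $r_0$ to obtain $r'_0\colon X'_0\to S'_1$, and then the square with top $i_1w$, left $i_0$, right $r'_1$, bottom $r'_0$ commutes \emph{strictly} (since $r'_1i_1w=\iota r_1w=\iota r_0=r'_0i_0$), so a single lift against the trivial fibration $r'_1$ produces $w'$. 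Your homotopy-extension manoeuvre is valid but an avoidable detour.
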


\begin{proof}
By virtue of Theorem \ref{thm:minimalfibrations}, we can
choose a trivial fibration $r'_1:X'_1\to S'$ and
a minimal fibration $q':S\to Y'$ such that $p'_1=q'r'_1$.
Let us write $S=Y\times_{Y'}S'$, and $k:S\to S'$
for the second projection. The canonical map
$r_1:X_1\to S$ is a trivial fibration (being the pullback
of such a thing), and the projection $q:S\to Y$ is a minimal
fibration by Proposition \ref{prop:stabilityminfib}. We have
thus a factorisation $p_1=qr_1$.
Moreover, the map $r_0=r_1w$ is a trivial fibration.
To see this, let us choose a minimal model $u: T\to X_0$.
Then the map $r_1wu$ is a weak equivalence between
minimal fibrations and is thus an isomorphism by
Proposition \ref{prop:weqminfibrationiso}.
This means that $r_0$ is isomorphic to a retraction of
the map $u$, and is therefore a trivial fibration
by Proposition \ref{prop:retractfibtriv}.
The diagram we started from now has the following form.
$$\xymatrix{
X_0\ar[r]^w\ar[dr]_{r_0}&X_1\ar[r]^{i_1}\ar[d]^{r_1}&X'_1\ar[d]^{r'_1}\\
&S\ar[r]^k\ar[d]^q&S'\ar[d]^{q'}\\
&Y\ar[r]^j&Y'
}$$
Moreover, both squares are cartesian.
This means that we can replace $j$ by $k$. In other words,
without loss of generality, it is sufficient to prove the proposition
in the case where $p_0$, $p_1$ and $p'_1$ are trivial fibrations.
Under these additional assumptions, we obtain a cartesian square
$$\xymatrix{
X_0\ar[r]^{i_0}\ar[d]^{p_0}&X'_0\ar[d]^{p'_0}\\
Y\ar[r]^j&Y'
}$$
in which $p'_0$ is a trivial fibration by Lemma \ref{lemma:extensiontrivfib}.
The lifting problem
$$\xymatrix{
X_0\ar[d]^{i_0}\ar[r]^w&X_1\ar[r]^{i_1}&X'_1\ar[d]^{p'_1}\\
X'_0\ar[rr]^{p'_0}\ar@{..>}[urr]^{w'}&&Y'
}$$
has a solution because $i_0$ is a cofibration and $p'_1$
a trivial fibration. Moreover, any lift $w'$ must be
a weak equivalence because both $p'_0$ and $p'_1$
are trivial fibrations.
\end{proof}

\begin{rem}
One can prove the preceding proposition in a much
greater generality, without using the theory of minimal fibrations:
the proof of \cite[Theorem 3.4.1]{KLV} can be carried out in any topos
endowed with a model category structure whose cofibrations precisely are
the monomorphisms.
\end{rem}

\begin{lemma}\label{lemma:extensionminimalfib}
Assume that the model category structure on $\pref A$ is right proper.
For any trivial cofibration $v:Y\to Y'$ and any minimal fibration $p:X\to Y$, there exists
a minimal fibration $p':X'\to Y'$ and a pullback square of the following form.
$$\xymatrix{
X\ar[r]^u\ar[d]_p&X'\ar[d]^{p'}\\
Y\ar[r]^v&Y'
}$$
\end{lemma}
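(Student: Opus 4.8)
The plan is to extend $p$ to \emph{some} fibration over $Y'$, then to make that fibration minimal by means of Theorem~\ref{thm:minimalfibrations}, and finally to observe that pulling the resulting minimal fibration back along $v$ recovers $p$ not merely up to weak equivalence but up to isomorphism, thanks to Proposition~\ref{prop:weqminfibrationiso}.

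Concretely, first factor the composite $vp:X\to Y'$ as a trivial cofibration $a:X\to E$ followed by a fibration $b:E\to Y'$. Then apply Theorem~\ref{thm:minimalfibrations} to $b$, obtaining a trivial fibration $r:E\to S'$ together with a minimal fibration $q:S'\to Y'$ such that $b=qr$. Put $X'=S'$ and $p'=q$; I claim these are as required.

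It remains to identify $p$ with the pullback of $p'=q$ along $v$. Write $v^{*}S'=Y\times_{Y'}S'$, with its structure map $v^{*}S'\to Y$, which is a minimal fibration by Proposition~\ref{prop:stabilityminfib}. Since $q(ra)=ba=vp$, the pair $(p,ra)$ defines a map $\psi:X\to v^{*}S'$ over $Y$ whose composite with the projection $v^{*}S'\to S'$ is $ra$. Now $ra$ is a weak equivalence, being a composite of the trivial cofibration $a$ with the trivial fibration $r$; and the projection $v^{*}S'\to S'$ is the pullback of the weak equivalence $v$ along the fibration $q$, hence a weak equivalence because the model structure is right proper. By the two-out-of-three property, $\psi$ is a weak equivalence. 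Thus $\psi$ is a weak equivalence over $Y$ between two minimal fibrations over $Y$ — the source $X$ by hypothesis, the target $v^{*}S'$ by Proposition~\ref{prop:stabilityminfib} — so Proposition~\ref{prop:weqminfibrationiso} forces $\psi$ to be an isomorphism. This identifies $X$, compatibly with its map $p$ to $Y$, with $Y\times_{Y'}X'$, which is precisely the pullback square asserted by the lemma (with top horizontal map $ra:X\to X'$).

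The substance of the argument is this interaction of the two factorisations; the single place where right properness is used is to see that $v^{*}S'\to S'$ is a weak equivalence, and the only point requiring a little care is checking that every comparison map in sight is a map over $Y$, so that Proposition~\ref{prop:weqminfibrationiso} indeed applies. (Alternatively, one could first extend $p$ along $v$ to a fibration over $Y'$ realising it as a \emph{strict} pullback — for instance via Proposition~\ref{prop:extensionweakequivcof}, applied to the analogous weak equivalence $X\to Y\times_{Y'}E$ over $Y$ — and minimalise afterwards, but the route above is more direct.)
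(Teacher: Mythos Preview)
Your proof is correct and follows essentially the same route as the paper's: factor $vp$ as a trivial cofibration followed by a fibration, minimalise the fibration via Theorem~\ref{thm:minimalfibrations}, then use right properness plus two-out-of-three to see that the comparison map $X\to Y\times_{Y'}X'$ is a weak equivalence between minimal fibrations over $Y$, hence an isomorphism by Proposition~\ref{prop:weqminfibrationiso}. Your write-up is in fact slightly more explicit than the paper's about why the comparison map is a weak equivalence.
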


\begin{proof}
Let us factor the map $vp$ as a trivial cofibration $u':X\to X''$
followed by a fibration $p'':X''\to Y'$. By virtue of Theorem \ref{thm:minimalfibrations},
we can factor $p''$ into a trivial fibration $q:X''\to X'$ followed by
a minimal fibration $p':X'\to Y'$. We thus get a commutative square
$$\xymatrix{
X\ar[r]^u\ar[d]_p&X'\ar[d]^{p'}\\
Y\ar[r]^v&Y'
}$$
with $u=qu'$. The projection $Y\times_{Y'}X'\to Y$ is a minimal fibration (Proposition \ref{prop:stabilityminfib}).
On the other hand, the model category structure being right proper, the
comparison map $X\to Y\times_{Y'}X'$ is a weak equivalence over $Y$.
Therefore, Proposition \ref{prop:weqminfibrationiso} implies that this comparison
map is an isomorphism, and thus that this commutative square is cartesian.
\end{proof}

\begin{prop}\label{prop:extensionfib}
Assume that the model category structure on $\pref A$ is right proper.
For any trivial cofibration $v:Y\to Y'$ and any fibration $p:X\to Y$, there exists
a fibration $p':X'\to Y'$ and a pullback square of the following form.
$$\xymatrix{
X\ar[r]^u\ar[d]_p&X'\ar[d]^{p'}\\
Y\ar[r]^v&Y'
}$$
\end{prop}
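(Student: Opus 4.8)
The strategy is to reduce to the case of minimal fibrations, where the extension result is already available (Lemma \ref{lemma:extensionminimalfib}), and then to propagate the extension back along a trivial-fibration factorisation. First I would apply Theorem \ref{thm:minimalfibrations} to $p:X\to Y$, obtaining a factorisation $p=qr$ with $r:X\to S$ a trivial fibration and $q:S\to Y$ a minimal fibration. Since the model structure is right proper, Lemma \ref{lemma:extensionminimalfib} applies to the trivial cofibration $v:Y\to Y'$ and the minimal fibration $q$, producing a minimal fibration $q':S'\to Y'$ together with a pullback square exhibiting $q$ as the pullback of $q'$ along $v$. It then remains to extend the trivial fibration $r:X\to S$ along the induced map $S\to S'$; but that map is again a trivial cofibration (being the base change, along $q$, of the trivial cofibration $v$... more precisely, $S=Y\times_{Y'}S'$ and right properness plus Proposition \ref{prop:weqminfibrationiso} identify the comparison map as the canonical inclusion, which is a trivial cofibration because it is the pullback of a weak equivalence along the fibration $q'$ and is also mono).

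So without loss of generality I may assume from the start that $p:X\to Y$ is a trivial fibration; equivalently, I split off the two pieces and handle the minimal part by Lemma \ref{lemma:extensionminimalfib} and the trivial-fibration part by Lemma \ref{lemma:extensiontrivfib}, which gives a trivial fibration $p':X'\to Y'$ and a pullback square over $v$. Concretely, writing $S\to S'$ for the trivial cofibration constructed above and applying Lemma \ref{lemma:extensiontrivfib} to it together with $r:X\to S$, I obtain a trivial fibration $r':X'\to S'$ with $X=S\times_{S'}X'$. Composing, I set $p'=q'r':X'\to Y'$, a fibration of $\pref A$, and the outer rectangle of the two stacked pullback squares
$$\xymatrix{
X\ar[r]\ar[d]_r&X'\ar[d]^{r'}\\
S\ar[r]\ar[d]_q&S'\ar[d]^{q'}\\
Y\ar[r]^v&Y'
}$$
is the desired pullback square exhibiting $p$ as the base change of $p'$ along $v$.

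The main obstacle is making precise the claim that the canonical map $S\to S'=Y\times_{Y'}S'$... rather, that the comparison map $X\to Y\times_{Y'}X'$ (or at the intermediate stage $S\to $ its counterpart) is a trivial cofibration rather than merely a weak equivalence, so that Lemma \ref{lemma:extensiontrivfib} genuinely applies. This is exactly where right properness and the minimality machinery earn their keep: right properness guarantees the comparison map is a weak equivalence over the base, Proposition \ref{prop:weqminfibrationiso} (for the minimal piece) or a direct argument forces it to be the identity inclusion, and monomorphy is automatic since both sides sit inside the same ambient presheaf. Once that bookkeeping is in place the rest is a routine diagram chase with pasted pullback squares, using only that trivial fibrations and fibrations are each stable under composition and base change.
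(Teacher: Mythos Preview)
Your approach is exactly the paper's: factor $p=qr$ via Theorem~\ref{thm:minimalfibrations}, extend the minimal piece $q$ along $v$ using Lemma~\ref{lemma:extensionminimalfib}, then extend the trivial fibration $r$ along the resulting map $S\to S'$ using Lemma~\ref{lemma:extensiontrivfib}, and paste the two pullback squares.

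The ``main obstacle'' you raise is not an obstacle at all, and your discussion of it is the only muddled part of the proposal. Lemma~\ref{lemma:extensiontrivfib} requires only that the base map be a \emph{cofibration}, not a trivial cofibration; reread its statement. The map $S\to S'$ is a monomorphism simply because it is the pullback of the monomorphism $v:Y\to Y'$ along $q':S'\to Y'$, and monomorphisms are stable under pullback in any topos. No appeal to right properness, to Proposition~\ref{prop:weqminfibrationiso}, or to any weak-equivalence argument is needed at this step. (Right properness has already done its work inside the proof of Lemma~\ref{lemma:extensionminimalfib}.) Also, your parenthetical ``$S\to S'=Y\times_{Y'}S'$'' is garbled: it is $S$ that equals $Y\times_{Y'}S'$, by the conclusion of Lemma~\ref{lemma:extensionminimalfib}, so there is no separate comparison map to worry about---the map $S\to S'$ is just the second projection of that pullback. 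Once you drop this spurious difficulty, your argument is clean and matches the paper's two-line proof.
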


\begin{proof}
By virtue of Theorem \ref{thm:minimalfibrations},
there exists a factorisation of $p$ as $p=qr$ with $r$
a trivial fibration and $q$ a minimal fibration.
We can extend $q$ and then $r$, using Lemmata
\ref{lemma:extensionminimalfib} and \ref{lemma:extensiontrivfib}
successively.
\end{proof}

\section{Second construction: extension of Voevodsky's proof}

\begin{paragr}
Let $A$ be a small category.
A class of presheaves $\C$ on $A$ is \emph{saturated by monomorphisms}
is it satisfies the following properties.
\begin{itemize}
\item[(a)] The empty presheaf is in $\C$.
\item[(b)] For any pushout square
$$\xymatrix{
X\ar[r]^u\ar[d]_i&X'\ar[d]^{i'}\\
Y\ar[r]^v&Y'
}$$
in which $X$, $X'$ and $Y$ are in $\C$ and $i$ is a monomorphism, then $Y'$ is in $\C$.
\item[(c)] For any well ordered set $\alpha$ and any functor $X:\alpha\to\pref{A}$
such that the natural map $X_i\to X_j$ is a monomorphism for any $i<j$ in $\alpha$,
if $X_i$ is in $\C$ for any $i\in\alpha$, then $\varinjlim_{i\in\alpha}X_i$ is in $\C$.
\item[(d)] Any retract of an object in $\C$ is in $\C$.
\end{itemize}
\end{paragr}

\begin{prop}\label{prop:saturatedclass}
If $A$ is an elegant Reedy category, any class $\C$ of presheaves on $A$ which
is saturated by monomorphisms and contains
the representable presheaves contains all the presheaves on $A$.
\end{prop}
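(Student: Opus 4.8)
The plan is to build an arbitrary presheaf $X$ on $A$ out of the representables by the skeletal filtration of Reedy theory, using elegance to control each individual cell attachment, and then to feed this into the saturation axioms.

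First I would record the purely formal fact that a class satisfying (a), (b) and (c) is closed under arbitrary coproducts: a binary coproduct $Y\amalg Z$ is the pushout of $Y\leftarrow\emptyset\to Z$ along the monomorphism $\emptyset\to Z$, hence lies in $\C$ by (a) and (b); an arbitrary set-indexed coproduct of objects of $\C$ is then a transfinite composition of monomorphisms built from such binary coproducts, hence lies in $\C$ by (c). Next, recall that, $A$ being a Reedy category, every presheaf $X$ carries its skeletal filtration $\emptyset=\mathrm{Sk}_{-1}X\subset\mathrm{Sk}_0X\subset\mathrm{Sk}_1X\subset\cdots$, where $\mathrm{Sk}_nX$ is the subobject generated by the sections whose source has degree $\leq n$, and $X=\varinjlim_{n\in\nat}\mathrm{Sk}_nX$. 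The one substantial input — and the point where elegance is used — is that for an elegant Reedy category the inclusion $\mathrm{Sk}_{n-1}X\to\mathrm{Sk}_nX$ is the attachment of the non-degenerate sections of degree $n$ along their boundaries, i.e.\ there is a pushout square
$$\xymatrix{
\coprod\bord a\ar[r]\ar[d]&\mathrm{Sk}_{n-1}X\ar[d]\\
\coprod a\ar[r]&\mathrm{Sk}_nX
}$$
in which both coproducts run over the non-degenerate sections $a\to X$ with $\deg a=n$ (one for each isomorphism class of sources), and the left-hand vertical map is the coproduct of the boundary inclusions $\bord a\to a$, hence a monomorphism. For this I would invoke the theory of elegant Reedy categories of Bergner and Rezk (see \cite{BeRe}); for a general Reedy category one would have to replace $\coprod\bord a$ by the latching objects, and the comparison map need not even be a monomorphism, so this is exactly where the hypothesis is spent.

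Granting these preliminaries, the proof is a short induction on $n\geq-1$ of the assertion that $\mathrm{Sk}_nX\in\C$ for every presheaf $X$ on $A$. The case $n=-1$ is axiom (a). For the inductive step, note that for a representable $a$ of degree $n$ the boundary $\bord a$ is precisely $\mathrm{Sk}_{n-1}(a)$, which lies in $\C$ by the inductive hypothesis applied to the presheaf $a$; hence $\coprod\bord a\in\C$ by closure under coproducts, while $\coprod a\in\C$ because $\C$ contains the representables, and $\mathrm{Sk}_{n-1}X\in\C$ by the inductive hypothesis applied to $X$. Axiom (b) applied to the pushout square above then yields $\mathrm{Sk}_nX\in\C$. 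Finally $X=\varinjlim_{n\in\nat}\mathrm{Sk}_nX$ is a countable composition of monomorphisms between objects of $\C$, so $X\in\C$ by axiom (c). The whole difficulty is thus concentrated in the cited elegance-controlled skeletal pushout; the remainder is routine bookkeeping with the saturation axioms (and in fact only (a), (b) and (c) are needed).
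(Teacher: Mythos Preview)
Your argument is correct and is a perfectly good proof of the proposition, but it is organised rather differently from the paper's. The paper does not filter a general $X$ by skeleta. Instead it first observes that, since the inclusions $\bord a\to a$ generate the class of monomorphisms, every presheaf is a transfinite composition of pushouts of such inclusions, so it suffices to prove that each $\bord a$ lies in $\C$. It then proves $\bord a\in\C$ by induction on the degree of $a$, but via a Zorn's lemma argument: one takes a maximal proper subobject $K\subset a$ lying in $\C$ and shows $K=\bord a$ by attaching one missing non-degenerate cell at a time using (b), the inductive hypothesis supplying $\bord b\in\C$ for the attached cell $b$.

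Your approach trades the Zorn step for the explicit skeletal pushout of Bergner--Rezk and the preliminary observation that $\C$ is closed under coproducts; this lets you attach all cells of a given degree simultaneously and run a clean induction over $\nat$. The paper's approach avoids the coproduct digression and the appeal to the global pushout square, but pays with a transfinite (Zorn) argument inside each $\bord a$. Both routes spend the elegance hypothesis at the same place (controlling how cells attach along their boundaries) and both use only axioms (a), (b), (c), as you note. One small caution: your identification $\bord a=\mathrm{Sk}_{n-1}(a)$ is exactly what you need, but you might want to justify it explicitly for elegant Reedy categories rather than only for Eilenberg--Zilber ones; in any case your induction really only uses that the object appearing in the upper-left corner of the pushout is an $(n{-}1)$-skeleton, which is covered by your inductive hypothesis regardless of how it is named.
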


\begin{proof}
As the boundary inclusions $\bord a\to a$ form a generating family for the
class of monomorphisms, it is sufficient to prove that the
boundaries $\bord a$ belong to $\C$ for any representable presheaf $a$.
We proceed by induction on the dimension $d$ of $a$. If $d\leq 0$, then the only
proper subobject of $a$ is the empty presheaf, which belongs to $\C$ by definition.
If $d>0$, consider the set $E$ of proper subobjects $K$ of $a$ which are in $\C$.
It is clear that $E$ is non-empty because the empty subobject of $a$ is an element of $E$.
Note that a subobject $K$ of $a$ is proper if and only if the identity of $a$ is not
contained in the set of non-degenerate sections of $K$. Therefore, proper subobjects
are stable by arbitrary unions in $a$.
Since any totally ordered set has a cofinal well ordered subset, by
Zorn's lemma, there exists a maximal element $K$ in $E$.
Let us prove that $K=\bord a$. If not, then let us choose a section $u:b\to \bord a$ of $\bord a$
which does not belong to $K$. We may choose $u$ such that the dimension of $b$ is minimal with
respect to this property. Thus $u$ must be non-degenerate, and the boundary $\bord u:\bord b\to \bord a$
must factor through $K$. We then have a pushout square of the following form, where $L\subset \bord a$
denotes the union of $K$ and of the image of $u$ in $\bord a$.
$$\xymatrix{
\bord b\ar[d]\ar[r]^{\bord u}&K\ar[d]\\
b\ar[r]^u&L
}$$
By induction, $\bord b$ is in $\C$, and as $b$ is representable, it belongs to $\C$.
As $K$ is in $\C$ as well, $L$ must be in $\C$, which gives a contradiction.
Therefore, the boundary $\bord a=K$ is in $\C$.
\end{proof}

\begin{prop}\label{prop:elegantregular}
If $A$ is an elegant Reedy category, then
any $A$-localizer is regular. In other words, for any
model category structure on $\pref{A}$ whose cofibrations precisely
are the monomorphisms, and for any presheaf $X$ on $A$, the
family of sections of $X$ exhibit $X$ as an
homotopy colimit of representable presheaves (see \cite[D\'efinition 3.4.13]{Ci3}).
\end{prop}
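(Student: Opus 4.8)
The plan is to apply Proposition~\ref{prop:saturatedclass} to the class $\C$ of presheaves $X$ on $A$ for which the canonical morphism $\Lcolim_{A/X}a\to X$ is a weak equivalence; here $A/X$ denotes the category of elements of $X$, the symbol $a$ stands for the tautological diagram of representable presheaves $(a,x)\mapsto a$ indexed by $A/X$, and the morphism to $X$ is the one induced by the cocone whose component at $(a,x)$ is $x\colon a\to X$. By the very definition of a regular localizer \cite[D\'efinition~3.4.13]{Ci3}, the proposition is precisely the assertion that $\C$ contains every presheaf on $A$. Since the category of elements $A/a$ has a terminal object, namely $(a,1_a)$, the inclusion of that object is homotopy final, so $\Lcolim_{A/a}a\to a$ is a weak equivalence; thus $\C$ contains the representable presheaves, and by Proposition~\ref{prop:saturatedclass} it suffices to check that $\C$ is saturated by monomorphisms.

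Two elementary facts underlie the verification of conditions (a)--(d). First, the functor $X\mapsto A/X$ commutes with all colimits (colimits of presheaves are computed pointwise, and $\ob(A/X)=\coprod_a X(a)$, with morphisms described similarly), and it transports the tautological diagrams accordingly. Second, if $i\colon X\to Y$ is a monomorphism of presheaves, the induced functor $A/X\to A/Y$ is fully faithful, injective on objects, and in fact a sieve: a morphism $f\colon(b,w)\to(a,z)$ of $A/Y$ satisfies $w=z\circ f$, so if $z$ factors through $i$ then so does $w$, whence $(b,w)$ already lies in $A/X$. Consequently $\Lcolim_{A/X}a\to\Lcolim_{A/Y}a$ is a monomorphism (it is one at each level of the bar construction computing the homotopy colimit), the homotopy colimit over a category obtained by gluing along such a sieve is the homotopy pushout of the homotopy colimits over the pieces, and the homotopy colimit over a transfinite composition of such sieve inclusions is the transfinite composition of the homotopy colimits. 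Condition (a) is trivial: the category of elements of the empty presheaf is empty, and the homotopy colimit of the empty diagram is the initial presheaf.

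Conditions (c) and (d) are then routine. For (c), writing $X=\varinjlim_i X_i$ one has $A/X=\varinjlim_i A/X_i$, a transfinite composition of sieve inclusions, so $\Lcolim_{A/X}a=\varinjlim_i\Lcolim_{A/X_i}a$ is a homotopy colimit of monomorphisms; likewise, as the $X_i\to X_j$ are monomorphisms between cofibrant objects, $X=\varinjlim_i X_i$ is a homotopy colimit of the $X_i$; since weak equivalences are stable under transfinite composition of monomorphisms, the hypothesis $X_i\in\C$ for all $i$ gives $X\in\C$. For (d), a retract $X$ of an object $Y$ of $\C$ yields a retract $A/X$ of $A/Y$ in $\cat$ compatible with the tautological diagrams, so that $\Lcolim_{A/X}a\to X$ is a retract of the weak equivalence $\Lcolim_{A/Y}a\to Y$, hence a weak equivalence.

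The heart of the argument is condition (b). Starting from a pushout square with $X$, $X'$, $Y$ in $\C$ and $i\colon X\to Y$ a monomorphism, the first fact above gives $A/Y'=A/Y\amalg_{A/X}A/X'$, and since $A/X\to A/Y$ is a sieve, $\Lcolim_{A/Y'}a$ is the homotopy pushout of $\Lcolim_{A/Y}a$ and $\Lcolim_{A/X'}a$ along $\Lcolim_{A/X}a$. On the other hand, $i$ being a cofibration and every object being cofibrant, $Y'$ is the homotopy pushout of $Y$ and $X'$ along $X$. The canonical morphisms assemble into a morphism between these two spans which is, by hypothesis, a weak equivalence at each of the three vertices; as weak equivalences are stable under homotopy pushout, $\Lcolim_{A/Y'}a\to Y'$ is a weak equivalence, i.e.\ $Y'\in\C$. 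This shows that $\C$ is saturated by monomorphisms, and Proposition~\ref{prop:saturatedclass} concludes. I expect the only delicate point to be the descent property of homotopy colimits along sieves used in (b) and (c); it can be checked directly via the bar construction---where the sieve hypothesis is exactly what makes the relevant comparison maps monomorphisms---or deduced from the results on homotopy Kan extensions in \cite{Ci3}.
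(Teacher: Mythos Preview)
Your argument is correct and follows the same route as the paper's proof, which simply cites \cite[Exemple~3.4.10, Proposition~3.4.22]{Ci3} for the fact that the class of presheaves satisfying the regularity condition is saturated by monomorphisms, and then invokes Proposition~\ref{prop:saturatedclass}. You have essentially unpacked the content of those references directly, and the bar-construction verification you sketch for the pushout step~(b) is indeed the right way to make it precise.
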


\begin{proof}
This follows right away from \cite[Exemple 3.4.10, Proposition 3.4.22]{Ci3}
and from the preceding proposition.
\end{proof}

A consequence of the preceding proposition is that, in the category
of presheaves on an elegant Reedy category, the notion of weak
equivalence is local in the following sense.

\begin{cor}\label{cor:localweakequiv}
Let $A$ be an elegant Reedy category, and assume that $\pref{A}$ is endowed with
a model category structure whose cofibrations precisely are the monomorphisms.
Consider a commutative triangle of the form
$$\xymatrix{
X\ar[rr]^f\ar[dr]_p&&Y\ar[dl]^q\\
&S&}$$
in which both $p$ and $q$ are fibrations. Then $f$ is a weak equivalence
if and only if, for any representable presheaf $a$ and any section $s:a\to S$,
the induced morphism $a\times_S X\to a\times_S Y$ is a weak equivalence.
\end{cor}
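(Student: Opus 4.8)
The plan is to prove both implications, the "only if" direction being essentially formal and the "if" direction being the substantial one, where regularity (Proposition~\ref{prop:elegantregular}) is the crucial input.

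The plan is to prove the two implications separately, the ``only if'' direction being purely formal and the ``if'' direction being the substantive one. For the ``only if'' direction I would observe that, for any section $s:a\to S$, base change gives a functor $s^{*}:\pref{A}/S\to\pref{A}/a$, $(Z\to S)\mapsto(a\times_{S}Z\to a)$, whose left adjoint $s_{!}$ is composition with $s$; since $s_{!}$ does not change underlying presheaves or underlying maps it preserves monomorphisms and weak equivalences, hence cofibrations and trivial cofibrations, so $(s_{!},s^{*})$ is a Quillen pair. Because $p$ and $q$ are fibrations, $X$ and $Y$ are fibrant in $\pref{A}/S$ and $f$ is a weak equivalence between them; by Ken Brown's lemma $s^{*}$ preserves such weak equivalences, so $a\times_{S}X\to a\times_{S}Y$ is one.

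For the ``if'' direction, I would first factor $f$ in $\pref{A}/S$ as a trivial cofibration $j:X\to Z$ followed by a fibration $\pi:Z\to Y$. As above, $X$, $Y$ and $Z$ are all fibrant in $\pref{A}/S$ (using that $p$, $q$ and $q\pi$ are fibrations), so $j$ is a weak equivalence between fibrant objects of $\pref{A}/S$ and hence $s^{*}(j)$ is a weak equivalence for every section $s:a\to S$. Since $s^{*}(f)=s^{*}(\pi)\,s^{*}(j)$ and $s^{*}(f)$ is a weak equivalence by hypothesis, two-out-of-three gives that $s^{*}(\pi):a\times_{S}Z\to a\times_{S}Y$ is a weak equivalence; being a pullback of the fibration $\pi$, it is even a trivial fibration. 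It then suffices to prove that $\pi$ is itself a trivial fibration, for then $f=\pi j$ is a composite of weak equivalences. Because $A$ is an elegant Reedy category, the boundary inclusions $\bord a\to a$ generate the class of monomorphisms (this is essentially the content of Proposition~\ref{prop:saturatedclass}), so it is enough to solve an arbitrary lifting problem
$$\xymatrix{
\bord a\ar[r]^{g}\ar[d]&Z\ar[d]^{\pi}\\
a\ar[r]^{v}&Y
}$$
Setting $s=qv:a\to S$, the map $g$ and the inclusion $\bord a\to a$ agree over $S$, hence define a map $\bord a\to a\times_{S}Z$; likewise $v$ and $1_{a}$ define the tautological section $a\to a\times_{S}Y$; these fit into a lifting problem for the trivial fibration $a\times_{S}Z\to a\times_{S}Y$ against the monomorphism $\bord a\to a$, which has a solution $a\to a\times_{S}Z$; composing it with the projection $a\times_{S}Z\to Z$ yields the required lift. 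Thus $\pi$ is a trivial fibration and $f$ is a weak equivalence.

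I expect the only delicate point to be the bookkeeping needed to see that the various squares commute over $S$ --- in particular that the partial lift $g:\bord a\to Z$ genuinely factors through $a\times_{S}Z$ and that the resulting square over $a\times_{S}Z\to a\times_{S}Y$ commutes, which comes down to the identity $\pi g=v\circ(\bord a\to a)$ already present in the original square --- together with the check that $j$ is a weak equivalence \emph{between fibrant objects of the slice}, which is precisely where the hypothesis that $p$ and $q$ are fibrations is used. The rest is a routine combination of Ken Brown's lemma, two-out-of-three, and the recognition of trivial fibrations by the right lifting property against the generating cofibrations.
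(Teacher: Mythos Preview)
Your proof is correct. The ``only if'' direction matches the paper's argument exactly (right Quillen functor plus Ken Brown).

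For the ``if'' direction, your route is genuinely different from both arguments the paper offers. The paper's primary proof invokes regularity of $A$-localizers (Proposition~\ref{prop:elegantregular}) together with an external reference \cite[Corollaire~3.4.47]{Ci3}; its alternative proof reduces to the case $S=\ast$ by replacing $A$ with $A/S$ and then shows that the class of presheaves $Z$ for which $Z\times X\to Z\times Y$ is a weak equivalence is saturated by monomorphisms, so contains the terminal object by Proposition~\ref{prop:saturatedclass}. Your argument instead factors $f=\pi j$, transports the hypothesis through $s^{*}$ to see that each $s^{*}(\pi)$ is a trivial fibration, and then checks the right lifting property of $\pi$ against the generating cofibrations $\bord a\to a$ by hand, translating each lifting problem into one for $s^{*}(\pi)$. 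This is more elementary: it avoids the external reference entirely and does not need the saturation machinery beyond the single fact that boundary inclusions generate the monomorphisms. The paper's approaches, by contrast, make the role of the ``local-to-global'' principle for homotopy colimits more transparent and connect the statement to the broader regularity framework of \cite{Ci3}.
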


\begin{proof}
This obviously is a necessary condition (because, the pullback functor along
$a\to S$ is a right Quillen functor from $\pref A /S$ to $\pref A /a$,
and thus, by Ken Brown's lemma,
preserves weak equivalences between fibrant objects).
The converse follows from the preceding proposition and from \cite[Corollaire 3.4.47]{Ci3}.
One can also give a more elementary proof using directly Proposition \ref{prop:saturatedclass}
as follows. Replacing $A$ by $A/S$, we may assume without loss of generality that
$S$ is the terminal object. The class of presheaves $Z$ such that
the induced map $Z\times X\to Z\times Y$ is a weak equivalence is
saturated and contains the representable presheaves, so that it contains
the terminal object by Proposition \ref{prop:saturatedclass}.
\end{proof}

\begin{paragr}\label{par:defEq}
Let $A$ be an elegant Reedy category, and assume that $\pref{A}$ is endowed with
a proper model category structure whose cofibrations precisely are the monomorphisms.
Given two fibrations $p:X\to S$ and $q:Y\to S$, we will write
\begin{equation}\label{eq:relinthom}
\sHom_S(X,Y)\to S
\end{equation}
for the map corresponding to the internal Hom of $\pref{A/S}$
through the equivalence $\pref{A}/S\simeq\pref{A/S}$.
In other words, given a map $T\to S$, morphisms from $T$ to
$\sHom_S(X,Y)$ over $S$ correspond bijectively to morphisms of the form
$T\times_SX\to T\times_SY$ over $T$.
Given any map $T\to S$, we have the canonical pullback square below.
\begin{equation}\label{eq:relinthomcart}
\begin{split}
\xymatrix{
\sHom_T(T\times_SX,T\times_SY)\ar[r]\ar[d]&\sHom_S(X,Y)\ar[d]\\
T\ar[r]&S}\end{split}
\end{equation}
Remark that, if $p$ and $q$ are fibrations, then the map \eqref{eq:relinthom}
is a fibration as well: as our model category structure is right proper,
the pullback functor along $p$ is a left Quillen functor from
$\pref A /S$ to $\pref A /S$, so that its right adjoint is a right Quillen
functor, hence preserves fibrant objects.
In this case, we define a subpresheaf
$\Eq_S(X,Y)\subset \sHom_S(X,Y)$
by requiring that, for any map $T\to S$, a section of $\sHom_S(X,Y)$ over $T$
factors through $\Eq_S(X,Y)$ if and only if the corresponding map
$T\times_SX\to T\times_SY$ is a weak equivalence. This actually defines
a subpresheaf of $\sHom_S(X,Y)$ over $S$ precisely because of
Corollary \ref{cor:localweakequiv}.
\end{paragr}

\begin{prop}\label{prop:Eqfib}
Under the assumptions of paragraph \ref{par:defEq},
for any fibrations $X\to S$ and $Y\to S$, the structural map
$\Eq_S(X,Y)\to S$ is a fibration.
\end{prop}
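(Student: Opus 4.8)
The plan is to verify directly that the map $\Eq_S(X,Y)\to S$ has the right lifting property with respect to every trivial cofibration $j\colon K\to L$. Since, as already recorded in paragraph \ref{par:defEq}, the map $\sHom_S(X,Y)\to S$ is a fibration, any commutative square with top edge $K\to\Eq_S(X,Y)$ and bottom edge $L\to S$ can first be solved after composing the top edge with the inclusion $\Eq_S(X,Y)\hookrightarrow\sHom_S(X,Y)$: this produces a morphism $g\colon L\to\sHom_S(X,Y)$ over $S$ whose restriction along $j$ is the given map $K\to\sHom_S(X,Y)$. So the whole task reduces to checking that $g$ factors through the subpresheaf $\Eq_S(X,Y)$; any such factorization is then the lift we want.

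Next I would translate the condition ``$g$ factors through $\Eq_S(X,Y)$'' into a statement about weak equivalences. By the defining property of $\Eq_S(X,Y)$ — which is what makes sense of it as a subpresheaf, precisely by Corollary \ref{cor:localweakequiv} — the morphism $g$ factors through $\Eq_S(X,Y)$ if and only if the corresponding morphism $f_L\colon L\times_SX\to L\times_SY$ over $L$ is a weak equivalence. Pulling back along $j$ via the base change square \eqref{eq:relinthomcart}, the morphism $f_K\colon K\times_SX\to K\times_SY$ over $K$ associated to the restriction $gj$ is the base change of $f_L$ along $j$; and $f_K$ is a weak equivalence, because $gj$ is, by construction, the composite $K\to\Eq_S(X,Y)\hookrightarrow\sHom_S(X,Y)$, which does factor through $\Eq_S(X,Y)$. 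One is thus left with the commutative square
$$\xymatrix{
K\times_SX\ar[r]^{f_K}\ar[d]&K\times_SY\ar[d]\\
L\times_SX\ar[r]^{f_L}&L\times_SY
}$$
whose two vertical maps are the pullbacks of the trivial cofibration $j$ along the fibrations $L\times_SX\to L$ and $L\times_SY\to L$ (both being pullbacks of fibrations). Since the model structure on $\pref A$ is right proper, these two vertical maps are weak equivalences, and two-out-of-three then forces $f_L$ to be a weak equivalence as well. Hence $g$ factors through $\Eq_S(X,Y)$, and the fibration property is established.

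The step I expect to be the only delicate one is the identification, in the second paragraph, of ``$g$ factors through $\Eq_S(X,Y)$'' with ``$f_L$ is a weak equivalence''. This is not a tautology, since $L$ need not be representable, so one genuinely needs that weak equivalences are detected sectionwise on fibrations over $L$ — which is exactly Corollary \ref{cor:localweakequiv} applied to the two fibrations $L\times_SX\to L$ and $L\times_SY\to L$, and is the place where elegance of $A$ (together with properness) is used. Everything else in the argument is a formal diagram chase powered by right properness.
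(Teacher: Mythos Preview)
Your proof is correct and follows exactly the same route as the paper's: lift first in $\sHom_S(X,Y)$, then use right properness on the pullback square over $j$ together with two-out-of-three to see that the lift actually lands in $\Eq_S(X,Y)$. Your explicit remark on where Corollary~\ref{cor:localweakequiv} enters (namely, to make the equivalence ``$g$ factors through $\Eq_S(X,Y)$'' $\Leftrightarrow$ ``$f_L$ is a weak equivalence'' valid for non-representable $L$) is a welcome clarification that the paper leaves implicit in the definition of $\Eq_S$.
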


\begin{proof}
Consider a commutative square of the following form
$$\xymatrix{
K\ar[r]^(.3)k\ar[d]_j&\Eq_S(X,Y)\ar[d]\\
L\ar[r]&S}$$
in which the map $j$ is a trivial cofibration.
As the structural map $\sHom_S(X,Y)\to S$ is a fibration, one can find a lift $l$
in the solid commutative square below.
$$\xymatrix{
K\ar[r]^(.3)k\ar[d]_j&\sHom_S(X,Y)\ar[d]\\
L\ar[r]\ar@{..>}[ur]^l&S}$$
The equation $lj=k$ means that we have a pullback square of the form
$$\xymatrix{
K\times_SX\ar[r]^k\ar[d]_{j\times_S 1_X}&K\times_SY\ar[d]^{j\times_S1_X}\\
L\times_SX\ar[r]^l&L\times_SY
}$$
in which the two vertical maps are weak equivalence (by right properness)
as well as $k$. Therefore, the map $l:L\to \sHom_S(X,Y)$
factors through $\Eq_S(X,Y)$, which produces a lift in the commutative square we started from.
\end{proof}

\begin{definition}
Let $A$ be a small category. A \emph{strongly proper}
model category structure on $\pref{A}$ is a proper model category structure
whose cofibrations precisely are the monomorphisms, and  such that the notion of fibration
is local over $A$ in the following sense:
any morphism of presheaves $p:X\to Y$, the following conditions are equivalent.
\begin{itemize}
\item[(i)] The map $p:X\to Y$ is a fibration.
\item[(ii)] For any representable presheaf $a$ and any section $a\to Y$, the
first projection
$a\times_Y X\to a$ is a fibration.
\end{itemize}
\end{definition}

\begin{paragr}\label{paragr:defstrongproper}
Let $A$ be a small category, and assume that $\pref{A}$ is endowed with
a strongly proper model category structure.
Consider an infinite regular cardinal $\kappa$ which is greater than the cardinal
of the set of arrows of $A$. We will use the construction of Hofmann and Streicher \cite{Str}
of a universe of fibrations with $\kappa$-small fibers.
We denote by $\mathit{Set}_\kappa$
some full subcategory of the category of sets of cardinal lesser than $\kappa$,
such that, for any cardinal $\alpha<\kappa$, there exists a
set of cardinal $\alpha$ in $\mathit{Set}_\kappa$.
Let $W$ be the presheaf whose set of sections over an object $a$ of $A$
is the set of functors $\op{(A/a)}\to \mathit{Set}_\kappa$.
Given a map $f:a\to b$ in $A$, the precomposition with the
induced functor $A/a\to A/b$ defines the corresponding map $f^*:W_b\to W_a$.
Similarly, let $\overline{W}$ be the presheaf whose set of sections
consists of couples $(X,s)$, where $X$ is
a presheaf on $A/a$ with values in $\mathit{Set}_\kappa$ (i.e. an element of $W_a$),
and $s$ is a global section of $X$. Forgetting the sections defines a mophism
of presheaves $\rho:\overline{W}\to W$. Note that, since we have
canonical equivalences $\pref{A}/a\simeq\pref{A/a}$, any element $X$ of $W_a$
determines canonically a morphism $p_X:X\to a$; in fact, one can identify
the elements of $W_a$ as the data of a presheaf $X$ on $A$ with values in $\mathit{Set}_\kappa$,
together with a map $X\to a$, as well as with specified cartesian squares
$$\xymatrix{
f^*(X)\ar[r]\ar[d]&X\ar[d]\\
b\ar[r]^f&a
}$$
for any morphism $f:b\to a$ in $A$ (with $f^*(X)$ a presheaf with
values in $\mathit{Set}_\kappa$).
We define the presheaf $U$ as the subpresheaf of $W$ whose sections over
a representable presheaf $a$ are the elements $X$ such that the corresponding
morphism $p_X:X\to a$ is a fibration. We define $\overline{U}$ by the
following pullback square.
$$\xymatrix{
\overline{U}\ar[d]_\pi\ar[r]&\overline{W}\ar[d]^\rho\\
U\ar[r]&W
}$$
The following lemma is straightforward.
\end{paragr}

\begin{lemma}
Assume that there are two cartesian squares of the following
form.
$$\xymatrix{
X\ar[r]^u\ar[d]_p&X'\ar[d]^{p'}&X\ar[r]^x\ar[d]_p&\overline{W}\ar[d]^\rho\\
Y\ar[r]^v&Y'&Y\ar[r]^y&W
}$$
If $p'$ has $\kappa$-small fibers and $v$ is a monomorphism, then
there exists a map $y':Y'\to W$ such that $y'v=y$. In particular, the case
where $Y$ is empty tells us that a morphism of presheaves over $A$ has
$\kappa$-small fibers if and only if it can be obtained as a pullback
of the morphism $\rho:\overline{W}\to W$.
\end{lemma}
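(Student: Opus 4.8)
The plan is to unwind the design of the Hofmann--Streicher universe, whose whole point is that a morphism into $W$ is the \emph{same datum} as a $\mathit{Set}_\kappa$-valued presheaf over its source. Precisely, combining the Yoneda lemma with the canonical equivalences $\pref{A}/a\simeq\pref{A/a}$, a morphism $y\colon Y\to W$ is the same thing as a functor $\op{(A/Y)}\to\mathit{Set}_\kappa$, hence as a presheaf $Z$ on $A$ with values in $\mathit{Set}_\kappa$, together with a map $Z\to Y$ and with the specified cartesian squares afforded by $\pref{A}/Y\simeq\pref{A/Y}$; and under this dictionary one has a canonical identification $y^*\overline{W}\cong Z$ over $Y$, by the very construction of $\overline{W}$. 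In particular $\rho\colon\overline{W}\to W$ has $\kappa$-small fibers: the fiber of $\rho$ over a section $X\in W_a$ is, as a presheaf on $A/a$, canonically isomorphic to $X$ itself, hence $\mathit{Set}_\kappa$-valued, hence $\kappa$-accessible since $A/a$ is $\kappa$-small and $\kappa$ is regular. As $\kappa$-smallness of fibers is manifestly stable under base change, every pullback of $\rho$ has $\kappa$-small fibers as well.

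For the converse --- which is exactly the assertion of the lemma when $Y$ is taken to be empty --- I would fix once and for all, using the axiom of choice, a normalization: to each $\kappa$-small set $S$ a set $\tau(S)\in\mathit{Set}_\kappa$ together with a bijection $\theta_S\colon S\xrightarrow{\ \sim\ }\tau(S)$, arranged so that $\tau(S)=S$ and $\theta_S=1_S$ whenever $S$ already belongs to $\mathit{Set}_\kappa$ (possible because $\mathit{Set}_\kappa$ contains a set of each cardinality $<\kappa$). Applied pointwise, $\tau$ turns any $\kappa$-accessible presheaf $Z\to T$ into an isomorphic $\mathit{Set}_\kappa$-valued one $Z^\tau\to T$, the $\theta$'s assembling into a natural isomorphism $Z\cong Z^\tau$; the classifying map $T\to W$ of $Z^\tau$ then realizes $Z\to T$ as a pullback of $\rho$.

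Now to the general statement. Both cartesian squares have the same upper-left corner $X$, which is thereby identified, over $Y$, with $y^*\overline{W}$ and with $v^*X'$; in particular $X$ is $\mathit{Set}_\kappa$-valued, being the presheaf classified by $y$. Since $v$ is a monomorphism, the full subcategory $A/Y\hookrightarrow A/Y'$ is a sieve, and, through $\pref{A}/Y'\simeq\pref{A/Y'}$, the functor $v^*$ is honest restriction of presheaves along it; hence, after a routine transport along this sieve inclusion replacing $X'$ by an isomorphic presheaf over $Y'$ whose restriction to $A/Y$ is equal to $X$, one may assume $v^*X'=X$ on the nose. Normalizing $X'$ to $X'^\tau$ as in the previous paragraph and using that $\tau$ fixes the already-$\mathit{Set}_\kappa$-valued part, we obtain a $\mathit{Set}_\kappa$-valued presheaf $X'^\tau$ over $Y'$ with $X'^\tau\cong X'$ and $v^*X'^\tau=X$. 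Its classifying map $y'\colon Y'\to W$ then satisfies $y'v=y$ by construction, and moreover $y'^*\overline{W}=X'^\tau\cong X'$, so that $p'$ is exhibited as a pullback of $\rho$; the special case $Y=\emptyset$, $X=\emptyset$ is the stated characterization of morphisms with $\kappa$-small fibers.

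The only point requiring care is strictness: one must produce $y'$ with $y'v=y$ as an honest equality of morphisms, and with honestly $\mathit{Set}_\kappa$-valued classifying data, not merely up to isomorphism. This is precisely what is bought by choosing $\tau$ to be the identity on $\mathit{Set}_\kappa$ and by the fact that restriction along the sieve $A/Y\hookrightarrow A/Y'$ is computed pointwise; granting this, everything else is bookkeeping with the equivalences $\pref{A}/a\simeq\pref{A/a}$, which is why this lemma is indeed straightforward.
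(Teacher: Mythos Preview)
Your proof is correct and complete. The paper does not give a proof of this lemma, merely declaring it ``straightforward''; what you have written is exactly the kind of careful unpacking of the Hofmann--Streicher dictionary (maps $Y\to W$ $\leftrightarrow$ $\mathit{Set}_\kappa$-valued presheaves on $A/Y$) that justifies this claim, and your handling of the strictness issue---using that $A/Y\hookrightarrow A/Y'$ is a sieve to transport $X'$ so that its restriction is literally $X$, then normalizing via a $\tau$ that fixes $\mathit{Set}_\kappa$---is the right way to obtain $y'v=y$ as an honest equality rather than up to isomorphism.
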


\begin{prop}\label{prop:classfib1}
Under the assumptions of paragraph \ref{paragr:defstrongproper},
let $p:X\to Y$ be map with $\kappa$-small fibers, and choose
a classifying cartesian square.
$$\xymatrix{
X\ar[r]\ar[d]_p&\overline{W}\ar[d]^\rho\\
Y\ar[r]^y&W
}$$
Then $p$ is a fibration if and only if the classifying map $y$
factors through $U\subset W$.
\end{prop}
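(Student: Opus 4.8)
The statement to prove is Proposition \ref{prop:classfib1}: given a map $p:X\to Y$ with $\kappa$-small fibers classified by a cartesian square over $\rho:\overline{W}\to W$, the map $p$ is a fibration if and only if the classifying map $y:Y\to W$ factors through $U$. First I would dispose of the easy direction. Suppose $y$ factors through $U$, say $y=\iota y'$ with $y':Y\to U$ and $\iota:U\to W$ the inclusion. By the definition of $\overline{U}$ as the pullback $U\times_W\overline{W}$, the composite cartesian square expressing $p$ as a pullback of $\rho$ refines to a cartesian square expressing $p$ as a pullback of $\pi:\overline{U}\to U$ along $y'$. So it suffices to show that $\pi$ is a fibration and invoke stability of fibrations under pullback. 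To see that $\pi$ is a fibration, I use the strong properness hypothesis: it is enough to check that for every representable $a$ and every section $a\to U$, the pullback $a\times_U\overline{U}\to a$ is a fibration. But a section $a\to U$ is by the very definition of $U$ an element $X\in W_a$ for which the associated map $p_X:X\to a$ is a fibration, and by construction of $\overline{W}$ (the sections-forgetting morphism $\rho$) together with the pullback defining $\overline{U}$, the map $a\times_U\overline{U}\to a$ is canonically identified with $p_X:X\to a$. Hence it is a fibration, and $\pi$ is a fibration by locality; therefore so is $p$.

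**The harder direction.** Conversely, suppose $p:X\to Y$ is a fibration; I must show $y:Y\to W$ factors through $U$. Since $U$ is a subpresheaf of $W$ and $\pref{A}$ is a presheaf topos, a map $Y\to W$ factors through $U$ if and only if, for every object $a$ of $A$ and every section $s:a\to Y$, the composite $ys:a\to W$ factors through $U$, i.e. lands in $U_a$. Now $ys\in W_a$ classifies the pullback $a\times_Y X\to a$ (this is exactly how the classifying square is built: pulling back the cartesian square along $s$ gives a cartesian square over $\rho$ with base $a$). By definition of $U_a$, saying $ys\in U_a$ is precisely saying that the map $a\times_Y X\to a$ is a fibration. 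So the whole proposition reduces to: \emph{if $p:X\to Y$ is a fibration, then for every section $s:a\to Y$ the pullback $a\times_Y X\to a$ is a fibration.} But this is immediate — it is the ``only if'' half (condition (i) $\Rightarrow$ (ii)) built into the definition of a strongly proper model category structure. (Alternatively, and without even invoking strong properness for this half, pullback along $a\to Y$ is a right Quillen functor $\pref{A}/Y\to\pref{A}/a$, hence sends the fibration $p$ to a fibration.)

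**Where the content sits.** I expect no genuine obstacle: the proposition is essentially a translation exercise that unwinds the three constructions stacked in paragraph \ref{paragr:defstrongproper} — the Hofmann–Streicher universe $\rho:\overline{W}\to W$, the subpresheaf $U$ cut out by the fibration condition, and the pullback $\overline{U}=U\times_W\overline{W}$ — against the locality characterization of fibrations encoded in the definition of strong properness. The only place where one must be slightly careful is the identification, for a section $X\in W_a$, of $a\times_U\overline{U}\to a$ with the associated map $p_X:X\to a$; this is the content of the ``straightforward'' Lemma preceding the proposition (the sections of $\overline{W}$ being pairs $(X,\text{global section of }X)$, so that the fiber of $\rho$ over $X$ is $X$ itself), and once that identification is granted both implications follow formally. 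I would therefore keep the write-up short, citing the preceding Lemma for the identification and the definition of strongly proper for the locality of fibrations.
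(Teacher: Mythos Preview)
Your proof is correct and is precisely the unpacking of the paper's one-line proof (``This is a reformulation of the last part of the definition of strong properness''). The only remark is that your labelling of the two directions is inverted: the direction you call ``harder'' is in fact the trivial one (pullback stability of fibrations), while the direction you call ``easy'' is where the nontrivial implication (ii)$\Rightarrow$(i) of strong properness is actually used---and you could have argued it more directly, without the detour through showing that $\pi$ itself is a fibration, by observing that $y$ factoring through $U$ says exactly that every $a\times_Y X\to a$ is a fibration, whence $p$ is a fibration by locality.
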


\begin{proof}
This is a reformulation of the last part of the definition of strong properness.
\end{proof}

\begin{cor}\label{cor:classfib2}
Assume that there are two cartesian squares of the following
form.
$$\xymatrix{
X\ar[r]^u\ar[d]_p&X'\ar[d]^{p'}&X\ar[r]^x\ar[d]_p&\overline{U}\ar[d]^\pi\\
Y\ar[r]^v&Y'&Y\ar[r]^y&U
}$$
If $p'$ is a fibration with $\kappa$-small fibers and if $v$ is a monomorphism, then
there exists a map $y':Y'\to U$ such that $y'v=y$.
\end{cor}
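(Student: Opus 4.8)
The plan is to transport the classifying data of $p$ into the universe $W$, extend it there by means of the lemma preceding Proposition~\ref{prop:classfib1}, and then push the extension back down into $U$ using Proposition~\ref{prop:classfib1}; the strictness of the resulting equation $y'v=y$ will come for free from the fact that $U$ is a subpresheaf of $W$.

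Concretely, I would first compose the given cartesian square over $\pi$ (with classifying map $y\colon Y\to U$) with the pullback square defining $\overline{U}$ over $\overline{W}$. Since $\pi$ is by construction the pullback of $\rho\colon\overline{W}\to W$ along the inclusion $\iota\colon U\hookrightarrow W$, the composite square exhibits $p$ as the pullback of $\rho$ along $\iota y$, so that $\iota y\colon Y\to W$ is a classifying map for $p$. Together with the hypothesis that $p$ is the pullback of $p'$ along the monomorphism $v$ and that $p'$ has $\kappa$-small fibers, this places us exactly in the situation of the lemma stated just before Proposition~\ref{prop:classfib1}: that lemma then supplies the classifying map $\bar{y}'\colon Y'\to W$ of $p'$, which moreover satisfies $\bar{y}'v=\iota y$.

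Now $p'$ is assumed to be a fibration, so Proposition~\ref{prop:classfib1} applies to its classifying map $\bar{y}'$ and tells us that $\bar{y}'$ factors through $U\subset W$; write $\bar{y}'=\iota y'$ with $y'\colon Y'\to U$. From $\iota(y'v)=\bar{y}'v=\iota y$ and the fact that $\iota$ is a monomorphism, we conclude $y'v=y$, which is precisely what is required.

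The argument is mostly bookkeeping; the one point deserving a little care — and the step I would expect to be the main (mild) obstacle — is checking that the map produced by the preceding lemma really is the classifying map of $p'$, so that Proposition~\ref{prop:classfib1} can legitimately be invoked. This follows from the explicit description of $W$ and of the classification of $\kappa$-small fibered maps, together with the observation that the canonical pullback squares used to build classifying maps are compatible with restriction along $v$ (so that the classifying map of $p'$ restricts along $v$ to the classifying map of $p=v^{*}p'$, namely to $\iota y$).
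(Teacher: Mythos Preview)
Your argument is correct and is exactly the derivation the paper has in mind: the corollary is stated without proof precisely because it follows from the preceding (unnumbered) lemma and Proposition~\ref{prop:classfib1} in the way you describe. Your extra remark that the map $\bar y'$ produced by the lemma must itself be a classifying map for $p'$ (so that Proposition~\ref{prop:classfib1} applies) is the only point not literally contained in the lemma's statement, and you handle it appropriately.
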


\begin{thm}\label{thm:universekappa2}
Under the assumptions of paragraph \ref{paragr:defstrongproper}, if $A$
is an Eilenberg-Zilber category and if $\kappa$ is an inaccessible cardinal,
the map $\pi:\overline{U}\to U$ is a univalent fibration between fibrant objects
which classifies fibrations with $\kappa$-small fibers.
\end{thm}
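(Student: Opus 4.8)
\emph{Proof proposal.} The plan is to run Voevodsky's original argument, in the streamlined form of Kapulkin, Lumsdaine and Voevodsky \cite{KLV}: the two non-formal ingredients of that proof --- the existence of minimal fibrations and the extension property for fibre-wise weak equivalences --- are now available for $\pref{A}$ through Theorem \ref{thm:existenceminimal} and Proposition \ref{prop:extensionweakequivcof}, so what remains is to check how robustly the rest of the proof transposes to the present setting.

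I would first dispose of the formal assertions. That $\pi$ is a fibration is immediate from strong properness: any section $a\to U$ over a representable presheaf corresponds, by the very definition of $U$, to a fibration $X\to a$, and this $X$ is nothing but the pullback $a\times_U\overline{U}$; the local character of fibrations then forces $\pi$ to be a fibration. To see that $U$ is fibrant, take a trivial cofibration $j:K\to L$ and a map $K\to U$; by Proposition \ref{prop:classfib1} it classifies a fibration $X\to K$ with $\kappa$-small fibres, Proposition \ref{prop:extensionfib} extends $X$ along $j$ to a fibration $X'\to L$ whose fibres are again $\kappa$-small --- here one uses that $\kappa$ is inaccessible and $|A|<\kappa$, so that the functor $v_*$, the minimal models and the fibre products occurring in Proposition \ref{prop:extensionfib} all preserve $\kappa$-smallness --- and Corollary \ref{cor:classfib2} then upgrades the classifying map of $X'$ to one whose restriction to $K$ is the given map. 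Thus $U$ is fibrant, and $\overline{U}$ is fibrant because $\pi$ is a fibration over it. Finally, $\pi$ classifies fibrations with $\kappa$-small fibres in the strict sense: by the lemma preceding Proposition \ref{prop:classfib1}, such a fibration is a strict pullback of $\rho:\overline{W}\to W$ along a classifying map which, by Proposition \ref{prop:classfib1}, factors through $U$, hence is a strict pullback of $\pi$; conversely pullbacks of $\pi$ are fibrations with $\kappa$-small fibres.

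The substance of the proof is univalence. Since the model structure is proper and $A$ is an Eilenberg-Zilber, hence elegant Reedy, category, the constructions of paragraph \ref{par:defEq} apply; the projections $p_1,p_2:U\times U\to U$ are fibrations because $U$ is fibrant, so we may form the fibration $E=\Eq_{U\times U}(p_1^{*}\overline{U},p_2^{*}\overline{U})\to U\times U$ of Proposition \ref{prop:Eqfib}, together with the ``identity equivalence'' section $\iota:U\to E$ over the diagonal. Recall that $\pi$ is univalent if and only if $\iota$ is a weak equivalence (see \cite{KLV}); as $E\xrightarrow{p_1}U$ is a fibration and $p_1\iota=1_U$, it suffices, by the two-out-of-three property, to prove that $E\to U$ is a trivial fibration. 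Since the boundary inclusions $\bord a\to a$ generate the cofibrations, this amounts to solving lifting problems against $\bord a\to a$, and such a problem unwinds, via Proposition \ref{prop:classfib1} and the descriptions of $\sHom$ and $\Eq$ from paragraph \ref{par:defEq}, to the following data: a fibration $X\to a$ with $\kappa$-small fibres, a fibration $Y\to\bord a$ with $\kappa$-small fibres, and a fibre-wise weak equivalence $e:X|_{\bord a}\to Y$ over $\bord a$; what is required is an extension of $Y$ to a fibration $Y'\to a$ with $\kappa$-small fibres, and of $e$ to a fibre-wise weak equivalence $X\to Y'$ over $a$, all of this compatible with the ambient classifying maps.

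To produce $Y'$, I would apply Proposition \ref{prop:extensionweakequivcof} to the cofibration $\bord a\to a$, with the cartesian square $X|_{\bord a}\to X$ and a two-sided homotopy inverse of $e$ as the weak equivalence to be extended; this yields a fibration $Y'\to a$ extending $Y$ and a weak equivalence $w':Y'\to X$ over $a$ restricting over $\bord a$ to that homotopy inverse, its fibres again $\kappa$-small by inaccessibility of $\kappa$. A homotopy inverse of $w'$ over $a$ is a fibre-wise weak equivalence $X\to Y'$ whose restriction to $\bord a$ is homotopic to $e$; since $\sHom_a(X,Y')\to a$ is a fibration and $I$ is an interval, a path-lifting argument exactly as in Section 2 deforms it, keeping it over $a$, into one whose restriction to $\bord a$ is literally $e$. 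Corollary \ref{cor:classfib2} then provides a classifying map $a\to U$ for $Y'\to a$ whose restriction to $\bord a$ is the second component of the given section of $E$ over $\bord a$; paired with the classifying map of $X$ and with the equivalence $X\to Y'$ just constructed, this is the desired lift $a\to E$. Hence $E\to U$ is a trivial fibration, $\iota$ is a weak equivalence, and $\pi$ is univalent. The main obstacle, throughout, is strictness: classifying maps must agree on the nose rather than merely up to homotopy, and likewise the equivalence data --- this is precisely what Corollary \ref{cor:classfib2} and the path-lifting arguments take care of --- while the inaccessibility of $\kappa$ is exactly the hypothesis that keeps every extension and strictification within the world of $\kappa$-small fibres.
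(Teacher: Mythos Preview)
Your proof is correct, and the opening section --- $\pi$ is a fibration, $U$ is fibrant, $\pi$ classifies $\kappa$-small fibrations --- matches the paper's argument essentially line for line. The univalence argument, however, differs in one choice that makes your route longer than necessary.

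The paper shows that the \emph{target} projection $t:\Eq_{U\times U}(\overline{U}_0,\overline{U}_1)\to U$ is a trivial fibration, rather than the source projection you call $p_1$. With that orientation, a lifting problem against a cofibration $j:Y\to Y'$ unwinds to a weak equivalence $w:X_0\to X_1$ over $Y$ together with an extension of the \emph{target} $X_1$ to a fibration $X'_1\to Y'$ --- and this is precisely the input of Proposition~\ref{prop:extensionweakequivcof}, which then produces the extension of $X_0$ and of $w$ in one stroke. No inversion, no path-lifting: one invokes Proposition~\ref{prop:extensionweakequivcof} and Corollary~\ref{cor:classfib2}, and the lift is done.

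By working with the source projection instead, you find yourself with the source already extended over $a$ and the target to be produced, the opposite of what Proposition~\ref{prop:extensionweakequivcof} supplies. Your detour --- pass to a homotopy inverse of $e$, apply the proposition, take a homotopy inverse of the resulting $w'$, then strictify the restriction via path-lifting --- does go through (the strictification step is a genuine lifting problem of the trivial cofibration $I\times X|_{\bord a}\cup\{0\}\times X\hookrightarrow I\times X$ against the fibration $Y'\to a$, and the resulting map is a weak equivalence because it is $I$-homotopic over $a$ to one). But it costs you two homotopy inversions and a homotopy-lifting argument that the paper's orientation simply sidesteps.

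So nothing is wrong; the moral is only that the asymmetry built into Proposition~\ref{prop:extensionweakequivcof} (it extends the domain of the weak equivalence, given an extension of the codomain) dictates which projection of $\Eq$ one should prove to be a trivial fibration, and choosing the target projection collapses the argument to a single step.
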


\begin{proof}
The fact that $U$ is fibrant is a reformulation of proposition \ref{prop:extensionfib}
and of Corollary \ref{cor:classfib2}. The fact that $\pi$ is a fibration
follows straight away from Proposition \ref{prop:classfib1}.
Let $\pi_0:\overline{U}_0=\overline{U}\times U\to U\times U$ and
$\pi_1:\overline{U}_1=U\times\overline{U}\to U\times U$ be the pullbacks
of the fibration $\pi$ along the first and second projection of $U\times U$ to $U$,
respectively. By virtue of Proposition \ref{prop:Eqfib}, we have a canonical
fibration
$$(s,t):\Eq_{U\times U}(\overline{U}_0,\overline{U}_1)\to U\times U\, .$$
As the pullback of the fibration $\pi_i$ along the diagonal $U\to U\times U$
is canonically isomorphic to $\pi$ for $i=0,1$, the fibration $(s,t)$
has a canonical section over the diagonal $U\to U\times U$, which provides
a morphism
$$\mathit{id}:U\to \Eq_{U\times U}(\overline{U}_0,\overline{U}_1)$$
such that $(s,t)\mathit{id}$ is the diagonal (or equivalently, such that
$s\mathit{id}=t\mathit{id}=1_U$).
The property that $\pi$ is univalent means that this map $\mathit{id}$
is a weak equivalence. It is thus sufficient to prove that the fibration
$$t:\Eq_{U\times U}(\overline{U}_0,\overline{U}_1)\to U$$
is a trivial fibration.
Consider a cofibration $j:Y\to Y'$. Then a commutative square
$$\xymatrix{
Y\ar[r]^(.3)\xi\ar[d]_j&\Eq_{U\times U}(\overline{U}_0,\overline{U}_1)\ar[d]^t\\
Y'\ar[r]^{\xi'}&U
}$$
consists essentially of a commutative diagram of the form
$$\xymatrix{
X_0\ar[r]^w\ar[dr]_{p_0}&X_1\ar[r]^{i_1}\ar[d]^{p_1}&X'_1\ar[d]^{p'_1}\\
&Y\ar[r]^j&Y'
}$$
in which $p_0$, $p_1$ and $p'_1$ are fibrations (with $\kappa$-small fibers), $w$ is a weak
equivalence, and the square is cartesian (where the triple $(p_0,w,p_1)$ correponds to $\xi$, the fibration
$p'_1$ corresponds to $\xi'$, and the cartesian square to the
equation $\xi'j=t\xi$). Therefore, Proposition \ref{prop:extensionweakequivcof}
together with Corollary \ref{cor:classfib2} give a map $\zeta:Y'\to \Eq_{U\times U}(\overline{U}_0,\overline{U}_1)$
such that $t\zeta=\xi'$ and $\zeta j=\xi$.
\end{proof}

\begin{prop}\label{prop:stronglyproperinjectivemod}
Let $A$ and $B$ be a small categories, with $B$ having the structure
of an elegant Reedy category.
Assume that $\pref{A}$ is endowed with a strongly proper model
category structure, and consider the associated injective
model category structure on the category of presheaves on $B$ with values in $\pref{A}$ (for which
cofibrations and weak equivalences are defined termwise with respect to evaluation
at objects of $B$). Then this defines a strongly proper model category structure
on $\pref{A\times B}$. In particular, this model category structure
supports a model of intensional type theory with dependent sums and products,
identity types, and as many univalent universes as there are inaccessible cardinals
greater than the set of arrows of $A\times B$.
\end{prop}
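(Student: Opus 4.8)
The plan is to verify, in turn, the three conditions defining a strongly proper model structure for the injective model structure on $\pref{A\times B}\simeq\mathrm{Fun}(\op{B},\pref A)$ (presheaves on $B$ with values in $\pref A$), and then to read off the type-theoretic conclusions from the results proved above. First, since $\pref A$ is combinatorial, the injective model structure on $\mathrm{Fun}(\op B,\pref A)$ exists; and because $B$ is an elegant Reedy category while the cofibrations of $\pref A$ are exactly the monomorphisms, the injective and the Reedy model structures on $\mathrm{Fun}(\op B,\pref A)$ coincide, their common class of cofibrations being precisely the class of monomorphisms of $\pref{A\times B}$ (this is how the elegance of $B$ intervenes, in the spirit of Bergner and Rezk \cite{BeRe}: the only input is that Reedy and levelwise cofibrations agree over an elegant Reedy category whenever the cofibrations of the target are the monomorphisms). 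Left properness of $\pref{A\times B}$ is then immediate, pushouts being computed levelwise over $B$, weak equivalences and cofibrations being detected levelwise over $B$, and $\pref A$ being left proper with monomorphic cofibrations. For right properness one observes that for each object $b$ of $B$ the evaluation functor $\mathrm{ev}_b\colon\mathrm{Fun}(\op B,\pref A)\to\pref A$ is right Quillen --- its left adjoint sends a map of $\pref A$ to a coproduct of copies of that map, and a coproduct of cofibrations (resp.\ of trivial cofibrations) of $\pref A$ is again one, since cofibrations are monomorphisms --- so that injective fibrations are in particular levelwise fibrations, whence right properness of $\pref{A\times B}$ follows from right properness of $\pref A$.

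The substantial point is the locality of fibrations over $A\times B$. One implication is formal: fibrations being stable under pullback, if $p\colon X\to Y$ is an injective fibration then $c\times_Y X\to c$ is a fibration for every representable presheaf $c=a\times b$ and every section $c\to Y$. For the converse I would combine the Reedy description of injective fibrations over the elegant Reedy category $B$ with the locality of fibrations in the base $\pref A$. By the coincidence of the injective and Reedy structures, $p\colon X\to Y$ is an injective fibration if and only if, for each object $b$ of $B$, the relative matching map $X_b\to Y_b\times_{M_bY}M_bX$ (where $M_b$ denotes the matching object over $B$) is a fibration of $\pref A$; and since $\pref A$ is strongly proper, the latter holds if and only if every pullback of that relative matching map along a section $t\colon a\to Y_b\times_{M_bY}M_bX$, with $a$ representable, is a fibration of $\pref A$. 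Now the datum of such a $t$ amounts precisely to a section $s\colon c=a\times b\to Y$ together with a lift of $s$ along the ``$B$-boundary'' inclusion $a\times\bord b\hookrightarrow c$; and, under the identification $\pref{A\times B}/c\simeq\pref{(A/a)\times(B/b)}$ --- in which $B/b$ is again an elegant Reedy category and the induced model structure on $\pref{A}/a\simeq\pref{A/a}$ is again strongly proper --- the corresponding pullback of the relative matching map of $p$ at $b$ is identified with the relative matching map, at the top object $\mathrm{id}_b$ of $B/b$, of the fibration $c\times_Y X\to c$ supplied by the hypothesis. That last relative matching map is a fibration of $\pref{A/a}$, hence of $\pref A$; so every pullback of every $X_b\to Y_b\times_{M_bY}M_bX$ along a section over a representable is a fibration, whence $p$ is an injective fibration, and the injective model structure on $\pref{A\times B}$ is strongly proper. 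Carrying out this identification --- aligning the $B$-Reedy relative matching maps of $p$, pulled back along sections over representables of $A$, with the relative matching maps of the slice fibrations $c\times_Y X\to c$, that is, controlling the interplay between the Reedy structure of $B$ and the locality of fibrations in $\pref A$ at the level of the representables of $A\times B$ --- is the step I expect to be the main obstacle; the rest is bookkeeping.

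Finally, the model structure on $\pref{A\times B}$ thus obtained is strongly proper, right proper, combinatorial, and has the monomorphisms as cofibrations, hence is a type-theoretic model category in the sense of Shulman \cite{ShInv}; by \cite{KLV} and \cite{LuWa} it therefore interprets Martin-L\"of intensional type theory with dependent sums and products and identity types. For the univalent universes one observes that a finite product of Eilenberg-Zilber categories is again an Eilenberg-Zilber category --- conditions (EZ1) and (EZ2) pass to products, the set of sections of a map $(f,g)$ of $(A\times B)_-$ being the product of the sets of sections of $f$ and of $g$ --- so that, as soon as $A$ is taken to be an Eilenberg-Zilber category, $A\times B$ is one as well, and Theorem \ref{thm:universekappa2} applied to the strongly proper model structure just constructed provides, for every inaccessible cardinal $\kappa$ greater than the cardinal of the set of arrows of $A\times B$, a univalent universe classifying the fibrations with $\kappa$-small fibers. (Keeping only the hypothesis that $B$ is elegant Reedy, such a universe may alternatively be produced along the lines of the proof of Proposition \ref{prop:locallyconstantuniverse}, the underlying $\infty$-category of the injective model structure being the functor $\infty$-category $\mathrm{Fun}(\mathrm{N}(\op B),\X)$ with $\X$ the underlying $\infty$-category of $\pref A$, which is an $\infty$-topos as soon as $\X$ is.)
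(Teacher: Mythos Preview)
Your overall architecture coincides with the paper's: both use that injective $=$ Reedy over an elegant $B$, both reduce the locality of fibrations to showing each relative matching map $q_b:X_b\to Y_b\times_{M_bY}M_bX$ is a fibration in $\pref A$, and both invoke strong properness of $\pref A$ to test this over representables $a$. Your treatment of properness is more explicit than the paper's (which leaves it implicit), and your discussion of the univalent universes---flagging that Theorem~\ref{thm:universekappa2} needs an Eilenberg--Zilber hypothesis on $A\times B$ not present in the statement---is a fair reading of what the ``in particular'' clause actually requires.

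The genuine divergence is in how the local test is discharged. The paper does not try to identify the pullback of $q_b$ with anything in a slice; instead it transposes the lifting problem directly. A lifting square for a trivial cofibration $j:K\to L$ against $q_b$, with $L\to a\to Y_b\times_{M_bY}M_bX$, is rewritten via the adjunction between $(-)\boxtimes b$ and evaluation at $b$ as a lifting square for the pushout--product $L\boxtimes\bord b\cup K\boxtimes b\to L\boxtimes b$ against $p$, factoring through $c=a\boxtimes b\to Y$; one then lifts against the fibration $c\times_YX\to c$ supplied by the hypothesis. This is shorter and entirely elementary. Your route via the slice $\pref{(A/a)\times(B/b)}$ is workable, but the identification you claim is not quite correct as stated: the data of $t$ is a section $s:c\to Y$ \emph{together with} a chosen boundary lift $\sigma:a\times\bord b\to X$, and the pullback of $q_b$ along $t$ is not the relative matching map of $c\times_YX\to c$ at $\mathrm{id}_b$ itself, but rather the fiber of that map over the section of $M_{\mathrm{id}_b}(c\times_YX)$ determined by $\sigma$ (the target $M_{\mathrm{id}_b}(c\times_YX)$ is not terminal in general). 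Since fibrations are stable under pullback this does not damage the argument---once $c\times_YX\to c$ is a Reedy fibration over $B/b$, its relative matching map at $\mathrm{id}_b$ is a fibration and so is any fiber of it---but you should state it this way. The paper's box-product transposition sidesteps this bookkeeping entirely.
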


\begin{proof}
Given a presheaf $X$ on $C=A\times B$ and a presheaf $F$ on $B$, we obtain
a presheaf $\Hom_{\pref B}(F,X)$ on $A$ whose sections over an object $a$
are given by
$$\Hom_{\pref B}(F,X)_a=\Hom_{\pref B}(F,X_a)$$
where $X_a$ is the presheaf on $B$ obtained by evaluating $X$ at $a$.
Given an object $b$ of $B$, the Yoneda lemma for presheaves over $B$
gives the identification
$$X_b=\Hom_{\pref B}(b,X)\, ,$$
and we set
$$M_bX=\Hom_{\pref B}(\bord b,X)\, .$$
The injective model category structure on the category of presheaves on $B$
with values in $\pref A$ coincides with the Reedy model structure.
This means that a morphism of presheaves $p:X\to Y$ on $C$ is
a fibration if and only if, for any object $b$ of $B$, the induced map
$$q_b: X_b\to Y_b\times_{M_bY}M_bX$$
is a fibration of $\pref{A}$.
Consider a map $p:X\to Y$ such that, for any representable presheaf $c$ on $C$
and any section $c\to Y$, the canonical map $c\times_Y X\to c$ is a fibration.
Let $b$ be an object of $B$. We want to prove that $q_b$ is a fibration of $\pref{A}$.
But the model category structure on $\pref{A}$ being strongly proper, it
sufficient to prove the existence of lifts in commutative diagrams of the from
$$\xymatrix{
K\ar[rr]\ar[d]_j&&X_b\ar[d]^{q_b}\\
L\ar@{..>}[urr]\ar[r]&a\ar[r]&Y_b\times_{M_bY}M_bX
}$$
in which $j$ is a trivial cofibration and $a$ is a representable presheaf on $A$.
Such a lifting problem is equivalent to a lifting problem of the form
$$\xymatrix{
L\boxtimes\bord b\cup K\boxtimes b\ar[d]\ar[rr]&&X\ar[d]^p\\
L\boxtimes b\ar@{..>}[urr]\ar[r]&a\boxtimes b\ar[r]&Y}$$
in which, for any presheaves $E$ and $F$ on $A$ and $B$ respectively,
we write $ E\boxtimes F$ for the cartesian product of the pullbacks
of $E$ and $F$ along the projections $A\times B\to A$ and $A\times B\to B$
respectively. But $c=a\boxtimes b$ is then a representable presheaf on $C$,
and we are reduced to a lifting problem of the following form.
$$\xymatrix{
L\boxtimes\bord b\cup K\boxtimes b\ar[d]\ar[r]&c\times_YX\ar[d]\\
L\boxtimes b\ar@{..>}[ur]\ar[r]&c}$$
As the projection $c\times_Y X\to c$ is a fibration, this achieves the proof.
\end{proof}

Examples of Eilenberg-Zilber test categories are
the simplicial category $\cats$, the cubical category
$\Cube$ \eqref{example:cube},
the cubical category with connections $\Cube^c$ \eqref{example:cubical},
and Joyal's categories $\Theta_n$ for $1\leq n\leq \omega$ \eqref{example:joyal}.
Once we are here, we can get a much more explicit proof
of Corollary \ref{cor:elegantunivalence}, at least in the case of
Eilenberg-Zilber local test categories: we apply
Theorem \ref{thm:universekappa2} to the Grothendieck model structure
on the category of presheaves on an Eilenberg-Zilber
local test category (see \ref{par:test}),
which is meaningful for we have the following result.

\begin{thm}\label{thm:testfibrationlocal}
Let $A$ be an elegant local test category.
The Grothendieck model category structure on $\pref{A}$
is strongly proper.
\end{thm}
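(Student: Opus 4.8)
The plan is to establish strong properness by checking its two constituents separately: properness, which is already recorded in paragraph \ref{par:test} (\cite[Corollary 4.2.18 and Theorem 4.4.30]{Ci3}), and the locality of fibrations over $A$ --- the assertion that a morphism $p\colon X\to Y$ of presheaves is a fibration as soon as each of its pullbacks along a section $a\to Y$ (with $a$ representable) is a fibration. Only this implication requires an argument. The converse is formal: for a section $v\colon a\to Y$, the pullback functor $v^{*}\colon\pref{A}/Y\to\pref{A}/a$ has for left adjoint the functor $v_{!}$ of post-composition with $v$, which preserves cofibrations (monomorphisms) and weak equivalences because it does not alter underlying presheaves; hence $v^{*}$ is a right Quillen functor, and if $p$ is a fibration it carries the fibrant object $X$ of $\pref{A}/Y$ to the fibrant object $a\times_{Y}X$ of $\pref{A}/a$, i.e. $a\times_{Y}X\to a$ is a fibration.

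For the substantial implication I would invoke the description of the trivial cofibrations of the Grothendieck model structure coming from Cisinski's theory of anodyne extensions over a local test category. The crucial point is that, for such an $A$, this model structure is cofibrantly generated by a set $\Lambda_{A}$ of trivial cofibrations \emph{whose codomains are representable presheaves}: in the examples of interest this is explicit --- the open box inclusions $\ccornet^{i,e}_{n}\to\cube_{n}$ for $\Cube$ and for $\Cube^{c}$ (Examples \ref{example:cube} and \ref{example:cubical}, together with \cite[Theorem 8.4.38]{Ci3}), the horn inclusions for $\cats$, and their analogues for $\Theta_{n}$ --- while in general it is read off from the analysis of anodyne extensions for local test categories in \cite{Ci3}. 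Granting such a $\Lambda_{A}$, the rest is a diagram chase. A map $p\colon X\to Y$ is a fibration if and only if it has the right lifting property with respect to $\Lambda_{A}$; so suppose $p$ restricts to a fibration over every representable, and consider a commutative square with some $j\colon K\to a$ in $\Lambda_{A}$ on the left and $p$ on the right. Its bottom edge $v\colon a\to Y$ is a section, so $q\colon a\times_{Y}X\to a$ is a fibration; the top edge $K\to X$ together with $j$ induces a map $K\to a\times_{Y}X$ over $a$; since $j$ is a trivial cofibration and $q$ a fibration, there is a lift $a\to a\times_{Y}X$, and composing it with the projection to $X$ provides the desired lift in the original square. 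Hence $p$ is a fibration, and together with properness this proves that the model structure is strongly proper.

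The main obstacle is the structural input of the second paragraph: the existence of a generating set $\Lambda_{A}$ of trivial cofibrations with representable codomains. This is exactly the point where the hypothesis that $A$ is a \emph{local test category}, and not merely an elegant Reedy category, is used in an essential way; it amounts to the ``Kan fibrations $=$ horn fillings'' phenomenon for $\pref{A}$, which for $\cats$, $\Cube$, $\Cube^{c}$ and $\Theta_{n}$ is recorded in \cite{Ci3}, and which in general rests on Cisinski's construction of the Grothendieck model structure from an elementary homotopical datum together with his characterization of its anodyne extensions. One could instead try to deduce the locality of fibrations more directly from Corollary \ref{cor:localweakequiv}, the regularity of $A$-localizers (Proposition \ref{prop:elegantregular}) and right properness, but the transfinite pushout steps in such an approach are delicate, and routing the argument through a generating set with representable codomains is cleaner. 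Once that input is secured, nothing more is needed beyond the two-line lifting argument above.
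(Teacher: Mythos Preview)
Your argument has a genuine gap at exactly the point you flag as the ``main obstacle'': the existence, for a general elegant local test category $A$, of a generating set $\Lambda_A$ of trivial cofibrations for the Grothendieck model structure whose codomains are representable. This is \emph{not} a consequence of the general theory of anodyne extensions in \cite{Ci3}. The description there (see \cite[1.3.36]{Ci3}) produces generators of the form $I\times\bord a\cup\{\varepsilon\}\times a\to I\times a$ for the \emph{minimal} model structure, whose codomains $I\times a$ are typically not representable; and for the Grothendieck structure, which is a nontrivial left Bousfield localization of the minimal one, no such generating set is known in general. Indeed, the remark immediately following Theorem~\ref{thm:testfibrationlocal} in the paper states explicitly that the author does not know whether such a $\Lambda_A$ exists for an arbitrary elegant local test category, and even proposes $\Omega/E_\infty$ as a candidate counter-example. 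So your appeal to \cite{Ci3} for this input is unjustified, and the argument as written only covers the explicit examples ($\cats$, $\Cube$, $\Cube^c$, $\Theta_n$) where such generators are known by hand.

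The paper circumvents this difficulty by a two-stage argument that never requires generators with representable codomains for the Grothendieck structure itself. After reducing to $Y$ terminal (replacing $A$ by $A/Y$), one first shows $X$ is fibrant for the \emph{minimal} model structure: the generating trivial cofibrations have codomain $I\times a$, which, while not representable, admits the projection $I\times a\to a$; so a lifting problem against $X\to e$ is rewritten as a lifting problem against the fibration $a\times X\to a$ over $a$, exactly as in your diagram chase. Then one uses that the Grothendieck structure is the left Bousfield localization of the minimal one by maps between representables (Proposition~\ref{prop:elegantregular} and \cite[6.4.26]{Ci3}): $X$ is Grothendieck-fibrant iff it is minimal-fibrant and local, and locality with respect to $u:a\to b$ is checked by identifying $\mathrm{Map}(b,X)\to\mathrm{Map}(a,X)$ with $\mathrm{Map}_b(b,b\times X)\to\mathrm{Map}_b(a,b\times X)$, which is a weak equivalence because $b\times X\to b$ is a Grothendieck fibration by hypothesis. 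This is the substitute for the missing $\Lambda_A$.
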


\begin{proof}
We already know that the Grothendieck model category structure is proper
(this does not use the property that $A$ is an elegant Reedy category and is
true for any local test category). Let $p:X\to Y$ be morphism such that,
for any section $a\to Y$, the induced map $a\times_Y X\to a$ is a fibration.
We want to prove that $p$ is a fibration.
Note that $A/Y$ is a again an elegant local test category and that,
under the identification $\pref{A}/Y\simeq\pref{A/Y}$, the
Grothendieck model structure on $\pref{A/Y}$ coincides with the
model category structure on $\pref{A}/Y$ induced by the Grothendieck model
structure on $\pref{A}$.
Therefore, replacing $A$ by $A/Y$, we may assume that $Y$ is the terminal object.
We thus have a presheaf $X$ on $A$ such that $a\times X\to a$ is a fibration
for any representable presheaf $a$, and we want to prove that $X$ is fibrant.
We will consider the minimal model structure on $\pref{A}$ (corresponding
to the minimal $A$-localizer; see \cite[Th\'eor\`eme 1.4.3]{Ci3}), and will prove first
that $X$ is fibrant for the minimal model structure.
Let us choose an interval $I$ such that the projection $Z\times I\to Z$ belongs to the
minimal $A$-localizer (e.g. $I$ might be the subobject classifier; see \cite[1.3.9]{Ci3}).
By virtue of \cite[Remarque 1.3.15, Proposition 1.3.36]{Ci3}, we have to check that the map from $X$ to the
terminal presheaf has the right lifting property with respect to the inclusions
of the form $I\times\bord a\cup\{\varepsilon\}\times a\to I\times a$ for any
representable presheaf $a$ and $\varepsilon=0,1$.
But lifting problems of shape
$$\xymatrix{
I\times\bord a\cup\{\varepsilon\}\times a\ar[r]^(.7)u\ar[d]&X\\
I\times a\ar@{..>}[ur]^v&
}$$
are in bijection with lifting problems of the form
$$\xymatrix{
I\times\bord a\cup\{\varepsilon\}\times a\ar[r]^(.6){(p,u)}\ar[d]&a\times X\ar[d]\\
I\times a\ar@{..>}[ur]^(.45){(pr_2,v)}\ar[r]^(.6){pr_2}&a
}$$
where $p:I\times\bord a\cup\{\varepsilon\}\times a\to a$
is the restriction of the second projection $I\times a\to a$. Hence $X$
is fibrant for the minimal model structure (because the projection
being a fibration for the Grothendieck model structure,
it is also a fibration for the minimal model structure).
By virtue of Proposition \ref{prop:elegantregular} and \cite[Proposition 6.4.26]{Ci3},
the Grothendieck model category structure is the left Bousfield localization
of the minimal model category structure on $\pref{A}$ by the set of maps
between representable presheaves. It is thus sufficient to prove that, for any
map between representable presheaves $u:a\to b$, the map
$$\mathrm{Map}(b,X)\to\mathrm{Map}(a,X)$$
is a weak equivalence (where the mapping spaces are constructed
from the minimal model structure). The latter is equivalent to the map
$$\mathrm{Map}_b(b,b\times X)\to\mathrm{Map}_b(a,b\times X)$$
where $\mathrm{Map}_b$ denotes the mapping space with respect to the
model category structure on $\pref{A}/b$ induced by the minimal model structure
on $\pref{A}$. The projection from $b\times X$ to $b$ being a fibration
of the Grothendieck model category structure, we deduce that
$X$ is local with respect to the left Bousfield localization by the
maps between representable presheaves, and thus that $X$ is fibrant
in the Grothendieck model category structure.
\end{proof}

\begin{rem}
The preceding theorem, together with Proposition \ref{prop:stronglyproperinjectivemod},
gives a new proof, in the case of Eilenberg-Zilber categories,
of Shulman's result that the injective model structure
for simplicial presheaves supports
a model of intensional type theory with univalent universes \cite[Theorem 5.6]{Sh}.
\end{rem}

\begin{rem}
The proof of Theorem \ref{thm:testfibrationlocal} would have been much easier if we would
have exhibited a generating set of trivial cofibrations of the Grothendieck
model category structure of the form $K\to a$ with $a$ representable.
This happens in practice (e.g. horn inclusions for simplicial sets, open boxes
for cubical sets), but I don't know if this is true for a general
elegant local test category. In fact, there is a candidate for a counter-example.
Let $\Omega$ be the category of finite rooted trees considered by Weiss
and Moerdijk for their notion of dendroidal sets. In a short note in preparation
(in collaboration with D.~Ara and I.~Moerdijk), it will be shown that $\Omega$
is a test category. Although $\Omega$ is not an elegant Reedy category for the
simple reason that it is not a Reedy category, for any normal dendroidal set $X$
(i.e. such that, for any tree $T$, the automorphisms of $T$ act freely on the
set of sections of $X$ over $T$), the category $\Omega/X$ is an Eilenberg-Zilber
Reedy category. As a consequence, given any reasonable model of the operad $E_\infty$
(that is any weakly contractible normal dendroidal set), the category $\Omega/E_\infty$
is an Eilenberg-Zilber test category. On the other hand the dendroidal horns are well understood:
the right lifting property with respect to dendroidal
inner horns define the $\infty$-operads, which are models for topological (coloured)
symmetric operads. But if we consider the right lifting property
with respect to all dendroidal horns, Ba\v{s}i\'c and Nikolaus~\cite{BaNa}
have shown that we obtain models of infinite loop spaces.
This means that we have an Eilenberg-Zilber test category $\Omega/E_\infty$
for which there really is no natural candidate for a generating family of
trivial cofibrations with representable codomains.
\end{rem}
%

\nocite{JaCub,GZ}
\bibliography{HoTT}
\bibliographystyle{smfalpha}
\end{document}